\documentclass[mnsc,nonblindrev]{informs4}
\OneAndAHalfSpacedXI

\usepackage{amsmath,amssymb,amsfonts}

\usepackage{natbib}
 \bibpunct[, ]{(}{)}{,}{a}{}{,}%
 \def\bibfont{\small}%

\usepackage{rotating}
\usepackage{fancyvrb}
\usepackage{enumerate}
\usepackage[shortlabels]{enumitem}
\usepackage{tabularx,booktabs,mathrsfs,setspace}%
\usepackage{mathtools}
\usepackage{multirow}
\usepackage{float}
\usepackage{pgfplots}
\pgfplotsset{compat=1.9}
\usepackage{csvsimple}
\usepackage{makecell}
\usepackage{multirow}
\usepackage{adjustbox}
\usepackage[caption=false, position=top]{subfig}
\usepackage{alphalph}

\usepackage{mathtools}
\usepackage{comment}
\usepackage{lscape}
\usepackage{longtable}
\usepackage{afterpage}

\usepackage{bbm}
\def\N{\mathbb{N}}

\def\R{\mathbb{R}}
\def\Z{\mathbb{Z}}
\def \E {\mathbb{E}}
\def \P {\mathbb{P}}

\def \I {\mathcal{I}}

\def \-> {\rightarrow}
\def \ind {\mathbf{1}}

\def \Var {\mbox{\bf Var}}

\def \LT {\Tilde{L}}
\def \cT {\Tilde{c}}

\def \ellT {\Tilde{\ell}}

\def \YT {\Tilde{Y}}

\def \xiT {\Tilde{\xi}}

\def \nuH {\hat{\nu}}

\def \thetaH {\hat{\theta}}

\def \yb {\mathbf{y}}
\def \hT {\Tilde{h}}
\def \ellT {\Tilde{\ell}}

\def \O {\mathcal{O}}
\def \Tcal {\mathcal{T}}

\def \Ib {\mathbf{I}}

\def \J {\mathcal{J}}
\def \xf {x_{\text{flat}}}

\def \F {\mathcal{F}}
\def \Ob {\mathbf{O}}
\def \xio {\xi^{\tiny{\mbox{O}}}}
\def \xia {\xi^{\tiny{\mbox{A}}}}
\def \xip {\xi^{\tiny{\mbox{P}}}}
\def \xipklast {\xi^{\tiny{\mbox{P}}}_{k,\tiny{\mbox{last}}}}
\def \xiptrunc {\xi^{\tiny \mbox{P,trunc}}}

\def \xiI {\xi^{\tiny{\mbox{IB}}}}
\def \xil {\xi^{\tiny{\mbox{L}}}}
\def \reg {\mbox{REG}}

\TheoremsNumberedThrough     % Preferred (Theorem 1, Lemma 1, Theorem 2)
\ECRepeatTheorems
\JOURNAL{Management Science}

\EquationsNumberedThrough    % Default: (1), (2), ...
\begin{document}
% Outcomment only when entries are known. Otherwise leave as is and
%   default values will be used.
%\setcounter{page}{1}
%\VOLUME{00}%
%\NO{0}%
%\MONTH{Xxxxx}% (month or a similar seasonal id)
%\YEAR{0000}% e.g., 2005
%\FIRSTPAGE{000}%
%\LASTPAGE{000}%
%\SHORTYEAR{00}% shortened year (two-digit)
%\ISSUE{0000} %
%\LONGFIRSTPAGE{0001} %
%\DOI{10.1287/xxxx.0000.0000}%

% Author's names for the running heads
% Sample depending on the number of authors;
\RUNAUTHOR{Moradi et al.}
% \RUNAUTHOR{Jones and Wilson}
% \RUNAUTHOR{Jones, Miller, and Wilson}
% \RUNAUTHOR{Jones et al.} % for four or more authors
% Enter authors following the given pattern:
%\RUNAUTHOR{}

% Title or shortened title suitable for running heads. Sample:
% \RUNTITLE{Bundling Information Goods of Decreasing Value}
% Enter the (shortened) title:
\RUNTITLE{Risk or Replace: Efficient Asymptotics for Data-Driven Maintenance}

\TITLE{Risk or Replace: Efficient Asymptotics for Data-Driven Maintenance}

% Block of authors and their affiliations starts here:
% NOTE: Authors with same affiliation, if the order of authors allows,
%   should be entered in ONE field, separated by a comma.
%   \EMAIL field can be repeated if more than one author
\ARTICLEAUTHORS{%
\AUTHOR{Poulad Moradi}
\AFF{Department of Industrial Engineering and Innovation Sciences, Eindhoven University of Technology, Eindhoven, the Netherlands, PO BOX 513, 5600 MB, \EMAIL{p.moradi.shahmansouri@tue.nl}}
\AUTHOR{Melvin Drent}
\AFF{Department of Information Systems and Operations Management, Tilburg University, the Netherlands, PO Box 90153, 5000 LE, \EMAIL{m.drent@tilburguniversity.edu}}
\AUTHOR{Joachim Arts}
\AFF{Luxembourg Centre for Logistics and Supply Chain Management, University of Luxembourg, Luxembourg City, Luxembourg, 6, rue Richard Coudenhove-Kalergi L-1359, \EMAIL{joachim.arts@uni.lu}}
\AUTHOR{Collin Drent}
\AFF{Department of Industrial Engineering and Innovation Sciences, Eindhoven University of Technology, Eindhoven, the Netherlands, PO BOX 513, 5600 MB, \EMAIL{c.drent@tue.nl}}
%,\textsuperscript{a} Second Author,\textsuperscript{b} Third Author,\textsuperscript{c} Fourth Author,\textsuperscript{c}

% \AFF{INFORMS, 5521 Research Park Drive, Suite 200, Catonsville, Maryland 21228 \EMAIL{mirko.janc@informs.org}}
% %\textsuperscript{b}School of Industrial Engineering, Good College, Collegeville, Maine 01234 \EMAIL{secauth@goodcoll.edu}; 
% %\textsuperscript{c}Their Common Affiliation \EMAIL{thauth@anywhere.edu, fourauth@anywhere.edu}

% %mirko.janc@informs.org
% \AUTHOR{Second Author}

% \AFF{School of Industrial Engineering, Good College, Collegeville, Maine 01234, \EMAIL{secauth@goodcoll.edu}}

% \AUTHOR{Third Author, Fourth Author}

% \AFF{Their Common Affiliation \{thauth@anywhere.edu, fourauth@anywhere.edu\}}
}

\ABSTRACT{%
Condition-based maintenance (CBM) is an approach that plans interventions for deteriorating systems according to their observed operational state. CBM reduces unplanned downtime and extends usable lifetime. We study a heterogeneous population of components that degrade over time according to a stochastic processes with non-negative and i.i.d. increments that are characterized by component-specific parameters that remain unobservable to the decision maker. We rely on degradation data to estimate these parameters and determine replacement actions at equidistant epochs. The goal is to minimize the long-run average cost, which incorporates fixed replacement costs, failure costs, and operating costs. This problem can be formulated as a high-dimensional partially observable Markov decision process (POMDP), which is generally intractable. We develop a tractable, data-driven CBM policy that estimates the optimal policy of a hypothetical Oracle that has full information of the underlying degradation parameters and call this policy the Estimated Oracle's Optimal Policy (EOP). We introduce a scaling regime where both the failure thresholds and cost parameters increase proportionally, reflecting practical settings in which component lifetimes and maintenance costs are large relative to  the time between two consecutive CBM decision moments. We show that the regret of the EOP, defined as the difference between its long-run average cost and that of the Oracle, converges to zero in the scaling regime when the parameter estimator is consistent. Across extensive experiments using both real and simulated data, the EOP achieves very low regret and, whenever the optimal POMDP policy can be computed exactly, a negligible optimality gap.
}

\KEYWORDS{Condition-based maintenance, Data-driven operations, Adaptive policies, Asymptotic optimality, Partially observable Markov decision processes}

\maketitle
% \begin{comment}
\section{Introduction}
\label{section:introduction}
Maintenance strategies are essential to sustain the reliable and efficient operation of critical systems that degrade over time.
Failure of critical systems can cause safety hazards and substantial economic losses.
The Guardian \citep{Guardian2025} reports that equipment malfunctions in the UK’s National Health Service (NHS) have led to almost 100 deaths and harm to nearly 4,000 individuals, between 2022 and 2025. Some of these incidents are linked to critical device failure in high-risk areas such as neonatal wards and emergency care.
They also estimate that the costs of addressing NHS maintenance problems have nearly tripled over the past decade, rising from ``\pounds 4.5 billion in 2012–2013 to \pounds 13.8 billion in 2023–2024".
In the industrial sector, unplanned downtime also imposes enormous costs.
\cite{Siemens2024} find that the 500 globally largest companies lose 11\% of their revenue on unplanned downtime, which amounts to \$1.4 trillion per year, more than the 2024 GDP of the Netherlands \citep{WBGData}.
The financial impact varies sharply across industries, from roughly \$36,000 per hour in Fast-Moving Consumer Goods to \$2.3 million per hour in the automotive sector.
Maintenance decisions affect not only the reliability and economic performance of industrial systems, but also their environmental footprint.
Components that can still function are often replaced unnecessarily.
This replacement creates extra demand for raw materials, energy, and manufacturing, and leads to avoidable waste and emissions.
The transportation of new equipment and technicians further contributes to the environmental impact.
With lifetime extension and resource efficiency recognized among the core sustainability strategies by international and national organizations \citep[see e.g.,][]{UNEP2018,IEA2019,NIST2023}, maintenance decisions informed by system condition monitoring offer a practical approach to reduce the frequency of failure while reducing the footprint of early replacements.

Condition-based maintenance (CBM) is a prominent maintenance strategy that enables organizations to track the condition of their systems and perform maintenance interventions precisely when necessary.
The advancement of sensor technologies has resulted in an increasing adoption of CBM policies to maintain critical equipment.
These technologies also provide more condition monitoring data, often in real-time, which enables and motivates the use of more complex yet better-performing data-driven maintenance decision models.
\cite{Siemens2024} estimate that full adoption of CBM by Fortune 500 companies could increase productivity by 5\% and reduce maintenance costs by 40\%, saving more than \$600 billion per year.

The growing availability of condition monitoring data also transforms CBM strategies and allows organizations to make better use of the information.
A traditional, yet simplistic, approach relies on the data to model the degradation process of an operating system and then makes CBM decisions under the assumption that all systems degrade identically \citep{ARTS2024,DEJONGE2020}.
This assumption has often been necessary because degradation data were traditionally available only during downtime.
However, recent developments in automatic real-time monitoring \citep[see e.g.][]{GEHealthcare2010,Radiologybusiness2010} allow decision makers to account for differences in degradation behaviors across individual systems.
With such data, decision makers can integrate learning directly into the CBM problem and make data-driven maintenance decisions that are optimally tailored to each system's condition and degradation trajectory.
Although this approach yields better decisions, it also introduces substantial computational complexity, making the CBM problem tractable only under restrictive modeling assumptions \citep{Drent2023,Chen2015,Elwany2011}.
Even in such simplified settings, implementation and computation of optimal policies remains a challenge for practitioners.
We therefore develop efficient, data-driven CBM policies that are computationally tractable, achieve high performance in realistic operating environments, and offer provable performance guarantees.

\textbf{Brief Model Explanation.}
In this paper, we study a critical component subject to stochastic degradation over time that must be replaced at the appropriate time to minimize long-run maintenance and operating costs.
We allow both continuous-time and discrete-time degradation models, while interventions (decisions and replacements) are restricted to evenly spaced time epochs, typically coinciding with planned site visits or shutdowns.
We consider a general class of degradation models where the total degradation sustained between any two consecutive epochs are non-negative, independent, and identically distributed, and satisfy mild technical conditions.
These classes contain many stochastic shock models with cumulative damage, such as compound Poisson processes, as well as stochastic wear processes, such as the gamma processes.
All operating components follow the same type of stochastic degradation process, but the parameters of these processes differ across components, modeling heterogeneity in the population.
For instance, when degradation is modeled as a compound Poisson shock process, both the shock arrival rate and the parameters of the compounding (damage) distribution vary from one component to another.
Furthermore, the degradation model parameters for an operating component are not directly observable and can only be inferred from real-time sensor measurements. For example, in turbomachinery, measurements of vibration, pressure, and temperature can be used to infer the underlying degradation process of a blade.
Similarly, in medical imaging, changes in electrical resistance provide information to estimate the degradation process of X-ray tube filaments.
We refer to the collection of such data as the degradation data, which contains all available relevant information about the degradation behavior of a component.
This degradation data is critical to learn about the (degradation) parameters of any individual component. 
The variation in parameters from one component to the next is modeled by a prior distribution with hyperparameters.
This assumption is common in practice as the prior distribution models variation in component characteristics that the manufacturer may provide.
%It is worth noting that our methodology can be extended to settings where components follow different types of stochastic processes.
% However, assuming a single process family better reflects practical applications, where parametric approaches are typically used to model degradation.
% This assumption also allows us to present our arguments more clearly.
A component is considered to have failed once its degradation level exceeds a failure threshold.
In our model, planned preventive interventions incur a fixed cost, whereas a component failure incurs both this cost and an additional fixed failure cost.
The model also accounts for a cumulative operating cost that reflects the system’s performance deterioration as the critical component degrades.
Specifically, the operating cost of a new component is zero, and as the component’s condition worsens, higher costs accumulate in each period.
An example of operating cost is the revenue loss resulting from reduced production due to component aging.
Another example is the increased energy expenditure resulting from the deterioration of system performance.
Such costs vary with the system state and must be accounted for when determining optimal maintenance decisions.
At the beginning of each period, the decision maker uses the available degradation data to decide whether to continue operating the component or replace it with a new one.
Our objective is to determine maintenance policies that minimize the long-run average cost-rate, conditional on the degradation data observed by the decision maker.

A similar problem, under the discounted total cost criterion and for certain degradation models, has been formulated as a multi-dimensional POMDP and solved to optimality \citep[see e.g.][]{Drent2023}.
However, POMDP optimization models become immediately intractable for many degradation models when dealing with (i) large finite state spaces, (ii) multiple unknown parameters, (iii) denumerable and continuous state spaces, due to the curse of dimensionality.
In this paper, we propose a tractable approach that can handle both discrete and continuous state spaces, accommodate a broad range of degradation models, and support high-dimensional data.
This approach uses the available data to estimate the actions of a hypothetical “\textit{Oracle}” who has full knowledge of the true parameters of the operating component’s degradation process.
We first characterize the Oracle’s optimal maintenance policy as a parameter-dependent replacement threshold.
Since computing these Oracle’s optimal thresholds poses significant challenges, we employ a renewal-theoretic asymptotic approximation instead, which yields efficient data-driven replacement policies that use parameter estimates to approximate this threshold.
For several degradation models of practical interest, these replacement policies are not only computationally tractable but also easy to implement: the data-driven replacement thresholds reduce to low-dimensional calculations and, in some cases, admit closed-form expressions that can be implemented directly in spreadsheets.
Then we introduce a scaling regime where both the failure threshold and the cost parameters increase proportionally, so that the Oracle’s optimal cost-rate converges to a strictly positive and finite limit.
In many real systems, components operate over long periods relative to the time between two consecutive CBM decision moments, and the cost of maintenance represents a notable portion of the system's running costs, conditions that motivate our proposed scaling regime.
This regime also characterizes settings where POMDP optimization faces the greatest computational challenges, as the state space becomes large.

The contributions of this paper are summarized as follows:
\begin{enumerate}
    \item We propose simple and efficient data-driven CBM policies capable of handling a wide range of degradation processes and parameter learning mechanisms.
    From a computational perspective, our approach separates parameter learning from the determination of the replacement policy. 
    % Moreover, Oracle replacement policies can be computed very efficiently for both discrete and continuous state spaces.
    %\item Our policy is highly implementable in practice. For several degradation models of practical interest, the data-driven replacement thresholds reduce to low-dimensional calculations and, in some cases, admit closed-form expressions that can be implemented directly in spreadsheets.
    \item We show that the regret of these efficient data-driven replacement policies, that is the difference between the cost-rates of our data-driven policies and the Oracle's optimal cost-rate, converges to zero in our scaling regime, when parameters are estimated using a consistent estimator.
To the best of our knowledge, this is the first strong convergence result for a data-driven CBM policy under general degradation and learning processes.
    \item We evaluate our policy against the state-of-the-art POMDP-based Integrated Bayes policy \citep{Drent2023} using real degradation data and find that our policy consistently achieves superior cost-rate performance.
    \item In an extensive simulation study, we test our data-driven policies in both discrete and continuous state spaces and observe that, in both cases, the regret of using our approach relative to the Oracle’s optimal cost-rate is small, particularly when failure thresholds are high.
    We could not statistically distinguish the cost rates of our data-driven CBM policy from those of the Integrated Bayes policies when the latter are optimal.
    \item Our policy is easily interpretable by practitioners as it relies on three intuitive steps: (i) The replacement threshold is expressed by specifying the right distance from the failure threshold, i.e., we specify a safety margin around the failure threshold such that replacement occurs when the degradation crosses this safety margin;
    (ii) we determine what the optimal safety margins are for an Oracle that can observe the degradation parameters of each component; this can be done on a spreadsheet and often in closed form; (iii) the decision maker who cannot observe the degradation parameters uses a consistent estimator of the parameter based on the available information and decides to replace or not based on the safety margin that corresponds to the current estimate of the parameter.
\end{enumerate}

The remainder of the paper is organized as follows.
Section \ref{section:literature} presents a brief review of the related literature.
We introduce our model, outline our arguments, and state the main theoretical result in Section \ref{section:Model}.
Sections \ref{section:OracleOptimal} and \ref{section:AOARP} develop our proposed adaptive policies and rigorously present the main results, which are proved through an asymptotic analysis in Section \ref{section:AsymptoticAnalysis}.
In Section \ref{section:casestudyresults}, we compare the performance of our data-driven policy with the Integrated Bayes policy using real degradation datasets, and Section \ref{section:Simulation} benchmarks our approach against the Oracle's optimal policy through extensive simulations with discrete and continuous state spaces.
Finally, we provide our concluding remarks in Section \ref{section:Conclusion}.

We finish this section with a few technical definitions.
A random variable $X$ is said to be {\em lattice} if and only if there exists a $d>0$ and such that 
$\sum_{n\in\Z}\P(X=nd)=1$ and {\em non-lattice} otherwise. For a lattice random variable, the largest $d>0$ for which $\sum_{n\in\Z}\P(X=nd)=1$ is called the period.
A lattice random variable with period $d$ and its distribution function are called {\em $d-$lattice}.\footnote{Most standard discrete distributions are $d$-lattice with span $d=1$ (e.g., Bernoulli, Binomial, Poisson, geometric, and negative binomial).}

Let $\{X_n\}$ be a sequence of random vectors in $\R^d$ and let $X$ be another random vector in $\R^d$.
We say $X_n$ converges to $X$ in probability or weakly, written $X_n \xrightarrow{p}X$, if for every $\epsilon>0$
\[
    \lim_{n \to \infty} \P\{\Vert X_n-X \Vert < \epsilon\} =1,
\]
where $\Vert \cdot \Vert$ is any norm on $\R^d$.
We say $X_n$ converges to $X$ almost surely or strongly, written $X_n \xrightarrow{a.s.}X$, if
\[
     \P \left\{\lim_{n \to \infty}\Vert X_n-X \Vert = 0 \right\} =1.
\]

% Let $\{F_n\}$ and $F$ be probability measures on the metric space $\chi$.
% We say $\{F_n\}$ converges weakly to $F$, written $\{F_n\} \xrightarrow{w}F$, if for any bounded continuous function, $g$, on $\chi$
% \[
%     \lim_{n\to \infty} \int g dF_n = \int g dF.
% \]

\section{Literature review}
\label{section:literature}

Maintenance optimization models for stochastically deteriorating systems have been extensively explored in the literature.
Comprehensive reviews of the field can be found in \cite{DEJONGE2020} and \cite{ARTS2024}.
When the stochastic process for the deteriorating system is known and the state transitions are observable, a majority of works on discrete-time CBM use either Markov decision processes (MDP) \citep[see e.g.][]{Derman1963a,Derman1963b,Kolesar1966,Ross1969,Andersen2022} or renewal theory \citep[see e.g.][]{Poppe2018,ZHANG2020} to compute optimal maintenance decisions.
These studies typically assume that all components share the same degradation process (i.e. a homogeneous population of components), this degradation process is known by decision makers, and they can perfectly observe the state of the system at any time. 
They show that, under these assumptions the optimal policy has a simple structure: a fixed threshold exists such that it is optimal to replace a component once its degradation level exceeds this threshold, and to continue operation otherwise.
Even when such simple threshold policies are not optimal, some studies focus on finding the best threshold policy, as non-threshold policies are difficult to implement in practice \citep[see e.g.][]{Feldman1986}.
However, the assumption of homogeneous component behavior seldom holds in practice, as components typically exhibit distinct degradation dynamics due to differences in their physical properties and operating conditions.
Moreover, the underlying degradation process of a component is rarely known in advance and must be inferred from condition monitoring data.
In addition, condition monitoring data can contain noise, and provide imperfect information about the component’s state.
Handling these uncertainties requires more complex decision making frameworks.

One major research stream to address these assumptions focuses on systems where low-cost, real-time condition monitoring data is available.
This stream has gained attention over the past two decades thanks to technological advancements that make such data accessible.
Studies in this area generally consider a specific stochastic process whose parameters, drawn from a prior distribution, are learned from observed condition monitoring data.
\cite{Elwany2011} consider a Brownian motion wear model with a drift parameter which is estimated from the latest degradation observation and formulates a POMDP to compute the optimal condition-based maintenance decisions.
In a similar vein, \cite{Chen2015} focus on the inverse Gaussian wear process, \cite{Zhang2016} on the gamma process, and \cite{VanOosterom2017} on a Markovian stochastic process characterized by a finite set of transition matrices.
Recently, \cite{Drent2023} study compound Poisson shock models, where both the shock arrival rate and the damage process parameters are unknown and drawn from some priors.
They establish that if the damage process belongs to a one-parameter exponential family with conjugate priors, the optimal policy can be computed using a tractable POMDP framework.
\cite{Drent2023} demonstrate that the optimal replacement policy relies not only on the observed degradation level but also on other available information, such as the number of shocks occurring between decision epochs and the component ages.

Our work advances this research stream by broadening both its scope and methodology to address a wider and more realistic class of CBM problems.
Specifically, our model accommodates a general non-decreasing independent and identically distributed degradation process with unknown parameters, which are estimated using a general consistent estimator.
To the best of our knowledge, no prior work addresses such a broad range of degradation and learning processes.
Although this problem can in principle be formulated as a POMDP, such an approach becomes computationally intractable for most realistic degradation and learning processes, particularly when component lifetimes are long.
To overcome this limitation, we propose an alternative renewal-theoretic framework and establish its asymptotically optimality when both the average component lifetime and the associated maintenance costs are large.
A further notable study in this stream, \cite{Kim2016}, formulates an optimal CBM policy designed to ensure robustness against posterior mis-specification.
It is worth noting that another research stream addresses uncertainty in observations when real-time condition monitoring data are unavailable or costly to obtain \citep{Girshick1952, Ross1971b,Maillart2006, Maillart2007, Kim2013, vanStaden2021, Khaleghei2021, Zhang2023, sun2023robust, sun2025optimal, wang2025learning}.

From an optimization perspective, this paper adopts a renewal-theoretic framework, in which the replacement of a component constitutes a renewal event, the component’s lifetime defines the renewal cycle length, and the associated maintenance cost represents the renewal cycle cost.
The use of this paradigm dates back to the classical work of \cite{Barlow1960} on age based maintenance models.
Since then, this approach has been used in some discrete-time CBM papers \citep{Kim2013, Poppe2018,ZHANG2020,vanStaden2021}.
Nevertheless, this paradigm has been less common than MDP models in discrete-time CBM research, as MDP formulations facilitate both the derivation of structural results and the computation of optimal policies.
It is noteworthy that a few other studies, such as \cite{Brenguer2003}, also use asymptotic approximations based on the renewal theory for maintenance problems.

\section{Model}
\label{section:Model}

\begin{comment}
We will consider the problem of whether or not to replace a component based on its condition. In Section \ref{subsect:degrmodelHomogeneous} we introduce the degradation model of components, then in Section \ref{subsect:optimalReplacement} we formulate the decision problem as a Markov Decision Process (MDP) under the average cost criterion and show that optimal policies can be studied through regenerative processes. 
Then in Section \ref{subsect:AsymptoticRegimeWithoutLearning} we introduce an asymptotic scaling of the replacement problem and give characterizations of the asymptotically optimal replacement policies.
In Section \ref{subsect:ProofThm1} we prove the main convergence result, i.e., Theorem \ref{thm:costconvergancenolearning}.
\end{comment}

\subsection{Degradation Process}
\label{subsect:degrmodelHeterogeneous}
We consider components that degrade with stationary non-negative independent increments. The components are indexed in the natural numbers $\N=\{1,2,\ldots\}$. A decision maker can decide to replace a component at the beginning of evenly spaced time periods.
Without loss of generality, we rescale time such that a period is one time unit.
The periods are numbered on non-negative integers $\N_0=\N\cup\{0\}$ and forward in time. 
We assume that replacement occurs instantaneously, and we define the beginning of a period $t \in \N_0$, when a decision is made, as epoch $t$.
The age of an operating component is the number of epochs elapsed since its installation.
The degradation increment between age $\tau-1\in \N_0$ and $\tau\in \N_0$ of component $i \in \N$ is denoted by $X_{i,\tau}$ so that the total degradation at age $\tau$ of component $i$ is given by
\[
S_{i,\tau} = \sum_{j=1}^\tau X_{i,j}.
\]
The empty sum is zero such that $S_{i,0}=0$, implying that a new component has no cumulative degradation; that is, the component is in an as-good-as-new state. The component $i$ fails when the degradation level, $S_{i,\tau}$, exceeds the failure level $L < \infty$.
% We denote a realization of $S_{i,\tau}$ by $s_{i,\tau}$.
For each item $i \in \N$, the sequence $\{X_{i,\tau}\}_{\tau \in\N}$ are non-negative independently and identically distributed random variables with a common probability distribution function $F_X (\cdot \mid \theta_i)$, where $\theta_i \in \Theta$ is the parameter of $F_X(\cdot \mid \theta_i)$, and $\Theta \subseteq\R^\rho, \rho \in \N$ is the parameter space.
%$F_X (\cdot,\theta_i)$ is well-defined for all $\theta_i \in \Theta$.
The parameters, $\theta_i$, vary across components with a probability distribution function $F_\theta: \Theta \to [0,1]$.
Furthermore, the degradation process of components are mutually independent, with each $\theta_i$ representing an independent realization from $F_\theta$. $F_\theta$ thus captures the heterogeneity in the population of components; if $F_\theta$ were degenerate, all components would be identical with respect to their degradation behavior.

We impose the following conditions on the distribution functions $F_\theta$ and $F_X(\cdot \mid \theta)$, $\theta \in \Theta$, and the random variable $X_{i,1}$, $i\in\N$.
\begin{enumerate}[label=A.\arabic*. , ref=A.\arabic*]
    \item $F_\theta$ is continuous with a density $f_\theta$.
    \item Fix $x\geq0$; $F_X(x \mid \theta)$ is a continuous function of $\theta$.
    \item Fix $\theta \in \Theta$; $F_X (\cdot \mid \theta)$ is continuous with a density function $f_X(\cdot \mid \theta):=dF_X (x \mid \theta)/dx$
    % such that for all $x>0$
    % \[
    %     \lim_{x^\prime \to x} \int_{0}^\infty \vert f_X(y-x \mid \theta_i) - f_X(y-x^\prime \mid \theta)   \vert dy= 0;
    % \]
    or is $d-$lattice, for some $d>0$, with a mass function $f_X(\cdot \mid \theta)$.
    \item \label{assum:FlatRigion} There exists a finite length $\xf \geq 0$ such that for all $x_1, x_2 \in \R_+, \theta \in \Theta$, if $x_2 \geq x_1 + \xf$ then $F_X(x_1\mid \theta) < F_X(x_2\mid \theta)$.
    \item \label{assum:MeanPositivity} The increments have strictly positive mean $\mu(\theta):=\E[X_{i,1} \mid \theta_i=\theta]  < \infty$ and finite variance $\sigma^2 (\theta):=\Var[X_{i,1} \mid \theta_i=\theta] < \infty$.
    \item $\mu(\theta_i)$ and $\sigma(\theta_i)$ are continuous functions of $\theta_i$.
    \item \label{assum:converge} The following integrals are finite,
    \begin{enumerate}
        \item $\displaystyle\int_\Theta \dfrac{1}{\mu(\yb)}dF_\theta(\yb)$,
        \item $\displaystyle\int_\Theta \dfrac{\sigma^2(\yb)}{\mu^2(\yb)}dF_\theta(\yb)$.
    \end{enumerate}
\end{enumerate}

The above assumptions impose mild regularity conditions that ensure analytical tractability while covering a broad range of degradation settings encountered in practice. Assumptions A.1--A.3 allow components to differ through a continuously distributed latent parameter while accommodating both continuous and lattice-valued degradation increments, which covers many commonly used degradation models. Assumption~\ref{assum:FlatRigion} rules out increment distributions with arbitrarily large gaps in their support, reflecting that physical degradation processes usually generate wear increments across a continuous range of magnitudes. Assumptions~\ref{assum:MeanPositivity}--A.6 require that degradation increments have positive finite mean and variance and that these moment characteristics vary smoothly across components, which reflects gradual wear processes observed in many applications. Finally, Assumption~\ref{assum:converge} places mild integrability conditions on the population heterogeneity to ensure that averages involving degradation rates remain well defined. Together, these assumptions capture heterogeneous populations of components whose degradation evolves gradually over time, while excluding pathological cases that are unlikely to arise in practical systems.

\noindent
% We let $M_X:=\sup\{x\geq 0 \mid  F_X(x)<1\}$ denote the upper bound of the support of an increment, which may be infinite.

\subsection{Observation Process}
Let $T_i$ denote the lifetime of the component $i\in \N$, which is the age of the component when it is replaced.
$T_i$ is a random variable that depends on the replacement decisions of component $i$.
Let $N(t)$ denote the index of the component that operates at epoch $t$ (before potential replacement):
\[
    N(t) := \min\left\{i\in\N_0 : \sum_{j=1}^i T_j \geq t \right\}.
\]
Moreover, let $A(t)$ denote the age of the component that operates at epoch $t$ (before potential replacement): 
\[
    A(t) := t - \sum_{i=1}^{N(t)-1} T_i.
\]
Note that $N(t)$ and $A(t)$ are random variables. 
%Realizations of $N(t)$ and $A(t)$ are denoted by $a(t)$ and $n(t)$, respectively.
At each epoch, every decision maker has access to the complete past of components numbers, ages and degradation increments:
$\left\{N(j), A(j), S_{N(j),A(j)} \right\}_{j=0}^{t}$. In practice, decision makers may have access to more information and we let $Q_t$ denote additional information that becomes available at epoch $t$. For example, $Q_t$ may contain (i) the degradation level(s) measured in real-time between epochs $t-1$ and $t$ (not only the accumulated level at epoch $t$) or it may contain (ii) (real-time) sensor readings of co-variates that relate to  the condition of the component or its degradation parameter $\theta_{N(t)}$. Thus, in general, the decision maker has access to the degradation data 
\[
\Ib_t :=\left\{ N(j), A(j), S_{N(j),A(j)}, Q_j \right\}_{j=0}^t
\]
at epoch $t$, taking values in $\I_t$.
Note that $\Ib_0$ contains all historical data before the start of the planning horizon.
% An arbitrary realization of $\Ib_t$ is denoted $I_t$.
% At the beginning of each period $t \in \N$ the decision maker knows the degradation data $\I_t$ and
We let $\F$ denote the sequence of sigma-algebras generated by $\{\Ib_t\}_{t \in \N_0}$, i.e., $\F := \{\F_t\}_{t \in \N_0}$, where $\F_t = \sigma(\Ib_t)$. (Note that $\sigma$ is used here to denote an induced sigma-field, not a standard deviation.) That is, $\F$ is the natural filtration of the underlying probability space.

%For instance, when degradation follows a shock model, $q_t$ records the realized number of shocks between $t-1$ and $t$ and their associated damages, along with any additional relevant details.
Note that the true parameters of the degradation processes for the operating components $N(t)$ are not included in $\Ib_t$. However, we construct a hypothetical decision maker that does have access to the true degradation parameters $\theta_{N(t)}$. We call this decision maker the ``Oracle" and note that at epoch $t$ it has access to $\Ob_t:=\Ib_t \cup \left\{\theta_{N(j)} \right\}_{j=0}^t$. As before, we let $\O$ denote the filtration of the Oracle. 
%A (hypothetical) decision maker is called an ``\textit{Oracle}" if, at any period $t \in \N_0$, her degradation data, $I^\O_t$, contains also $\left(\theta_{n(j)} \right)_{j=1}^t$, that is $\left(\theta_{N(j)} \right)_{j=1}^t$ is included in $ \I^\O_t$. The Oracle's filtration is denoted by $\I^\O$.
%It is worth mentioning that, in period $t$, the degradation data of any decision maker contains only realizations observed up to $t$.
%All future realizations remain unknown.

\subsection{Decision Problem}
At each epoch $t\in\N_0$, the decision maker chooses either to continue operating the component or to replace it instantaneously, based on the available degradation data up to $t$, i.e., $\Ib_t$. Furthermore, the costs incurred between epochs $t-1$ and $t$ are realized at epoch $t$.
The chosen action determines the evolution of the system's state.
% At epoch $t$, replacement is carried out, if this action is selected, and the costs incurred between epochs $t-1$ and $t$ are realized.
% Notice that the operating component undergoes one further degradation increment between the decision and the execution of the replacement, and the duration of the replacement itself is assumed to be negligible.
% When an intervention occurs in period $t$, the newly installed component starts its operation in an as-good-as-new state, denoted by state 0, at the beginning of period $t+1$.
We next introduce the cost structure of the model and then formulate the cost optimization problem faced by the decision maker.

Let $c_p\in\R_+$ denote the cost to replace a component preventively. Correctively replaced components incur a cost of $c_p+c_f$ such that $c_f\in\R_+$ is the additional cost of corrective replacement. 
Finally, the system incurs an operating cost between epochs $t-1$ and $t$ whose expectation depends on the degradation level observed at epoch $t$ and is given by $\ell: \R_+ \to \R_+$.
Note that this slightly different from the usual convention where operating costs are accounted for the coming in stead of the past period. Mathematically these two conventions are equivalent and our convention will lead to cleaner notation later.
We assume that $\ell$ is non-decreasing, bounded, and continuous and satisfies the following conditions:
%Finally, the system incurs an (expected) operating cost  $\ell:\R_+\to\R_+$ in period $t$ that depends on the degradation level  at the beginning of period $t$, $S_N(t),A(t)$. 
\begin{enumerate}[resume, label=A.\arabic*. , ref=A.\arabic*]
    \item \label{assum:ell0} $\ell(0)=0$ and $\displaystyle \lim_{x\to 0} \ell(x)/x =0 $,
    \item \label{assum:ellbounded} $\ell$ attains its maximum at $L$; that is, $\ell(x)=\ell(L)$ for all $x\geq L$.
\end{enumerate}
Condition~\ref{assum:ell0} implies that operating costs, at low levels of degradation, increase gradually and that minor degradation has, in relative terms, a limited impact on operating costs. This is consistent with practice, where small amounts of wear do not yet translate into significant operating inefficiencies.

The last condition serves mainly to simplify notation, and the main results hold (with minor modifications) without it.
% At a period $t \in \N_0$ the decision maker has the following choices concerning an operating component $i = N(t)$ aged $\tau = A(t)$:
% \begin{enumerate}
%     \item If the component has not failed, it can either continue to operate until the next period at the cost $\ell(S_{i,\tau})$, or be replaced preventively at cost $\ell(S_{i,\tau})+c_p$;
%     \item If the component has failed, it should be replaced correctively at cost $\ell(S_{i,\tau}) + c_p+c_f$.
% \end{enumerate}
The maintenance cost incurred by component $i$, denoted by $C_i$, is then given by,
\begin{equation}
    \label{eq:Cnindicator}
    C_i  := c_p  + c_f \cdot \ind_{S_{i,T_i} > L} + \sum_{\tau=1}^{T_i} \ell(S_{i,\tau}),
\end{equation}
where, $\ind$ is the indicator function.

An admissible policy (or simply a policy) $\pi:=\{\pi_t\}_{t\in \N_0}$ is a sequence of functions $\pi_t: \I_t \to \{ \mbox{continue},\mbox{replace}\}$ that takes in the data available at time $t$ and maps it to a decision to continue to operate the current component, or replace it.
Specifically, for the decision maker, $\pi_t$ is $\F_t-$measurable, which is equivalent to saying that $\pi$ is adapted to $\F$.
For the Oracle, $\pi$ is adapted to $\O$.
Under a policy $\pi$, the age of component $i \in \N$ at its replacement epoch is denoted by $T_i(\pi)$, and the total maintenance cost by $C_i(\pi)$.
The (infinite horizon) cost-rate of policy $\pi$ under filtration $\F$ (or $\O$ for the Oracle), denoted $g(\pi)$, is defined as
\begin{equation}
    \label{eq:gdef}
    g(\pi) := \lim\sup_{n \to \infty} \frac{\sum_{i=1}^{n} C_i(\pi)}{\sum_{i=1}^{n} T_i(\pi)}.
\end{equation}
% Notice that the policies that a decision maker selects depend on the available information, i.e. $\F$.
Let $\Pi_\F$ and $\Pi_\O$ denote the set of policies adapted to $\F$ and $\O$, respectively. 
The optimal cost-rate under $\F$ and $\O$, denoted $g^* (\F )$ and $g^* (\O )$, respectively, are given by
\begin{equation}
    \label{eq:DecisionProblem}
    g^*(\F) := \inf_{\pi \in \Pi_\F} g(\pi), \qquad g^*(\O) := \inf_{\pi \in \Pi_\O} g(\pi).
\end{equation}
When $g^*(\F)$ $\left(g^*(\O)\right)$ can be attained by a policy, such a policy is said to be optimal and is denoted $\pi^*(\F)$ ($\pi^*(\O)$).
The objective of the decision maker is to identify an optimal policy, $\pi^*(\F)$, if one exists.

\subsection{Outline of Arguments and Main Results}
\label{subsection:Outline}
In general, proving the existence of $\pi^*(\F)$ and, if it exists, computing it is not trivial.
Moreover, its implementation in practice may be highly challenging, as the policy may have a complex structure.
In certain cases, where the state space is discrete and conjugate families are used to estimate $\theta$, this problem under the discounted total cost criterion can be solved, tractably, using a multi-dimensional POMDP \citep[cf.][]{Elwany2011,Drent2023}. However, even in these specific situations, the POMDP becomes intractable quickly as (i) the failure level and/or (ii) the number of unknown parameters increases.
To overcome these challenges, our goal is to construct policies that approximate the Oracle decisions.
Notice that by construction, the Oracle has access to more information than a real decision maker, and therefore its achievable cost-rate is less than or equal to that of any other decision maker, since $\Pi_\F \subseteq \Pi_\O$, i.e., the Oracle faces an information relaxation of the problem faced by a real decision maker and so
\begin{equation}
    \label{eq:informativeness}
    g^*(\O) \leq g^*(\F),
\end{equation}
\citep[cf. Blackwell's informativeness in][]{Blackwell1951,Blackwell1953}.
We define the regret associated with any policy $\pi \in \Pi_\F$, relative to the Oracle’s optimal policy $\pi^*(\O)$, as $\reg(\pi) := g(\pi) - g^*(\O)$.
Under this definition, inequality \eqref{eq:informativeness} is equivalent to stating that the regret of every policy $\pi \in \Pi_\F$ is nonnegative, i.e., $\reg(\pi) \geq 0$.

% The decision problem in \eqref{eq:gdef} has not yet been addressed through a general tractable framework.
% A similar problem, under the discounted total cost criterion and for certain selections of the degradation data, as well as of $F_X$, and $F_\theta$, can be formulated as a multi-dimensional POMDP and solved to optimality \citep[cf.][]{Elwany2011,Drent2023}.
In the subsequent sections, we develop an efficient policy for non-Oracle decision makers.
This policy is constructed by first estimating the unknown parameters of the operating component, $\theta_{N(t)}$, from degradation data, $\Ib_t$, using a consistent estimator, and then estimating the Oracle’s optimal actions based on these parameter estimates.
We impose no assumptions on the parameter estimator other than consistency; in particular, standard Bayesian estimators, such as the posterior mean, are consistent in our setting
We term this policy the Estimated Oracle’s Optimal Policy (EOP).

We further introduce a scaling regime in which the failure thresholds, $L$, grow large together with the cost parameters $c_p$ and $c_f$, as well as the total operating cost $\sum_{\tau=1}^T \ell(S_\tau)$ to ensure that the cost rate of any policy remains finite and positive.
Specifically, we consider a base instance with failure threshold $\tilde{L}$, cost of preventive replacement $\cT_p$, additional
corrective replacement cost $\cT_f$, and operating cost function $\ellT$.
Then we create a continuum of instances for each $k>0$ such that it has failure threshold $L(k)=k\LT$, cost of preventive replacement $c_p(k)=k\cT_p$, additional cost of corrective replacement $c_f(k)=k\cT_f$, and operating cost function $\ell(x,k) = \ellT(x/k)$.
Thus, $k$ is a scaling parameter that increases both the maintenance costs and the failure threshold.
This regime is motivated by practical settings where component lifetimes are typically much longer than the time between decision epochs, and maintenance costs constitute a significant share of the average system's operating expenses per unit time.

Under our scaling regime, the cost-rate and regret functions are parameterized by the scaling parameter $k$, that is $\reg_k(\pi):=g_k(\pi)-g_k^*$ (cf. Equation \eqref{eq:gdef}).
We now state our main theoretical result:\\
\textbf{Main Result.} \textit{The regret associated with using the EOP instead of the Oracle’s optimal policy vanishes in the limit as $k$ approaches infinity, that is}
\[
    \lim_{k \to \infty} \reg_k(\mbox{EOP}) = 0.
\]
This result is formally stated in Theorem \ref{thm:costconvergancewithlearning}.
Our main result, together with Equation \eqref{eq:informativeness}, establishes the asymptotic optimality of the EOP within the proposed scaling regime.

We organize our theoretical exposition as follows. Section \ref{section:OracleOptimal} characterizes the Oracle’s optimal policy $\pi^*(\O)$, which serves as the basis for the construction of the EOP in Section \ref{section:AOARP}.
Our main result is then rigorously presented at the end of Section \ref{section:AOARP}.
Subsequently, in Section \ref{section:AsymptoticAnalysis} we explain how the results in Section \ref{section:AOARP} can be obtained, which also provides insights that improve the interpretability of our adaptive policy for practitioners. We conclude this section by introducing the compound Poisson degradation model, which serves as the running example throughout the paper and underlies the practice-based numerical study.

% \begin{comment}
\begin{example}[Compound Poisson Degradation]
\label{ex:exampleDef}
Poisson cumulative damage models have been extensively employed in both the academic literature and practical applications to characterize the progressive deterioration of components or systems under stochastic shocks.
In these models, the operating component experiences shocks that occur according to a Poisson process with rate $\nu$. Each shock induces an i.i.d. random damage, and the damages accumulate additively over time.
Thus, $X_{i,\tau}$, $i \in \N, \tau \in \N_0$, follows a compound Poisson distribution.
Here, we omit the index $i$, for notational simplicity.

The compound Poisson process is defined as follows.
Let $M_{(\tau_1,\tau_2]}$, with $\tau_1 < \tau_2$, denote the number of shocks occurring during the time interval between ages $\tau_1, \tau_2 \in \N_0$, denoted by $(\tau_1,\tau_2]$, and let $Z_{m,\tau}$ denote the damage from the $m$-th shock in the interval $(\tau-1,\tau]$.
The sequence $\{Z_{m,\tau}\}_{m \in \N, \tau \in \N_0}$ consists of non-negative i.i.d. random variables with a common distribution function $F_Z(\cdot\mid \theta_Z)$ with a mass or density function $f_Z(\cdot\mid \theta_Z)$, where $\theta_Z$ is the parameter of $F_Z$.
The parameter space, $\Theta$, is defined as the set of all possible values of $(\nu,\theta_Z)$ that may be realized across the operating components, i.e., $\Theta = \R_{++} \times \Theta_Z$ where $\Theta_Z \subseteq \R^\rho, \rho \in \N$, denote the spaces from which $\nu$ and $\theta_Z$ are drawn according to the distributions $F_{\nu}$ and $F_{\theta_Z}$, respectively. Notice that $F_{\nu}$ and $F_{\theta_Z}$ are continuous with densities $f_{\nu}$ and $f_{\theta_Z}$, respectively.
In this case $f_{\theta}(x,\yb) = f_{\nu} (x)f_{\theta_Z}(\yb)$, where $x \in \R_++$ and $\yb \in \Theta_Z$.
By the memory-less property we have $M_{[\tau_1,\tau_2)} = \sum_{\tau=\tau_1+1}^{\tau_2} M_\tau,$ where $M_\tau:=M_{[\tau-1,\tau)}$.
Consequently, the cumulative damage between $\tau-1$ and $\tau$, i.e., $X_\tau$, is given by $X_\tau = \sum_{m=1}^{M\tau} Z_{m,\tau}$.
For any $\tau \in \N_0$, the probability mass function of $M_\tau$, is $f_M(x \mid\nu):= \P \left\{M_{[\tau, \tau+1)} =x \mid \nu\right\} = \nu^x e^{-\nu}/x!$, where, $x \in \N_0$.
Furthermore, by the law of total expectation and law of total variance we have
\begin{equation}
    \label{eq:CP_moments}
    \E \left[X_\tau \mid \nu\right] = \nu \E \left[Z_{1,1} \right], \qquad  \Var \left[X_\tau \mid \nu \right]=\nu \Var\left[Z_{1,1} \right] + \nu \left(\E \left[Z_{1,1} \right] \right)^2.
\end{equation}

To facilitate the analysis of our examples, we assume a specific distribution for $Z_{m,\tau}$. In particular, we adopt the (lattice) geometric distribution with parameter $\theta_Z = p \in (0,1)$, motivated by our practice-based case study (see Section \ref{section:casestudyresults}). In Appendix \ref{expCompoundingApp}, we additionally consider the (continuous) exponential distribution with parameter $\theta_Z = \omega \in \mathbb{R}_{++}$. 

Let $Z_{m,\tau}$ be supported on $\N_0$ and have a geometric distribution with the success probability of $p \in (0,1)$, that is, $f_Z(x\mid p):=\P \left\{Z_{m,\tau}=x \mid p \right\} = \left(1-p \right)^x p$, where $x \in \N_0$.
Next, by the application of the law of total probability the distribution mass function of $X_\tau$ is expressed by
\begin{equation}
    \label{eq:pmf_CP_geom}
    f_X \left(x \mid\nu,p \right) = \begin{cases}
        f_M \left(0 \mid \nu \right) + \displaystyle\sum_{j=1}^\infty f_M \left(j \mid \nu \right)f_{NB}(0\mid j,p), & x=0\\
        \displaystyle\sum_{j=1}^\infty f_M \left(j\mid \nu \right)f_{NB}(x\mid j,p), & x \in \N,
    \end{cases}
\end{equation}
where $f_{NB}(\cdot \mid j,p)$ is the negative binomial probability mass function with the $j$ number of successes and success probability of $p$, i.e., $f_{NB}(x \mid j, p):= \binom{x+j-1}{x}(1-p)^x p^j$, with $x \in \N_0$.
Additionally by Equations \eqref{eq:CP_moments} we have
\begin{equation}
    \label{eq:CP_moments_geom}
    \mu(\nu,p) = \frac{\nu(1-p)}{p}, \qquad \sigma^2(\nu,p) = \frac{\nu(1-p) (2- p )}{p^2}.
\end{equation}

The degradation data observed by the decision maker at epoch $t \in \N_0$ includes the history of replacement times, the number of shocks, and the size of the damages induced by each shock up to $t$.
That is,
\[
    \Ib_t = \left \{N(j),A(j),M_{N(j),A(j)},Z_{N(j),1, A(j)}, \dots, Z_{N(j), M_{N(j),A(j)}, A(j)} \right\}_{j=0}^t.
\]
This degradation data also enables the computation of $X_{N(t),A(t)}$ and $S_{N(t),A(t)}$.
The functions $F_{\nu}$ and $F_{\theta_Z}$, as well as $F_X$ for every realization of $(\nu, \theta_Z)$ are also known.
The decision maker may sequentially estimate $(\nu, \theta_Z)$ in a Bayesian manner, based on the degradation data $\Ib_t$ using the estimators
$\left(\nuH \left( \Ib_t \right),\thetaH_{Z} \left( \Ib_t \right) \right)$, with density functions $f_\nu(\cdot \mid \Ib_t)$ and $f_{\theta_Z} (\cdot \mid \Ib_t)$, respectively.

Returning to our example, we notice that $f_M(\cdot \mid\nu)$ and $f_Z( \cdot \mid p)$ belong to the one-parameter exponential family.
This property allows us to select conjugate priors for each of these (likelihood) functions, so that the posterior distributions remain within the same family, and the corresponding parameters can be updated efficiently, conditional on the degradation data $\Ib_t$ \citep[cf. Section 5.1.5.][]{Ghosh2007}.
The selection of the conjugate priors is as follows.

We first select that $f_{\nu} (\cdot) = f_\nu(\cdot \mid \Ib_0)$ corresponds to a gamma distribution with shape $\alpha_0$ and scale $\beta_0$. The density function $\theta_Z$ follows a beta distribution parametrized by $a_0$ and $b_0$. The selected $f_{\nu}$ and $f_{\theta_Z}$ are conjugate priors corresponding to likelihood functions $f_M$ and $f_Z$. In both cases, $f_{\theta_Z}$.
One can simply verify that the chosen $f_{\nu}$ and $f_{\theta_Z}$ satisfy all conditions specified in Section \ref{subsect:degrmodelHeterogeneous}.

Then, $\nuH \left( \Ib_t \right)$ follows a gamma distribution with parameters $\alpha_t$ and $\beta_t$.
Furthermore, $\thetaH_{Z} \left( \Ib_t \right)$ follows a beta distribution with parameters $a_t$ and $b_t$.
We refer to $(\alpha_t, \beta_t, a_t, b_t)$ as hyperparameters.
Let $M_{A(t)}$ and $X_{A(t)}$ be the number of shocks and the degradation increment in the interval $(t-1,t]$.
% Let $m$ and $x$ denote the number of shocks and the corresponding cumulative degradation during the period $t-1 \in \N_0$, respectively.
Then, the hyperparameters can be updated as
\begin{alignat}{2}
    \label{eq:hyperParameter_update}
    \left(\alpha_t, \beta_t, a_t, b_t \right)=& \left(\alpha_{t-1}+M_{A(t)}, \beta_{t-1}+1, a_{t-1}+M_{A(t)}, b_{t-1}+X_{A(t)} \right) \nonumber\\
    =& \left(\alpha_0+\sum_{j=1}^{A(t)} M_j, \beta_0+A(t), a_0+\sum_{j=1}^{A(t)} M_j, b_0+S_t \right)
\end{alignat}
% \begin{alignat*}{2}
%     &\left(\alpha_t, \beta_t, a_t, b_t \right)=\left(\alpha_{t-1}+M_{A(t)}, \beta_{t-1}+1, a_{t-1}+X_{A(t)}, b_{t-1}+M_{A(t)} \right), \quad && \textit{if the component is not in state 0},\\
%     &\left(\alpha_t, \beta_t, a_t, b_t \right)=\left(\alpha_0, \beta_0, a_0, b_0 \right), \quad && \textit{if the component is in state 0}.
% \end{alignat*}

We note that this analysis extends to many choices of $f_Z$ from the one-parameter exponential distributions (we do so for the exponential distribution in Appendix \ref{expCompoundingApp}) if their conjugate priors satisfy the conditions outlined in Section \ref{subsect:degrmodelHeterogeneous}. 
% , supported on $\R_+$,
% \[
%     f_Z(z; \theta_Z ) = e^{\theta_Z \psi(z) - H_1(\theta_Z)+H_0(z)}.
% \]

% We illustrate the Poisson cumulative damage models using two examples for the shock damage $Z_{m,\tau}$.
% In the first example, $Z_{m,\tau}$ follows a discrete distribution, specifically a geometric distribution with parameter $p$.
% In the second example, $Z_{m,\tau}$ follows a continuous distribution, namely an exponential distribution with parameter $\omega$.

$\hfill \blacklozenge$
\end{example}

\section{Oracle's Optimal Policies}
\label{section:OracleOptimal}
In this section, we address the optimal maintenance policy for the Oracle, i.e., $\pi^*(\O)$, whose degradation data at each epoch $t \in \N_0$ contains the parameters of the degradation process for all operating components up to $t$, namely $\theta_{N(1)}, \dots, \theta_{N(t)}$. 
Throughout this section, the discussion refers to filtration $\O$, and therefore we omit it from the notation.
For example, we write $g^*:=g^*(\O)$ and $\pi^*:=\pi^*(\O)$.
The reader may, however, interpret $\O$ implicitly wherever the context requires.

% \subsection{Oracle's Optimal Policies}
% \label{section:OracleOptimal}

A replacement policy $\pi$ is called stationary if and only if $\pi_t=\pi_{t+1}$ for all $t$.
The critical observation to find the optimal policy of the Oracle is that the future evolution of degradation at epoch $t$ depends on $\Ob_t$ only through $\theta_{N(t)}$, i.e.,  $\{S_{N(t),A(t)},\theta_{N(t)}\}_{t\in\N}$ is a Markov process for any stationary policy $\pi$. 
The process $\{S_{N(t),A(t)},\theta_{N(t)}\}_{t\in\N}$ is positive recurrent because any policy eventually replaces a component (in the worst case after failure) and expected time between failures are finite (see \eqref{eq:ET_bounds} in Appendix \ref{section:Proof_lemma:CostRateHetero}).

A stationary replacement policy $\pi$ is a parameter-specific threshold policy if there exists a threshold function $\xi:\Theta \to [0,L]$ such that at each epoch $t \in \N_0$ the operating component is replaced if and only if the degradation level is strictly greater than $\xi(\theta_{N(t)})$, i.e.,
\begin{equation*}
    \pi_t(S_{N(t), A(t)})=
    \begin{dcases}
        \mbox{continue} & \qquad \mbox{if } S_{N(t), A(t)} \leq \xi(\theta_{N(t)}),\\
        \mbox{replace} & \qquad \mbox{if } S_{N(t), A(t)} > \xi(\theta_{N(t)}).
    \end{dcases}
\end{equation*}
If an Oracle's optimal policy $\pi^*$ exists, then there exists a bounded function $v: R_+ \times \Theta \to \R$ and the optimal cost-rate $g^* \in \R$ such that
\begin{equation}
\label{eq:OPTEq}
g^* + v(s,\theta) = \ell(s) +
\begin{cases}
\min \Big\{
     \E[v(s + X_{1,1}, \theta) \mid \theta_1 = \theta],\;
    c_p + \bar{v}(\theta)
\Big\}, & s \le L,\, \theta \in \Theta, \\[0.6em]
c_p + c_f + \bar{v}(\theta), & s > L,\, \theta \in \Theta,
\end{cases}
\end{equation}
where we use the shorthand for the expected continuation value after replacement,
\[
\bar{v}(\theta)
    = \int_{\Theta} \E[v(X_{1,1}, \theta_1) \mid \theta_1 = \yb]\,
      dF_\theta(\yb),
\]
and $\pi^*$ denotes the policy that satisfies the optimality equations~\eqref{eq:OPTEq}.
It can be readily shown that a parameter-specific threshold policy is optimal under the discounted total cost criterion.
The following theorem states that even under the more challenging -- due to the general state space we are dealing with -- cost-rate criterion, there exists an optimal policy which is likewise a parameter-specific threshold policy.
\begin{theorem}
    \label{thm:ExistNStrucOPTHetero}
    % There exists a bounded function $v:[0,\infty) \times \Theta \to \R$ and an optimal cost-rate $g^* \in \R$ such that,
    \begin{enumerate}[(a)]
        % \item \label{thm:ExistNStrucOPTHeteroOPTEq} $g^* + v(s, \theta) = \begin{cases}
        %         \min \Big\{\ell(s)+\E[v(s+X_{1,1},\theta) \mid \theta_1=\theta], \ell(s)+ c_p + \displaystyle \int_{\Theta} \E[v(X_{1,1},\theta) \mid \theta_1= \yb] dF_\theta(\yb) \Big\}, & \forall s \leq L, \theta \in \Theta \\
        %         \ell(s)+c_p+c_f + \displaystyle \int_{\Theta} \E[v(X_{1,1},\theta) \mid \theta_1 = \yb] dF_\theta(\yb), & \forall s > L, \theta \in \Theta,
        %     \end{cases}$
        \item \label{thm:ExistNStrucOPTHeteroOPTPolicy}  There exists a (cost-rate) optimal policy $\pi^*$, where $g(\pi^*) = g^*$,
        \item \label{thm:ExistNStrucOPTHeteroThreshPolicy}  For each $\theta \in \Theta$, there is an optimal parameter-specific threshold policy, i.e.,
        \begin{equation*}
            \pi^*_t(S_{N(t), A(t)})=
            \begin{dcases}
                \mbox{continue} & \qquad \mbox{if } S_{N(t), A(t)} \leq \xio(\theta_{N(t)}),\\
                \mbox{replace} & \qquad \mbox{if } S_{N(t), A(t)} > \xio(\theta_{N(t)}),
            \end{dcases}
        \end{equation*}
        where $\xio: \Theta \to [0,L]$.
    \end{enumerate}
\end{theorem}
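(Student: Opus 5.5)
Because the Oracle knows $\theta_{N(t)}$ and replacements are renewals, I will work component by component with the renewal-reward theorem. The plan is to reduce the average-cost problem to a family of parametric optimal stopping problems, one for each $\theta$, and to tie them together through a single scalar $g$.

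\textbf{Step 1 (renewal reduction).} For any Oracle policy, the cycles $(T_i,C_i)$ are driven by independent $\theta_i$ and independent increments. I will first show that restricting to stationary policies that depend only on $(S_{N(t),A(t)},\theta_{N(t)})$ loses nothing; this can be done by a standard dominance argument, or by working directly with the optimality equation \eqref{eq:OPTEq}. For such policies the cycles are i.i.d., so $g(\pi)=\E[C_1(\pi)]/\E[T_1(\pi)]$ almost surely. The cycle lengths are uniformly integrable because $T_1\le \inf\{\tau: S_{1,\tau}>L\}$, and the bounds \eqref{eq:ET_bounds} together with A.4, A.5 and A.7(a) give finite expectations.

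\textbf{Step 2 (Lagrangian stopping problem).} Fix $g\ge 0$. For each $\theta$, consider the stopping problem
\[
W_g(\theta)=\inf_{\tau}\E\Big[c_p+c_f\ind_{S_{\tau}>L}+\sum_{j=1}^{\tau}\big(\ell(S_j)-g\big)\,\Big|\,\theta\Big],
\]
where $\tau$ is a stopping time, forced at first passage above $L$. By the Dinkelbach argument, $g^*$ is the unique root of $\int_\Theta W_g(\yb)\,dF_\theta(\yb)=0$.
\begin{itemize}
\item Continuity and strict monotonicity of $g\mapsto \int W_g\,dF_\theta$ in $g$ follow from the finiteness of $\E[T]$. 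The slope is at most $-\E[T]$ and the map is Lipschitz.
\item A root exists because the map is positive at $g=0$ and tends to $-\infty$ as $g\to\infty$.
\end{itemize}
Attainment in the stopping problem gives part (a). Then $v(s,\theta)$ is defined as the value of the stopping problem started at $s$, with $v$ bounded because $\ell$ is bounded and $\E[T]<\infty$, and it satisfies \eqref{eq:OPTEq}.

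\textbf{Step 3 (threshold structure).} For fixed $\theta$, the one-step problem started at state $s\le L$ compares the continuation value $\E[v(s+X,\theta)]$ with the replacement value $c_p+\bar v$, where $\bar v$ does not depend on $s$.
\begin{itemize}
\item I will show by value iteration on the finite-horizon truncations that $v(\cdot,\theta)$ is nondecreasing in $s$. The induction uses that $\ell$ is nondecreasing, that $s\mapsto\E[h(s+X)]$ preserves monotonicity, and that the failure penalty is monotone in $s$ since $\ind_{s>L}$ is nondecreasing.
\item Pass to the limit using boundedness.
\item Hence $s\mapsto \E[v(s+X,\theta)]-c_p-\bar v$ is nondecreasing, so the continuation region is an interval $[0,\xio(\theta)]$.
\item Set $\xio(\theta)=\sup\{s\le L:\E[v(s+X,\theta)]\le c_p+\bar v\}$. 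Ties are broken toward continuation, and right-continuity is handled via the continuity of $F_X$ or its lattice structure from A.3.
\end{itemize}

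\textbf{Main obstacle.} The hardest step is rigor in Step 1/2 on the uncountable state space $\R_+\times\Theta$: verifying that \eqref{eq:OPTEq} has a bounded solution and that a measurable selector attains the minimum. I will handle this through the renewal/stopping construction above rather than through vanishing-discount limits, since each cycle problem is a monotone stopping problem with a bounded horizon in expectation.
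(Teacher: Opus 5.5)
Your route is genuinely different from the paper's. You reduce to a single cycle by renewal--reward, linearize the ratio with a Dinkelbach parameter $g$, and read the threshold off the monotonicity in $s$ of a per-$\theta$ optimal stopping value. This is essentially the Aven--Bergman machinery that the paper only uses later, for Theorem~\ref{thm:lambda*}.

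The paper instead works with discounted problems:
\begin{itemize}
\item it writes the discounted Bellman equations;
\item it bounds the relative values $w_\alpha=v_\alpha-m_\alpha$ via Sch\"al's Lemma~4.1 (a geometric number of cycles until a near-minimal state is reached);
\item it invokes Sch\"al's Theorem~3.8 to get a stationary average-optimal policy;
\item it then simply asserts that a threshold policy solves the discounted equations.
\end{itemize}

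Your approach has three advantages. It gives an actual argument for part (b), which the paper only asserts. A coupling argument is even quicker than value iteration: any stopping rule feasible from $s_2$ is feasible from $s_1<s_2$ with pathwise no larger cost, since $\ell$ and $\ind_{s>L}$ are nondecreasing. It also avoids the $\alpha\to1$ limit, and it can address the pathwise ratio in \eqref{eq:gdef} directly. What the paper's route buys, at least nominally, is optimality against \emph{every} policy in $\Pi_\O$ from a general theorem.

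That is exactly the step your proposal leaves as a promissory note. Part (a) needs $g(\pi)\ge g(\pi^*)$ also for history-dependent, non-stationary Oracle policies, whose cycles are neither i.i.d.\ nor generated by one stopping rule. Steps~2--3 only identify $g_0$, the root of $\int_\Theta W_g\,dF_\theta$, which is the best rate among policies applying a fixed $\theta$-dependent stopping rule to every cycle. You need an argument along the following lines.
\begin{itemize}
\item Take an arbitrary $\pi\in\Pi_\O$ and let $\mathcal{H}_{i-1}$ be the information available when component $i$ is installed.
\item Since $\theta_i$ and component $i$'s increments are independent of $\mathcal{H}_{i-1}$, conditionally $T_i(\pi)$ is a randomized stopping time of component $i$'s own filtration, bounded by $T(L,\theta_i)$. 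Hence $\E[C_i-g_0T_i\mid\mathcal{H}_{i-1}]\ge\int_\Theta W_{g_0}(\yb)\,dF_\theta(\yb)=0$.
\item Put $Z_i:=c_p+c_f+(\ell(L)+g_0)T(L,\theta_i)$. These are i.i.d.\ and integrable by \eqref{eq:ET_bounds}, and $|C_i-g_0T_i|\le Z_i$. So the centred terms are martingale differences bounded by $Z_i+\E[Z_1]$.
\item A martingale strong law then gives $\liminf_n\frac1n\sum_{i\le n}(C_i-g_0T_i)\ge0$ a.s.
\item Since $\frac1n\sum_{i\le n}T_i\ge1$, this yields $g(\pi)\ge g_0$, hence $g_0=g^*$.
\end{itemize}
Your ``main obstacle'' (measurable selection) is then handled by doing Step~3 before integrating over $\theta$ in Step~2. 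The value-iteration iterates are jointly measurable by A.2 and monotone in $s$, so $\theta\mapsto\xio(\theta)$ is measurable.

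Two smaller points.

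First, $v$ is not bounded uniformly in $\theta$. The quantity $\E[T(L,\theta)\mid\theta]\ge L/\mu(\theta)$ is only integrable (A.7(a)), not bounded. Under the gamma prior on $\nu$ in the running example, the value of a slowly degrading component is unbounded below, of order $-1/\mu(\theta)$. Boundedness for each fixed $\theta$ is all Step~3 needs. However, the ``verify via \eqref{eq:OPTEq}'' route to dominance would need growth control of $v$ along the path, not boundedness.

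Second, the set defining your $\xio(\theta)$ can be empty. Continuing from $s=0$ is worth $W_{g^*}(\theta)$, while replacing is worth $c_p$. Now $W_{g^*}(\theta)\ge c_p+c_f\P\{X_{1,1}>L\mid\theta_1=\theta\}-g^*\E[T(L,\theta)]$, so $W_{g^*}(\theta)>c_p$ for fast degraders (e.g.\ $\nu\to\infty$) whenever $c_f>g^*$. If in addition $\P\{X_{1,1}=0\mid\theta_1=\theta\}>0$, as in the compound Poisson example, replacing is strictly better even at $s=0$. The optimal rule is then ``replace at age one'', which no $\xio(\theta)\in[0,L]$ reproduces under ``replace iff $S>\xio$''. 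You must allow a negative threshold there, and the same caveat applies to the codomain in the statement itself.
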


\noindent
We henceforth restrict attention to the class of parameter-specific threshold policies, denoted by $\Pi$, as Theorem \ref{thm:ExistNStrucOPTHetero} establishes that this set contains an optimal policy.

Now, let $\Xi$ denote the set of all threshold functions $\xi$.
Observe that the replacement of an operating component under the parameter-specific threshold policy $\pi\in \Pi$ is determined solely via the threshold function $\xi \in \Xi$; thus, it is straightforward to see that $\Xi$ and $\Pi$ are equivalent.
With slight abuse of notation, we write the cost-rate of a policy $\pi \in \Pi$ with threshold function $\xi \in \Xi$ as $g(\xi)$. Accordingly, for the optimal threshold function $\xio$, we have $g(\xio) = g^*$.

We will now study how the cost rate of any threshold policy $\xi$ can be evaluated. To this end let $T(x,\theta)$ be the lifetime of a component with degradation parameter $\theta$ if it is replaced when the degradation exceeds $x \in \R_+$:
\[
    T(x,\theta):= \inf\{\tau\in \N: S_{1,\tau}>x, \theta_1=\theta\}.
\]
Similarly, let $C(x,\theta)$ denote the costs incurred by a components with degradation parameter $\theta$ that is replaced when it degradation exceeds $x \in \R_+$:
\[
    C(x,\theta):= c_p \ind_{S_{1,T(x,\theta)} \leq L} + (c_p+c_f)\ind_{S_{1,T(x,\theta)} > L} + \sum_{\tau=1}^{T(x,\theta)} \ell(S_{1,\tau}), \qquad \theta_1=\theta.
\]
Then the following holds.
\begin{lemma}
    \label{lemma:CostRateHetero}
    Under the policy $\pi \in \Pi$ characterized by the threshold function $\xi \in \Xi$, with probability one we have
    \begin{alignat}{2}
    \label{eq:SLLNDef}
        g(\xi) =& \frac{\displaystyle\int_\Theta \E[C(\xi(\yb),\yb)] dF_\theta(\yb)}{\displaystyle\int_\Theta \E[T(\xi(\yb),\yb)] dF_\theta(\yb)} \nonumber\\
        = &\frac{c_p+ \displaystyle\int_\Theta\left(c_f \P\{S_{1,T(\xi(\theta_1), \theta_1)}>L \mid \theta_1=\yb\}+ \E \left[\sum_{\tau=1}^{T(\xi(\theta_1),\theta_1)} \ell(S_{1,\tau}) \Big| \theta_1=\yb \right] \right) dF_\theta(\yb)}{\displaystyle\int_\Theta \E[T(\xi(\yb),\yb)] dF_\theta(\yb)}.
    \end{alignat}
\end{lemma}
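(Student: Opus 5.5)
Under a fixed threshold function $\xi\in\Xi$, each component is replaced at the first epoch its degradation exceeds $\xi(\theta_i)$. The lifecycle pairs $(T_i(\pi),C_i(\pi))$ are therefore functions only of the component's own data $\theta_i$ and $\{X_{i,\tau}\}_{\tau\in\N}$. By the model assumptions these are mutually independent across $i$, with $\theta_i\sim F_\theta$ i.i.d. Hence $\{(T_i,C_i)\}_{i\in\N}$ is an i.i.d. sequence. Conditionally on $\theta_i=\yb$, the pair $(T_i,C_i)$ has the law of $(T(\xi(\yb),\yb),C(\xi(\yb),\yb))$. Note that $\xi(\yb)\le L$, so $S_{i,T_i}>\xi(\yb)$ and the failure indicator in \eqref{eq:Cnindicator} coincides with that in $C(x,\theta)$. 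The plan is a renewal-reward argument: apply the strong law of large numbers separately to the numerator and denominator of \eqref{eq:gdef}. The $\limsup$ is then an almost sure limit equal to $\E[C_1]/\E[T_1]$. Conditioning on $\theta_1$ and using Fubini gives the first line of \eqref{eq:SLLNDef}, and expanding $C$ and using $\int dF_\theta=1$ for the $c_p$ term gives the second line.

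The key step is verifying $0<\E[T_1]<\infty$ and $\E[C_1]<\infty$, so that the SLLN applies and the ratio is finite and positive. Positivity is trivial since $T_1\ge1$. For the upper bound I use $T(\xi(\yb),\yb)\le T(L,\yb)$, because $\xi\le L$ and the first passage time is monotone in the level. I then bound $\E[T(L,\theta)]$ by Wald's identity applied to the stopping time $T(L,\theta)$: $\mu(\theta)\E[T(L,\theta)]=\E[S_{T(L,\theta)}]\le L+\E[X_{T}]$. The overshoot is controlled by Lorden's inequality, $\E[S_{T}-L]\le \E[X^2]/\mu=(\sigma^2+\mu^2)/\mu$. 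Together these give
\[
\E[T(L,\theta)]\le \frac{L}{\mu(\theta)}+\frac{\sigma^2(\theta)}{\mu^2(\theta)}+1 .
\]
First I must check that $\E[T(L,\theta)]<\infty$ for each $\theta$, so that Wald's identity is legitimate. This follows from $\mu(\theta)>0$: we have $\P(X>\delta)>0$ for some $\delta>0$, so $T$ is dominated by a geometric-type variable. Integrating against $F_\theta$ and invoking Assumption~\ref{assum:converge} (a) and (b) yields $\int_\Theta \E[T(\xi(\yb),\yb)]\,dF_\theta(\yb)<\infty$. This is presumably the bound \eqref{eq:ET_bounds} referenced in the appendix.

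For the costs, $\ell$ is bounded by $\ell(L)$, so $C_1\le c_p+c_f+\ell(L)T_1$. Integrability of $C_1$ therefore follows from that of $T_1$. Measurability of $\yb\mapsto\E[T(\xi(\yb),\yb)]$ is needed to make the integrals meaningful. I expect to handle it by taking $\xi$ measurable as part of the definition of $\Xi$ and noting joint measurability of the law of $(T,C)$ in $(x,\theta)$ via A.2.

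The main obstacle is the uniform integrability step: the heterogeneity in $\theta$ may make $\E[T\mid\theta]$ unbounded. This is exactly where the first-passage bound combined with Assumption A.7 must be used carefully, including the lattice case of A.3, where Lorden's bound still holds. Once $\E[T_1]<\infty$ is established, the ratio of SLLN limits gives the claim with probability one. This holds without any regenerative subtleties, since component lifecycles are independent by construction and the final, possibly incomplete, cycle does not enter \eqref{eq:gdef}.
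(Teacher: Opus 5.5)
Your proposal is correct and follows essentially the same route as the paper: i.i.d. component cycles under a fixed threshold policy, a renewal-reward (SLLN) argument for the ratio in \eqref{eq:gdef}, and finiteness of $\E[T_1]$ and $\E[C_1]$ obtained by comparing with the run-to-failure lifetime $T(L,\theta)$. That finiteness step uses Lorden's inequality, Assumption~\ref{assum:converge}, and $C_1\le c_p+c_f+\ell(L)T_1$, exactly as in Lemma~\ref{lemma:ExpectedLifetime}. The only difference is cosmetic: you bound $\E[T(L,\theta)]$ via Wald's identity and Lorden's overshoot bound instead of the renewal function $U$, and your bound $L/\mu(\theta)+\sigma^2(\theta)/\mu^2(\theta)+1$ is exactly the form that Assumption~\ref{assum:converge}(a)--(b) makes integrable.
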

The intuition behind Lemma~\ref{lemma:CostRateHetero} is that any two components are probabilistically equivalent, and the corresponding stopping decisions follow the same probabilistic structure.
Equation \eqref{eq:SLLNDef} can be equivalently written in terms of the amount by which the degradation of a component $i$ exceeds $\xi(\theta_i)$ upon replacement. We denote this amount by the random variable $\YT_i(\xi(\theta_i))$ where $\YT_i(x)$ is expressed by $\YT_i(x):= S_{i,T_i}-x$.
% When the index $i$ is omitted, $\YT(\cdot)$ refers to $\YT_1(\cdot)$
Figure \ref{fig:samplepath} illustrates a sample degradation path for the first component, failure level $L$, replacement threshold $\xi(\theta_1)$, excess degradation $\YT_1(\xi(\theta_1))$, and $\phi\left(\xi(\theta_1) \right)$, where $\phi(x):= L-x$ represents the distance of the point $x \in [0,L]$ and $L$.
\begin{figure}[H]
    \centering
    \includegraphics[width=.7\textwidth]{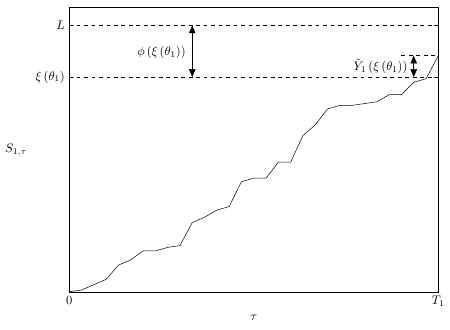}
    \caption{A sample degradation path.}
    \label{fig:samplepath}
\end{figure}
Then we have
\begin{equation}
    \label{eq:SLLN}
    g(\xi) = \frac{c_p+ \displaystyle\int_\Theta\left(c_f \P \left\{\YT_1(\xi(\theta_1))>\phi \left(\xi(\theta_1) \right) \mid \theta_1=\yb \right\}+ \E \left[\sum_{\tau=1}^{T(\xi(\theta_1),\theta_1)} \ell(S_{1,\tau})\mid \theta_1=\yb \right] \right) dF_\theta(\yb)}{\displaystyle\int_\Theta \frac{1}{\mu(\yb)} \left(\xi(\yb)+\E \left [\YT_1(\xi(\theta_1)) \mid \theta_1=\yb \right] \right) dF_\theta(\yb)},
\end{equation}
Note that $S_{i,T_i}=\sum_{\tau=1}^{T_i} X_{i,\tau}=\xi(\theta_i)+\YT_i(\xi(\theta_i))$ so that by Wald's identity $\E[T(\xi,\theta_i)]=(\xi(\theta_i)+\E[\YT_i (\xi(\theta_i))])/\mu(\theta_i)$.

Equation \eqref{eq:SLLNDef} (and equivalently \eqref{eq:SLLN}) implies that any function $\xi \in \Xi$ satisfies
\[
    \displaystyle\int_\Theta \E[C(\xi(\yb),\yb)] dF_\theta(\yb) - g^* \displaystyle\int_\Theta \E[T(\xi(\yb),\yb)] dF_\theta(\yb) \geq0,
\]
where equality holds if and only if $\xi$ corresponds to the optimal threshold function, i.e., $\xi=\xio$.
This property can be exploited to develop an iterative procedure to compute the optimal cost-rate and corresponding policies.

Define $\hT:\R \times [0,L] \times \Theta \to \R$ as
\begin{alignat}{2}
    \label{eq:hTDef}
    &\hT(\lambda, x, \theta) :=  \nonumber\\ &c_p + c_f \P \left\{\YT_1(x)>\phi \left(x \right) \Big \vert \theta_1=\theta \right\}+ \E \left[\displaystyle\sum_{\tau=1}^{T(x,\theta_1)} \ell(S_{1,\tau}) \Bigg \vert \theta_1=\theta \right] -  \frac{\lambda}{\mu(\yb)} \left(\xi(\yb)+\E \left[\YT_1(x) \Big \vert \theta_1=\theta \right]  \right),
\end{alignat}
and let the threshold $\xiT_{\lambda}(\theta)$ minimize $\hT(\lambda, x, \theta )$ for a given $\lambda \in \R$ and $\theta \in \Theta$. It follows that, for $\lambda \in \R$, the threshold function $\xiT \in \Xi$ satisfies
\[
    \xiT_\lambda \in \arg \min_{\xi \in \Xi} h(\lambda, \xi, \theta), \qquad \forall\theta \in \Theta.
\]
Notice that $\xiT_{g^*}$ determines the optimal threshold function $\xio$, and by optimality
\[
    \displaystyle\int_\Theta \hT(g^*,\xiT_{g^*},\yb) dF_\theta(\yb)=0.
\]
We build on the seminal results of \cite{Aven1986} to present the following lemma, which ensures sequential convergence to $g^*$.

\begin{theorem}
    \label{thm:lambda*}
    \begin{enumerate}[(a)]
        \item \label{thm:lambda*_fixedpoint} $g^*$ is the unique solution to the equation $\lambda = g(\xiT_\lambda)$.
        \item \label{thm:lambda*_algorithm} If $\lambda_1 \in \R$,
            % \[
            %     \lambda_1 = c_p + \int_\Theta \left(c_f\P\{X_{1,1}>L \mid \theta_1=\yb\} + \E[\ell(X_{1,1}) \mid \theta_1=\yb] \right) dF_\theta(\yb),
            % \]
            and $\lambda_{j+1} = g(\xiT_{\lambda_j})$, for $j\in\{2,3,\dots\}$, then
            \[
                \lim_{j \to \infty} \lambda_j = g^*.
            \]
    \end{enumerate}
\end{theorem}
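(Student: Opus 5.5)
This is a Dinkelbach/Aven-type fractional programming argument, carried out with the pointwise (per-$\theta$) structure of $\hT$. Write $N(\xi):=\int_\Theta \E[C(\xi(\yb),\yb)]\,dF_\theta(\yb)$ and $D(\xi):=\int_\Theta \E[T(\xi(\yb),\yb)]\,dF_\theta(\yb)$. By Lemma~\ref{lemma:CostRateHetero} and Wald's identity, $g(\xi)=N(\xi)/D(\xi)$ and $\int_\Theta \hT(\lambda,\xi(\yb),\yb)\,dF_\theta(\yb)=N(\xi)-\lambda D(\xi)$. Define $H(\lambda):=\min_{\xi\in\Xi}\{N(\xi)-\lambda D(\xi)\}$. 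Because $\hT$ decouples across $\theta$, this minimum is attained by $\xiT_\lambda$, and $H(\lambda)=\int_\Theta \hT(\lambda,\xiT_\lambda(\yb),\yb)\,dF_\theta(\yb)$.

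\emph{Preliminaries.} First I establish that $\xiT_\lambda$ exists and is measurable. For fixed $\theta$, the map $x\mapsto \hT(\lambda,x,\theta)$ on the compact set $[0,L]$ has a minimizer, obtained from lower semicontinuity via renewal-function arguments under A.2--A.4; a measurable selection can then be chosen. Second, I show that $D(\xi)$ is bounded above and below uniformly in $\xi$: it is at least $1$, and at most $\int (L+\E[\YT])/\mu\,dF_\theta$, which is finite by A.7 and the bounds \eqref{eq:ET_bounds}. Similarly $N(\xi)$ is uniformly bounded. It follows that $H$ is a pointwise minimum of affine functions of $\lambda$ whose slopes $-D(\xi)$ lie in $[-\bar D,-\underline D]$ with $\underline D\ge 1$. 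Hence $H$ is concave, continuous, and strictly decreasing, with $H(\lambda)\to\pm\infty$ as $\lambda\to\mp\infty$. So $H$ has a unique root.

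\emph{Part (a).} From $N(\xi)-g^*D(\xi)\ge 0$ for all $\xi$, with equality at $\xio$ (stated before the theorem, using Theorem~\ref{thm:ExistNStrucOPTHetero}), we get $H(g^*)=0$, so $g^*$ is the unique root of $H$. Now $\lambda=g(\xiT_\lambda)$ holds iff $N(\xiT_\lambda)-\lambda D(\xiT_\lambda)=0$, i.e.\ iff $H(\lambda)=0$, since $D>0$. Hence $g^*$ is the unique fixed point.

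\emph{Part (b).} Set $\lambda_{j+1}=g(\xiT_{\lambda_j})$. Then
\[
H(\lambda_j)=N(\xiT_{\lambda_j})-\lambda_j D(\xiT_{\lambda_j})=(\lambda_{j+1}-\lambda_j)D(\xiT_{\lambda_j}).
\]
Since $\lambda_{j+1}$ is the cost rate of an admissible policy, $\lambda_{j+1}\ge g^*$ for $j\ge1$, so $H(\lambda_{j+1})\le 0$. Therefore $\lambda_{j+2}\le\lambda_{j+1}$ from the second step onward, and the sequence is monotone nonincreasing and bounded below by $g^*$; let $\lambda_\infty$ denote its limit. Moreover $|H(\lambda_j)|\le \bar D|\lambda_{j+1}-\lambda_j|\to0$, so continuity of $H$ gives $H(\lambda_\infty)=0$, and by uniqueness of the root $\lambda_\infty=g^*$. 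The case $\lambda_1<g^*$ is harmless, because $\lambda_2\ge g^*$ already.

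The main obstacle is the preliminary step: existence of a measurable minimizer $\xiT_\lambda$ over the general parameter space, together with the uniform bounds on $D$ and $N$, which underpin the continuity and strict monotonicity of $H$. I would handle these through the renewal bounds of Appendix~\ref{section:Proof_lemma:CostRateHetero} and the integrability conditions in Assumption~\ref{assum:converge}. If attainment of the minimum fails, I would replace $\xiT_\lambda$ by $\epsilon_j$-minimizers with $\epsilon_j\to0$; the argument above goes through unchanged.
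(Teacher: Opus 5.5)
Your proposal is correct and follows essentially the same Aven/Dinkelbach route as the paper. The paper also shows that $\lambda_j$ decreases and is bounded below by $g^*$, and it identifies the limit from the vanishing of $\int_\Theta \hT(\lambda_j,\xiT_{\lambda_j},\yb)\,dF_\theta(\yb)=(\lambda_{j+1}-\lambda_j)D(\xiT_{\lambda_j})$, using that this minimized integral is monotone and concave in $\lambda$ (the paper's auxiliary lemma, parts (b)--(c)). Your Lipschitz, strictly decreasing $H$ packages the same facts and, in addition, makes explicit the uniqueness claim in part (a), which the paper leaves implicit.
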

Theorem \ref{thm:lambda*} provides a fixed-point iteration to compute $g^*$ and, consequently, the optimal replacement thresholds.
Notice that the functions in \eqref{eq:SLLN} and \eqref{eq:hTDef} can, in general, be numerically evaluated through simulation.

\section{Asymptotically Optimal Adaptive Replacement Policies}
\label{section:AOARP}

\subsection{Estimated Oracle's Optimal Policy (EOP)}
\label{subsection:EOP}
In this section, we study a non-Oracle decision maker who seeks to determine the optimal policy under the filtration $\F$, i.e., $\pi^*(\F)$.
At each epoch $t\in \N_0$, the (true) parameter of the operating component $\theta_{N(t)}$ is unknown to the decision maker and can only be estimated conditional on the degradation data $\Ib_t$.
Since only the information related to the operating component is relevant for estimation, we focus on an arbitrary component without loss of generality.
Let $\theta_{N(t_0+\tau)} \in \Theta$, denote the (true) parameter of the component installed at $t_0$ and aged $\tau$, with $t_0,\tau \in \N_0$.
The decision maker estimates $\theta_{N(t_0+\tau)}$, based on the degradation data $\Ib_{t_0+\tau}$, using a point estimator $\thetaH(\Ib_{t_0+\tau})$.
% which is $\F_{t_0+\tau}-$measurable
For instance, the decision maker first uses the prior distribution $F_\theta$, which represents the baseline belief $F_\theta(\cdot \mid \Ib_0)$, to update her knowledge of $\theta_{N(t_0+\tau)}$ by obtaining a posterior distribution $F_{\theta}(\cdot \mid \Ib_{t_0+\tau})$.
She then estimates $\theta_{N(t_0+\tau)}$ using the posterior mean, which is the Bayes estimator under the squared error loss function.
Note that the information before $t_0$ is irrelevant for posterior updating and estimation.
Therefore, we suppress the index of the operating component and $t_0$, for notational convenience, whenever the context permits.
We also assume that $\thetaH (\Ib_\tau) \in \Theta$ with probability 1, without loss of generality.
The point estimator $\thetaH (\Ib_\tau)$ is said to be (weakly) consistent if $\thetaH (\Ib_\tau) \xrightarrow{p} \theta$.
% , and strongly consistent if $\thetaH (\Ib_\tau) \xrightarrow{a.s.} \theta$.
In the following, we restrict attention to consistent point estimators and refer to them simply as point estimators.
% It is also well known in Bayesian statistics that posterior distributions are consistent in the sense that $F_\theta (\cdot \mid \Ib_\tau) \xrightarrow{w}\delta_{\theta}$ almost surely, where $\delta_{\theta}$ denotes the degenerate distribution at the true parameter value $\theta$ \citep[cf. Corollary A.2.][]{Diaconis1986}.

At each epoch, the point estimate of the parameter can be used to compute a replacement threshold based on the full-information (Oracle) optimal policy.
If the estimate were equal to the true parameter, the resulting threshold would coincide with the optimal full-information threshold.
In particular, the optimal threshold $\xio \left( \theta \right)$ can be estimated by $\xio \left( \thetaH (\Ib_\tau) \right)$.
This requires us first to calculate $g^*$ using Theorem \ref{thm:lambda*}, then to obtain $\xio$ as the minimizer of $\hT(g^*,x,\theta)$ according to Equation \eqref{eq:hTDef}, and finally to compute $\xio \left( \thetaH (\Ib_\tau) \right)$.
% If $h(\lambda,\xi,\theta)$ can be evaluated using Equation \eqref{eq:hDef}, then Theorem \ref{thm:lambda*} allows us to find $g^*$ and by that determine the corresponding optimal threshold function $\xi_{g^*}$.
Unfortunately, applying Equations \eqref{eq:SLLN} and \eqref{eq:hTDef} is generally difficult, as analytical expressions for $\P \left\{\YT_1(\xi(\theta_1))\leq x \mid \theta_1=\theta \right\}$, $\E \left[\sum_{\tau=1}^{T(\xi(\theta),\theta)} \ell(S_{1,\tau}) \Big\vert \theta_1=\theta \right]$, and $\E \left[\YT_1(\xi(\theta_1)) \Big \vert \theta_1=\theta \right]$ are not readily available.
However, we construct computationally efficient approximations of these functions and prove their asymptotic optimality as the failure threshold grows large.

Let $Y(\theta)$, $\theta \in \Theta$ be a random variable such that 
\begin{equation}
    \label{eq:YProb}
    \P\{Y (\theta) \leq x\} =
    \begin{dcases}
        \frac{1}{\mu(\theta)} \int_0^x \left(1 - F_X (y,\theta)\right)dy & \mbox{if $F_X$ is continuous,} \\
        \frac{d}{\mu(\theta)} \sum_{j=0}^{\left\lfloor x/d \right\rfloor-1} (1-F_X(jd, \theta)) & \mbox{if $F_X$ is $d-$lattice,}
    \end{dcases}
\end{equation}
where empty sums are 0 by definition and 
\begin{equation}
    \label{eq:EY}
    \E[Y(\theta)] =
    \begin{dcases}
        \frac{\sigma^2(\theta) + \mu^2(\theta)}{2 \mu(\theta)} & \mbox{if $F_X$ is continuous,} \\
        \frac{\sigma^2(\theta) + \mu^2(\theta)}{2 \mu(\theta)} +\frac{d}{2} & \mbox{if $F_X$ is $d-$lattice.}
    \end{dcases}
\end{equation}
Define $r:[0,L] \times \Theta \to \R_+$ as
\begin{equation}
    \label{eq:r_ell}
    r (x,\theta) = \begin{dcases} \E \left[\ell(X_{1,1})\mid \theta_1=\theta \right]+
        \dfrac{1}{\mu(\theta)}\displaystyle\int_{0}^{x} \E \left[\ell(X_{1,1}+y)\mid \theta_1=\theta \right] dy, & \textit{if $F_x$ is continuous,} \\
        \E \left[\ell(X_{1,1})\mid \theta_1=\theta \right]+ \dfrac{d}{\mu(\theta)}\displaystyle\sum_{j=0}^{\lfloor x/d \rfloor -1}  \E \left[\ell(X_{1,1}+jd) \Big \vert \theta_1=\theta \right], & \textit{if $F_x$ is $d$-lattice}. \\
    \end{dcases}
\end{equation}
We will later show that $Y$ is the asymptotic expansion of $\YT_i$, for all $i\in \N$ as the replacement thresholds tend to infinity.
Similarly, $r(\xi(\theta), \theta)$ provides the asymptotic expansion of $\E \left[\sum_{\tau=1}^{T(\xi(\theta),\theta)} \ell(S_{1,\tau}) \Big\vert \theta_1=\theta \right]$ in the same limiting regime.
Based on these asymptotic expansions, we approximate the cost-rate function $g(\xi)$ by $\gamma(\xi)$ given by
\begin{equation}
    \label{eq:gamma}
    \gamma(\xi) := \frac{c_p + \displaystyle \int_\Theta \left(c_f \P \left\{Y(\yb) > \phi \left(\xi(\yb) \right) \right\} + r(\xi(\yb), \yb) \right) dF_\theta(\yb)}{\displaystyle \int_\Theta \frac{1}{\mu(\yb)} \left(\xi(\yb) + \E[Y(\yb)] \right) dF_\theta(\yb)},
\end{equation}
(cf. Equation \eqref{eq:SLLN}). Morover, we approximate the function $\hT$ by $h$ expressed as
\begin{equation}
    \label{eq:hDef}
    h(\lambda, x, \theta) := c_p + c_f \P \left\{Y(\theta) > \phi \left(x \right) \right\}+ r(x, \theta) - \frac{\lambda}{\mu(\theta)} \left(x + \E[Y(\theta)] \right),
\end{equation}
(cf. Equation \eqref{eq:hTDef}).

Let $\xi_\lambda$ minimize $h$ for fixed $\lambda \in \R$, i.e., $\xi_\lambda(\theta) \in \displaystyle \arg \min_{x\in [0,L] }h(\lambda, x, \theta)$, $\theta \in \Theta$.
We propose to use the heuristic threshold functions $\xi_{\lambda}(\theta)$ instead of $\xiT_\lambda$.
While the computation of $\hT(\lambda, x, \theta)$ as per Equation \eqref{eq:hTDef} could pose challenges, $h(\lambda, x, \theta)$ can be calculated by Equation \eqref{eq:hDef} using the distribution function and the first two moments of $X_{1,1}$, having parameter $\theta$.
The next theorem explains that the heuristic threshold functions $\xi_\lambda(\theta)$ can be simply calculated using conventional search methods. This allows us to approximate the optimal cost-rate $g^*$ with the approximated optimal cost-rate $\gamma^*$ through an iterative approach similar to Theorem \ref{thm:lambda*}.

Define $Dh:\R \times [0,L] \times \Theta \to \R$ as
\begin{multline}
    \label{eq:Dh}
    Dh(\lambda,x,\theta) = \\
    {\small
    \begin{dcases}
        c_f \left(1 - F_X(\phi(x) \mid\theta)\right) + \E \left[\ell \left(x+X_{1,1} \right) \mid \theta_1=\theta  \right] - \lambda, & \textit{if $F_x$ is continuous,}\\
        c_f d \left(1 - F_X \left(d \Bigg\lfloor \frac{\phi(x)}{d}-1 \Bigg\rfloor \Bigg \vert \theta \right)\right)+ \E \left[\ell \left(d \Bigg \lfloor \frac{x+X_{1 \,1}}{d} -1\Bigg \rfloor \right) \Bigg \vert \theta_1=\theta \right] - \lambda, & \textit{if $F_x$ is $d$-lattice}.
    \end{dcases}
    }
\end{multline}
$Dh$ represents the rate of change of $h$ when $x$ is increased by an infinitesimal amount if $F_X$ is continuous, or by one unit if $F_X$ is $d-$lattice.
For any fixed $\lambda \in \R$ and $\theta \in \Theta$, let $\mathcal{D}_{\lambda,\theta}$ denote the set of the roots of $Dh(\lambda,x,\theta)$, i.e.,
\[
    \mathcal{D}_{\lambda,\theta}:= \begin{dcases}
        \left\{x \in [0,L]: Dh(\lambda,x,\theta)=0 \right\}, & \textit{if $F_x$ is continuous,}\\
        \left\{jd: j \in \N, j \leq \dfrac{L}{d}, Dh(\lambda,x,\theta)=0 \right\}, & \textit{if $F_x$ is $d$-lattice}.
    \end{dcases}
\]
If $Dh(\lambda,x,\theta)$ in strictly negative for all values of $x \in [0,L]$, then $\mathcal{D}_{\lambda,\theta}:=\{L\}$. Conversely, if $Dh(\lambda,x,\theta)$ is strictly positive for all $\mathcal{D}_{\lambda,\theta}:=\{0\}$.
Clearly, $\mathcal{D}_{\lambda,\theta}$ is a closed and bounded set, and therefore it admits a well-defined minimum element.

\begin{theorem}
    \label{thm:gamma*}
    \begin{enumerate}[(a)]
        \item \label{thm:gamma*_FixedPoint} $\gamma^*$ is the unique solution to the equation $\lambda = \gamma(\xi_\lambda)$.
        \item \label{thm:gamma*_algorithm} Choose $\lambda_1 \in \R$, and $\lambda_{j+1} = \gamma(\xi_{\lambda_j}), j=1,2,\dots$ Then
        \[
            \lim_{j \to \infty} \lambda_j = \gamma^*.
        \]
        \item \label{thm:gamma*_threshold} $Dh(\lambda, x, \theta)$ is non-decreasing in $x$; and $\xi_\lambda(\theta)$ is given by
        \[
            \xi_\lambda(\theta) = \\
            {
            \begin{dcases}
                \min \left\{x \in [0,L]: x \in \mathcal{D}_{\lambda,\theta} \right\}, & \textit{if $F_x$ is continuous,}\\
                \min \left\{jd: j\in \N,j\leq \frac{L}{d}, jd \in \mathcal{D}_{\lambda,\theta} \right\}, & \textit{if $F_x$ is $d$-lattice}.
            \end{dcases}
            }
        \]
    \end{enumerate}
\end{theorem}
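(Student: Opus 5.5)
Parts (a) and (b) follow the Dinkelbach/Aven-type argument behind Theorem \ref{thm:lambda*}, now applied to the surrogate ratio $\gamma$. Part (c) is a direct convexity computation on $h$. Throughout we write
\[
\gamma(\xi)=\frac{N(\xi)}{D(\xi)},\qquad N(\xi)=c_p+\int_\Theta\bigl(c_f\P\{Y(\yb)>\phi(\xi(\yb))\}+r(\xi(\yb),\yb)\bigr)dF_\theta(\yb),
\]
with $D(\xi)=\int_\Theta \mu(\yb)^{-1}(\xi(\yb)+\E[Y(\yb)])dF_\theta(\yb)$. By Assumption \ref{assum:converge}(a)--(b) and \eqref{eq:EY}, $D$ is finite and bounded below by $\int_\Theta \E[Y(\yb)]/\mu(\yb)\,dF_\theta(\yb)>0$ and above by $D_{\max}<\infty$ uniformly in $\xi\in\Xi$. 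Also $N$ is bounded, since $\ell$ is bounded and $r(x,\theta)\le \ell(L)(1+x/\mu(\theta))$. Set $\gamma^*:=\inf_{\xi\in\Xi}\gamma(\xi)$ and $H(\lambda):=\inf_\xi\bigl(N(\xi)-\lambda D(\xi)\bigr)=\int_\Theta h(\lambda,\xi_\lambda(\yb),\yb)\,dF_\theta(\yb)$. The pointwise minimization is legitimate because $h$ separates over $\theta$; measurability of $\theta\mapsto\xi_\lambda(\theta)$ comes from the explicit formula in (c) together with A.2 and A.6.

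\textbf{Part (a).} $H$ is an infimum of affine functions of $\lambda$ with slopes $-D(\xi)\le -D_{\min}<0$. Hence $H$ is concave, strictly decreasing and continuous, with $H(\lambda)\to\pm\infty$ as $\lambda\to\mp\infty$, so it has a unique root $\lambda^\circ$. The usual equivalence $\gamma(\xi)\ge\lambda \iff N(\xi)-\lambda D(\xi)\ge 0$ gives $H(\gamma^*)\ge 0$. If $H(\gamma^*)>0$, then $\gamma(\xi)>\gamma^*+\epsilon$ for all $\xi$, which contradicts the definition of the infimum; so $H(\gamma^*)=0$ and $\lambda^\circ=\gamma^*$. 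The identity $N(\xi_\lambda)-\lambda D(\xi_\lambda)=H(\lambda)$ shows that $\lambda=\gamma(\xi_\lambda)$ holds iff $H(\lambda)=0$, iff $\lambda=\gamma^*$. In particular the infimum is attained by $\xi_{\gamma^*}$.

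\textbf{Part (b).} The identity $\lambda_{j+1}-\lambda_j=H(\lambda_j)/D(\xi_{\lambda_j})$ yields the following. If $\lambda_j>\gamma^*$, then $H(\lambda_j)<0$, so $\lambda_{j+1}<\lambda_j$; moreover $\lambda_{j+1}=\gamma(\xi_{\lambda_j})\ge\gamma^*$. If $\lambda_1<\gamma^*$, then $\lambda_2\ge\gamma^*$ anyway. So from $j=2$ onward the sequence is monotone non-increasing and bounded below by $\gamma^*$, and converges to some $\bar\lambda\ge\gamma^*$. Continuity of $H$ together with $|H(\lambda_j)|\le D_{\max}(\lambda_j-\lambda_{j+1})\to0$ gives $H(\bar\lambda)=0$, hence $\bar\lambda=\gamma^*$.

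\textbf{Part (c).} In the continuous case, differentiate $h$ in $x$ using \eqref{eq:YProb} and \eqref{eq:r_ell}:
\[
\frac{\partial}{\partial x}\P\{Y>\phi(x)\}=\frac{1-F_X(\phi(x)\mid\theta)}{\mu(\theta)},\qquad \frac{\partial}{\partial x}r(x,\theta)=\frac{\E[\ell(x+X_{1,1})\mid\theta]}{\mu(\theta)}.
\]
Thus $\partial_x h=Dh/\mu(\theta)$. In the lattice case, the forward difference $h(\lambda,x+d,\theta)-h(\lambda,x,\theta)$ computed from the sums equals $(d/\mu)\,Dh$ up to the stated floor indexing. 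Monotonicity of $Dh$ in $x$ holds because $\phi(x)=L-x$ decreases, so $1-F_X(\phi(x))$ is non-decreasing, and because $\ell$ is non-decreasing; hence $h$ is convex (respectively discretely convex) in $x$. A convex function on $[0,L]$ is minimized at the smallest point where its derivative reaches $0$, with the boundary conventions $\mathcal{D}_{\lambda,\theta}=\{L\}$ or $\{0\}$ covering the one-signed cases. This gives the formula for $\xi_\lambda(\theta)$; taking the minimum element is a valid tie-breaking rule since $\mathcal{D}_{\lambda,\theta}$ is closed.

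\textbf{Main obstacle.} I expect the main difficulty to be the lattice-case bookkeeping in (c): matching the floor indices in \eqref{eq:Dh} with the differences of \eqref{eq:YProb} and \eqref{eq:r_ell} exactly. A secondary issue is the measurability of $\xi_\lambda$ needed to interchange the infimum and the integral in $H$. My plan for the latter is to use right-continuity of the "first root" map in $\theta$, which follows from the continuity assumptions A.2 and A.6.
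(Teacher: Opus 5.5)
Your proposal is correct and follows essentially the paper's route. For (a)--(b) the paper reruns the Aven/Dinkelbach argument of Theorem~\ref{thm:lambda*} with $g$, $\hT$, $\xiT_\lambda$ replaced by $\gamma$, $h$, $\xi_\lambda$; your version, which uses the uniform bounds $0<D_{\min}\le D(\xi)\le D_{\max}$ to get strict decrease and continuity of $H$, is a tidier form that also makes the uniqueness in (a) explicit. For (c) the paper uses monotonicity of $F_X$ and $\ell$ plus the first-order condition (a difference identity for $h$ in the lattice case), exactly as you do.

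One caveat applies equally to the paper's own proof. In the $d$-lattice case the forward difference of $h$ typically jumps over $0$, so $\mathcal{D}_{\lambda,\theta}$ as literally defined can be empty even though $Dh$ changes sign. Your ``discretely convex'' step should therefore be read as taking the first lattice point at which the forward difference becomes nonnegative.
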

By Theorem \ref{thm:gamma*}, the heuristic threshold $\xi_\lambda(\theta)$ can be obtained through the application of any root finding technique, such as bisection, on $Dh$.
Consequently, we can efficiently compute the approximated optimal cost-rate $\gamma^*$ via the iterative procedure stated in Part \ref{thm:gamma*_algorithm} of Theorem \ref{thm:gamma*}.
The resulting approximated optimal threshold function, $\xia(\theta) :=\xi_{\gamma^*} (\theta)$, $\theta \in \Theta$, minimizes $\gamma(\xi)$.
Finally, the integrals over the parameter space in the numerator and denominator of Equation \eqref{eq:gamma} can be efficiently evaluated using numerical techniques, such as Monte Carlo or Gaussian quadrature methods, once $\xi_\lambda$ is specified.
\begin{remark}
    When $\ell(x)$ is constant for all $x \in \R_+$, the computation of $\phi(\xia_\lambda(\theta))=L-\xi_\lambda(\theta)$ reduces to identifying a specific quantile of $F_X(\cdot \mid\theta)$, similar to the classical newsvendor problem. In particular, for continuous degradation processes we have that 
    \begin{equation}
        \label{eq:NewsVendor}
        \phi(\xia_\lambda(\theta))=F^{-1}_X(1-\lambda/c_f \mid \theta),
    \end{equation}
\end{remark}
except in the rare case that $F^{-1}_X(1-\lambda/c_f \mid \theta)\geq L$ in which case it is capped there.
Now, we propose our Estimated Oracle's Optimal Policy (EOP) with its adaptive thresholds expressed as
\begin{equation}
    \label{eq:adaptivethreshold}
    \xip \left( \Ib_\tau \right):=\xia \left( \thetaH (\Ib_\tau) \right)
\end{equation}
% \begin{subequations}
%     \label{eq:adaptivethreshold}
%     \begin{alignat}{2}
%         &\xip \left( \Ib_\tau \right):=\xia \left( \thetaH (\Ib_\tau) \right) && \qquad \textit{if $\theta$ is estimated using a point estimator,} \label{eq:adaptivethresholdPoint}\\
%         &\xib \left( \Ib_\tau \right):=\displaystyle \int_\Theta \xia \left( \yb \right)dF_\theta(\yb \mid \Ib_\tau) && \qquad \textit{if $\theta$ is estimated using a posterior.}\label{eq:adaptivethresholdBayesian}
%     \end{alignat}
% \end{subequations}
% \[
%     \xiH_{\tau} \left( \theta \right):= \E \left[ \xio \left( \thetaH (\Ib_\tau) \right) \Big \vert \I_{t_0+\tau} \right].
% \]
% It is worth mentioning that in \eqref{eq:adaptivethresholdBayesian}, the integral can be calculated efficiently using numerical methods.
% From a Bayesian perspective, $\xip$ can be viewed as the Bayes estimator of $\xia(\theta)$ under the mean squared error loss.
% We refer to both $\xip(\Ib_\tau)$ and $\xib(\Ib_\tau)$ collectively as our adaptive replacement thresholds, denoted by $\xiH_{\tau}$, unless it is necessary to distinguish between the two. 
In summary, EOP thresholds can be computed as follows:
\begin{enumerate}
    \item Compute $\gamma^*$ using $F_\theta(\cdot \mid \Ib_0)$, as specified in Theorem \ref{thm:gamma*},
    \item For a component aged $\tau$ compute $\xip (\Ib_\tau)$, as given in Equation \eqref{eq:adaptivethreshold},
    \item Continue operating the component while $S_{\tau} \leq \xip (\Ib_\tau)$; replace otherwise.
\end{enumerate}
EOP is broadly applicable across various degradation and estimation processes, as its parameter estimation and decision making steps are decoupled and operate independently.
For practical implementation, the policy can be explained to practitioners in terms of an estimator of the safety margin around the failure level, defined by $\phi \left(\xip (\Ib_\tau)\right)$.
This estimator provides an indicator of how close the system is to its failure level, so the decision maker can take informed replacement actions while accounting for uncertainty in both the degradation and parameter estimates.

\begin{example}[Compound Poisson Degradation, continued]
\label{ex:exampleDef_1}
We continue with the compound Poisson degradation model from Example \ref{ex:exampleDef}  to illustrate the practical implementation of our adaptive threshold policies.
We first specify a functional form for $\ell$ and derive the corresponding expression for $r$ using Equation \eqref{eq:r_ell}.
Next, for each example, we demonstrate how to obtain closed-form expressions for the distribution function and expectation of $Y$ based on Equations \eqref{eq:YProb} and \eqref{eq:EY}, as well as for $\xi_\lambda$ and $\gamma^*$ according to Theorem \ref{thm:gamma*}.
Finally, we show how to determine $\xip(\Ib_\tau)$ using the posterior mean following Equation \eqref{eq:adaptivethreshold} for both examples.

% We consider a quadratic cost function for $\ell$, i.e., $\ell(x)=a_2x^2+a_1x$, which implies that
% \begin{equation}
%     \label{eq:ell_quadratic}
%     \E[\ell(x+X_{1,1})\mid \theta_1=\theta]= a_2 x^2+ (2a_2\mu(\theta)+a_1)x + a_2 (\sigma^2(\theta) + \mu^2(\theta))+a_1\mu(\theta)
% \end{equation}
% In this case, from Equation \eqref{eq:r_ell} it can be verified that if $F_X$ is continuous
% \begin{equation}
%     \label{eq:r_ell_continuous}
%     r(x,\theta)= \frac{a_2}{3\mu(\theta)}x^3 + \frac{2a_2 \mu(\theta)+a_1}{2\mu(\theta)} x^2+ \frac{a_2 \left(\sigma^2(\theta)+\mu^2(\theta)\right) + a_1\mu(\theta)}{\mu(\theta)} \left(x+\mu(\theta) \right),
% \end{equation}
% and if $F_X$ is $d-$lattice we have
% \begin{alignat}{2}
%     \label{eq:r_ell_lattice}
%     r(x,\theta)= &
%     \frac{a_2d}{6\mu(\theta)} \left(x^\prime-1\right) x^\prime \left(2x^\prime-1\right)+ \frac{(2a_2\mu(\theta)+a_1)d}{2\mu(\theta)}(x^\prime-1)x^\prime \nonumber\\
%     &+ \frac{\left(a_2 \left(\sigma^2(\theta)+\mu^2(\theta) \right) + a_1 \mu(\theta) \right)d}{\mu(\theta)} \left(x^\prime+\frac{\mu(\theta)}{d}\right),
% \end{alignat}
% where $x^\prime = x/d$ and $x$ is a multiple of d.

We first describe the computation of $\xi_\lambda(\nu,p)$ based on Theorem \ref{thm:gamma*}\ref{thm:gamma*_threshold}.
Let $F_{NB}(\cdot \mid l,p)$ denote the distribution function of a negative binomial random variable with parameters $l$ and $p$. Then
\begin{alignat}{2}
    \label{eq:cdf_CP_geom}
    F_X(x \mid \nu, p) = & \sum_{j=0}^{x} f_X(j \mid \nu, p) = f_M(0 \mid \nu) + \sum_{j=0}^{x} \sum_{l=1}^\infty f_M \left(l\mid \nu \right) f_{NB}(j\mid l,p) \nonumber\\
    = & f_M(0 \mid \nu)+\sum_{l=1}^\infty f_M \left(l\mid \nu \right) F_{NB} (x \mid l,p).
\end{alignat}
We use Theorem \ref{thm:gamma*}\ref{thm:gamma*_threshold} together with Equation \eqref{eq:cdf_CP_geom} to derive a relation for $\xi_\lambda$.
% \begin{alignat}{2}
%     \label{eq:xi_lambda_CP_geom}
%     \xi_\lambda(\nu,p)= &\min_{x \in \{0,\dots,L\}} x\\
%     \textit{s.t.} \quad &  a_2 x^2+ \left( \frac{2a_2\nu(1-p)}{p} +a_1 \right)x - c_f \sum_{l=1}^\infty f_M \left(l\mid \nu \right) F_{NB} (x-1 \mid l,p) \nonumber\\
%     &= \lambda -c_f (1 - f_M(0 \mid \nu)) -\frac{a_2\nu(1-p)(\nu(1-p)+2-p)}{p^2} - \frac{a_1\nu(1-p)}{p} \nonumber
% \end{alignat}
% The optimization problem \eqref{eq:xi_lambda_CP_geom} can be solved simply using the bisection method.
% As discussed earlier, if Equation \eqref{eq:xi_lambda_CP_geom} does not admit a solution, $\xi_\lambda(\nu,p)$ takes its value at one of the boundary points, i.e., $\xi_\lambda(\nu,p) \in \{0,L\}$.

Next, we describe the calculation of $\gamma^*$ using Theorem \ref{thm:gamma*}\ref{thm:gamma*_algorithm}. The key steps are to evaluate $\P\{Y(\nu,p)\leq x\}$, $\E[Y(\nu,p)]$, and $r(x,\nu,p)$ that allow the calculation of $\gamma(\xi)$ according to Equation \eqref{eq:gamma}.
By Equation \eqref{eq:YProb}, when $F_X$ is $1-$lattice and $x \in \N$, we have
\begin{alignat}{2}
    \label{eq:YProb1Lattice}
    \P\{Y(\nu,p)\leq x\}= & \frac{1}{\mu(\theta)} \sum_{j=0}^{x-1}(1-F_X(j\mid\theta)) =  \frac{1}{\mu(\theta)} \sum_{j=0}^{x-1} \left(1- \sum_{l=0}^j f_X(l \mid \theta) \right) \nonumber\\
    = & \frac{1}{\mu(\theta)} \left(x- \sum_{j=0}^{x-1} (x-j)f_X(j \mid \theta) \right) = \frac{1}{\mu(\theta)} \left(x- x F_X(x-1 \mid \theta)+ \sum_{j=0}^{x-1} jf_X(j \mid \theta) \right).
\end{alignat}
We use Equation \eqref{eq:pmf_CP_geom} to derive a relation for the last term in \eqref{eq:YProb1Lattice}. Let $\mu_{NB}(l,p):= \dfrac{l(1-p)}{p}$ denote the mean of a negative binomial random variable with parameters $l$ and $p$, then
\begin{alignat}{2}
    \label{eq:PartialMean_CP_geom}
    \sum_{j=0}^{x-1} jf_X(j \mid \nu, p) = & \sum_{j=1}^{x-1} \sum_{l=1}^\infty f_M \left(l\mid \nu \right)jf_{NB}(j\mid l,p)
    = \sum_{j=1}^{x-1} \sum_{l=1}^\infty f_M \left(l\mid \nu \right)j \binom{j+l-1}{j} (1-p)^j p^l \nonumber\\
    = &\sum_{j=1}^{x-1} \sum_{l=1}^\infty f_M \left(l\mid \nu \right) \frac{l(1-p)}{p} \binom{j+l-1}{j-1} (1-p)^{j-1} p^{l+1} \nonumber\\
    = & \sum_{j=1}^{x-1} \sum_{l=1}^\infty f_M \left(l\mid \nu \right) \mu_{NB}(l,p) f_{NB}(j-1\mid l+1,p) \nonumber\\
    = & \sum_{l=1}^\infty f_M \left(l\mid \nu \right) \mu_{NB}(l,p) F_{NB}(x-2\mid l+1,p)
\end{alignat}
Combining \eqref{eq:CP_moments_geom}, \eqref{eq:YProb1Lattice}, \eqref{eq:cdf_CP_geom}, and \eqref{eq:PartialMean_CP_geom} gives us
\begin{alignat}{2}
    \label{eq:YDist_CP_geom}
    &\P\{Y(\nu,p)\leq x\}= \nonumber\\
    &\frac{p}{\nu(1-p)} \left(x \left(1-f_M(0 \mid \nu)\right) + \sum_{l=1}^\infty f_M(l \mid \nu) \Big(\mu_{NB}(l,p)F_{NB}(x-2\mid l+1,p) - xF_{NB} (x-1 \mid l,p) \Big)\right)
\end{alignat}
By Equations \eqref{eq:CP_moments_geom} and \eqref{eq:EY}
\begin{equation}
    \label{eq:EY_CP_geom}
    E[Y(\nu,p)] = \frac{(\nu (1-p)/p)^2 + \nu(1-p)(2-p)/p^2}{2\nu (1-p)/p} +\frac{1}{2} = \frac{\nu(1-p)+2}{2p}.
\end{equation}
% By \eqref{eq:r_ell_lattice} and \eqref{eq:CP_moments_geom} $r(x,\nu,p)$ is given by
% \begin{alignat}{2}
%     \label{eq:r_ell_CP_geom}
%     r(x,\nu,p)= &
%     \frac{a_2p}{6\nu(1-p)} \left(x-1\right) x \left(2x-1\right)+ \frac{2a_2\nu(1-p)+a_1p}{2\nu(1-p)}(x-1)x \nonumber\\
%     &+ \frac{a_2 \left(\nu(1-p)+2-p \right) + a_1 p }{p} \left(x+\frac{\nu(1-p)}{p}\right).
% \end{alignat}
Using \eqref{eq:gamma} together with \eqref{eq:YDist_CP_geom} and \eqref{eq:EY_CP_geom}, one can compute $\gamma(\xi)$ efficiently.
Subsequently, $\gamma^*$ is determined using the iterative algorithm described in Theorem \ref{thm:gamma*}\ref{thm:gamma*_algorithm}.
Once $\gamma^*$ is computed, $\xia(\nu,p)= \xi_{\gamma^*}(\nu,p)$ can be calculated using Theorem \ref{thm:gamma*}\ref{thm:gamma*_threshold}.

Next, we describe how to compute the EOP threshold using the available degradation data $\Ib_\tau=(S_\tau, M_\tau,\tau)$ for the first component, where the decision epoch coincides with the component’s age.

We use Equation \eqref{eq:hyperParameter_update} to update the posterior and use the posterior mean as the point estimator of the parameter, that is
\begin{alignat}{2}
    \label{eq:posteriorMean_CP_geom}
    \thetaH(\Ib_\tau)= & \left(\hat{\nu}(\Ib_\tau), \thetaH_Z(\Ib_\tau) \right)= \left(\frac{\alpha_\tau}{\beta_\tau}, \frac{a_\tau}{a_\tau+b_\tau} \right) \nonumber\\
    =& \left(\frac{\alpha_{\tau-1}+M_\tau}{\beta_{\tau-1}+1}, \frac{a_{\tau-1}+M_\tau}{a_{\tau-1}+b_{\tau-1}+M_\tau+X_\tau} \right)= \left(\frac{\alpha_0+\sum_{j=1}^\tau M_j}{\beta_0+\tau}, \frac{a_0+\sum_{j=1}^\tau M_j}{a_0+b_0+\sum_{j=1}^\tau M_j+S_\tau} \right),
\end{alignat}
and
\[
   \xip(\Ib_\tau)= \xia  \left(\hat{\nu}(\Ib_\tau), \thetaH_Z(\Ib_\tau) \right).
\]
% Additionally, $\xiH^b_t$ can be calculated using the updated posterior as follows.
% Similar to Section \ref{subsubsection:exampleDef}, let $f_\nu (\cdot \mid \alpha,\beta)$ denote the density of a gamma distribution with shape parameter $\alpha$ and rate parameter $\beta$, and let $f_{\theta_Z}(\cdot \mid a,b)$ denote the density of a beta distribution with parameters $a$ and $b$.
% Then
% \[
%     \xiH^b_t(\nu,p) = \int_0^1 \int_0^\infty \xi_{\gamma^*}(x,y) f_\nu(x \mid \alpha_t,\beta_t) f_{\theta_Z}(y \mid a_t,b_t) dx dy,
% \]
% where the updated hyperparameters are given by Equation \eqref{eq:hyperParameter_update}.
% This double integral can be evaluated efficiently using numerical methods such as Monte Carlo approximation.
$\hfill \blacklozenge$
\end{example}

\subsection{Scaling Regime and Main Asymptotic Optimality Results}
Our analysis so far provides a tractable policy that fully exploits the available data for decision making in an otherwise intractable maintenance problem.
Beyond tractability, a key performance measure of any policy is its optimality gap, which we examine in the arguments that follow.
We restrict our study to large failure thresholds for several reasons.
First, as the average time to failure increases, exact methods quickly become intractable, and heuristics such as the one we propose become particularly attractive due to their tractability advantages.
Second, from a practical perspective, new equipment often operates for a relatively long time before failing.
Finally, in terms of learning effectiveness, short failure times leave little opportunity for accurate parameter estimation, which lead to high estimation errors.
In such cases, the problem can be reduced to a homogeneous population, as posterior distributions just before replacements are nearly identical across components.

Next, we study the EOP thresholds $\xip(\Ib_\tau)$ under the scaling regime described in Section \ref{subsection:Outline}, and formally state our main result that $\reg_k(\xip(\Ib_\tau))$ vanishes as the scaling parameter increases.
% We will show that the Oracle's optimal thresholds $\xio (\theta)$ become large, in this regime, for all $\theta \in \Theta$, while the corresponding optimal cost-rates remain bounded away from both zero and infinity.
% We will also prove that $\xia$ is asymptotically optimal for arbitrarily large $k$ which will be formally presented in the next section.
Let, for $k>0$, $\Xi_k$ be the set of all threshold functions $\xi:\Theta \to [0,L(k)]$. Define $g_k: \Xi_k \to \R_+$ and $\gamma_k: \Xi_k \to \R_+$ by
\begin{equation}
    \label{eq:g_k}
    g_k \left(\xi \right):= \frac{c_p(k) + \displaystyle\int_\Theta \left(c_f(k) \P \left\{\YT_1 \left(\xi(\theta_1) \right) > \phi_k \left(\xi(\theta_1) \right) \Bigg \vert \theta_1=\yb \right\} + \E \left[\sum_{\tau=1}^{T \left(\xi(\theta_1),\theta_1 \right)} \ell(S_{1,\tau},k) \Bigg\vert \theta_1=\yb \right]  \right) dF_\theta(\yb)}{\displaystyle \int_\Theta \frac{1}{\mu(\yb)} \left(\xi(\yb) + \E \left[\YT_1 \left(\xi(\theta_1) \right) \Bigg \vert \theta_1=\yb \right] \right) dF_\theta(\yb)}
\end{equation}
and
\begin{equation}
    \label{eq:gamma_k}
    \gamma_k \left(\xi \right):= \frac{c_p(k) + \displaystyle \int_\Theta \left(c_f(k) \P \left\{Y(\yb) > \phi_k \left(\xi(\yb) \right) \right\} + r_k\left(\xi, \yb \right) \right) dF_\theta(\yb)}{\displaystyle \int_\Theta \frac{1}{\mu(\yb)} \left(\xi(\yb) + \E \left[Y(\yb) \right] \right) dF_\theta(\yb)},
\end{equation}
where $\phi_k(x):= L(k) - x$, $x \in [0,L(k)]$, and
$r_k:[0,L(k)] \times \Theta \to \R_+$ as
\begin{equation}
    \label{eq:rkDef}
    r_k (x,\theta) = \begin{cases} \E \left[\ell(X_{1,1},k)\mid \theta_1=\theta \right]+
        \dfrac{1}{\mu(\theta)}\displaystyle\int_{0}^{x} \E \left[\ell(X_{1,1}+y,k)\mid \theta_1=\theta \right] dy, & \textit{if $F_x$ is continuous,} \\
        \E \left[\ell(X_{1,1},k)\mid \theta_1=\theta \right]+ \dfrac{d}{\mu(\theta)}\displaystyle\sum_{j=0}^{\big\lfloor x/d \big\rfloor}  \E \left[\ell(X_{1,1}+jd,k) \mid \theta_1=\theta \right], & \textit{if $F_x$ is $d$-lattice}. \\
    \end{cases}
\end{equation}
Define the minimizers of $g_k$ and $\gamma_k$ by
\[
    \xio_k \in \arg \min\limits_{\xi \in \Xi_k} g_k \left(\xi \right), \qquad \mbox{and}, \qquad \xia_k \in \arg \min\limits_{\xi \in \Xi_k} \gamma_k \left(\xi \right),
\]
Then our EOP thresholds in the scaling regime are defined as
\begin{equation}
    \label{eq:xiH_k}
    \xip_k(\Ib_\tau) :=\xia_k \left( \thetaH (\Ib_\tau) \right).
\end{equation}
% For a component with parameter $\theta$, let $\Tcal_k(\theta)$ be a random variable that denotes the age of the operating component at replacement under the sequence of EOP thresholds $\left\{\xip_k(\Ib_\tau) \right\}_{\tau=1}^{\Tcal_k(\theta)}$.
% We denote by $\xipklast(\theta):= \xip_k \left(\Ib_{\Tcal_k(\theta)} \right)$ the last EOP threshold by which the replacement actually occurs.
% Observe that $\xipklast(\theta)$ is a random variable whose distribution is determined by the true parameter $\theta$.
% Define $\xipklast$ as the set of all such $\xipklast(\theta)$, i.e., $\xipklast:= \left \{\xipklast(\theta): \theta \in \Theta \right\}$.
% The cost-rate of EOP policies under the scaling parameter $k>0$ is a random variable determined by $ g_k \left(\xipklast \right)$.
% Specifically, $\xip_{(k)}(\theta)$ is the last adaptive threshold when decision making is based on \eqref{eq:xiH_kPoint}, and $\xiH^b_{(k)}(\theta)$ is the last adaptive threshold when decision making is based on \eqref{eq:xiH_kBayesian}.
% Let $m \in \{p, a.s.\}$ denote the mode of consistency of the point estimator, that is, $\thetaH_\tau \xrightarrow{m} \theta$.
Next, we present the main theoretical result of this paper.
\begin{theorem}
    \label{thm:costconvergancewithlearning}
    As $k \to \infty$, the cost-rate of the EOP thresholds converge to the cost-rate of the Oracle's optimal policy, that is
    \[
        \lim_{k\to \infty} g_k \left(\xip_k \right) - g_k \left(\xio_k \right) = \lim_{k \to \infty} \reg(\xip_k) =0.
        % g_k \left(\xipklast \right) \xrightarrow{m} g_k \left(\xio_k \right).
        % \qquad \textit{and,} \qquad \gamma_k \left(\xipklast \right) \xrightarrow{m} \gamma_k \left(\xia_k \right).
    \]
\end{theorem}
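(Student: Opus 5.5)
Since $\reg$ is nonnegative, it suffices to bound $g_k(\xip_k)$ from above by $g_k(\xio_k)+o(1)$. All cost-rates are $O(1)$ under the scaling, so I will rescale everything by $k$. Write $\xi = k u$ with $u:\Theta\to[0,\LT]$. Since $c_p(k)=k\cT_p$, $c_f(k)=k\cT_f$, $\ell(x,k)=\ellT(x/k)$ and $L(k)=k\LT$, the numerator of \eqref{eq:g_k} is $k$ times a quantity $N_k(u)$ and the denominator is $k$ times $D_k(u)=\int_\Theta \mu(\yb)^{-1}(u(\yb)+\E[\YT_1]/k)\,dF_\theta(\yb)$.

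The plan has three steps.

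\textbf{Step 1: fluid limit.} Show that $g_k$ and $\gamma_k$ converge, uniformly over threshold functions, to a common limit $g_\infty(u)$. I will use the key renewal theorem and Assumptions A.3--A.5. The overshoot $\YT_1(x)$ converges in distribution to $Y(\theta)$ as $x\to\infty$, and its mean stays bounded by Lorden's inequality, $\E\YT\le (\sigma^2+\mu^2)/\mu$. Assumption A.7 makes the bound integrable in $\theta$, so the overshoot contributes $O(1/k)$ to the denominator. The failure probability $\P\{\YT_1(ku)>k(\LT-u)\}$ is small unless $\LT-u=O(1/k)$. By the elementary renewal theorem and Assumption A.8, the operating cost $k^{-1}\E\sum\ellT(S_\tau/k)$ tends to $\mu(\theta)^{-1}\int_0^{u}\ellT(y)\,dy$. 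Hence
\[
g_\infty(u)=\frac{\cT_p+\int_\Theta \left(\cT_f\,\ind_{u(\yb)\ge \LT}+\mu(\yb)^{-1}\int_0^{u(\yb)}\ellT(y)dy\right)dF_\theta(\yb)}{\int_\Theta u(\yb)\mu(\yb)^{-1}dF_\theta(\yb)}.
\]
Failure can be avoided at vanishing cost by setting $u=\LT-\delta$ for small $\delta$. From this I conclude that $g_k(\xio_k)$, $\gamma_k(\xia_k)$ and $g_\infty^*:=\inf_u g_\infty$, defined with a failure-free margin, all converge to the same positive finite limit. Positivity comes from the bounded denominator together with $c_p>0$, and finiteness from Assumption A.7(a). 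I expect to establish this via $\liminf g_k(\xio_k)\ge g_\infty^*$ (lower semicontinuity of the limit functional) and $\limsup \gamma_k(\xia_k)\le g_\infty^*$.

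\textbf{Step 2: structure of $\xia_k$.} By Theorem \ref{thm:gamma*}\ref{thm:gamma*_threshold} and \eqref{eq:Dh}, $\xia_k(\theta)$ is the root of $k\cT_f(1-F_X(\phi_k(x)\mid\theta))+\E\ellT((x+X)/k)-\gamma^*_k=0$. This gives a margin $\phi_k(\xia_k(\theta))=F_X^{-1}(1-\cdot\mid\theta)$ at level $O(1/k)$ in probability, i.e. a safety margin of order $O(1)$ in absolute degradation units, or $u\equiv\LT$ when $\ellT$ forces earlier replacement. Assumptions A.2 and A.4 should make this margin continuous in $\theta$. On the scaled axis, $\xia_k(\theta)/k\to u^*(\theta)$, where $u^*$ is the fluid-optimal profile, with continuity in $\theta$ following from A.6.

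\textbf{Step 3: learning (main obstacle).} The EOP threshold at age $\tau$ is $\xia_k(\thetaH(\Ib_\tau))$. The component reaches degradation of order $k$ only after $\Theta(k)$ epochs, so consistency gives $\thetaH(\Ib_\tau)\xrightarrow{p}\theta$ along the relevant ages. I would fix $\epsilon>0$ and consider the event $E_k=\{\|\thetaH(\Ib_\tau)-\theta\|<\epsilon$ for all $\tau\ge \eta k\}$. Since the estimator is only weakly consistent, a uniform-in-$\tau$ statement is not free. My first attempt is to work only with the ages near the crossing time, $\tau\approx ku^*(\theta)/\mu(\theta)$, at which weak convergence suffices, together with a union bound over an $O(\sqrt k)$ window justified by the CLT for $S_\tau$. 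Using the continuity from Step 2, on $E_k$ the EOP replaces at a degradation within $k\,o(1)$ of $\xia_k(\theta)$. A single component's contribution to $N_k$ and $D_k$ is then within $o(1)$ of the Oracle-approximate one, except for the failure term, which needs the margin to remain $\ge$ an absolute amount. I therefore plan to replace $u^*$ by $\LT-\delta$ truncation arguments and use a continuity/$\delta$-sandwich. Off $E_k$, costs per cycle are bounded by $k(\cT_p+\cT_f+\ellT(\LT)\LT/\mu)$ scaled, so the contribution is at most a constant times $\P(E_k^c)\to0$, integrated over $\theta$ by dominated convergence with A.7. Combining the three steps gives $\limsup g_k(\xip_k)\le g_\infty^*=\lim g_k(\xio_k)$.
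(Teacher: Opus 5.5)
Steps 1--2 are a workable alternative to the paper's route through Lemmas~\ref{lemma:gasymptotic}--\ref{lemma:threshold_order} and Theorem~\ref{thm:costconvergancenolearning}, modulo two corrections at the end. Step 3, however, has a genuine gap. The EOP replaces at the \emph{first} age $\tau$ with $S_\tau>\xia_k(\thetaH(\Ib_\tau))$, so the estimated thresholds must be harmless simultaneously at every age up to replacement, not at one age. Your window device does not give this. A union bound over $O(\sqrt{k})$ ages yields $\sum_{\tau\in W}\P\{\Vert\thetaH(\Ib_\tau)-\theta\Vert\geq\epsilon\}\leq C\sqrt{k}\,\sup_{\tau\in W}\P\{\Vert\thetaH(\Ib_\tau)-\theta\Vert\geq\epsilon\}$, and weak consistency supplies no rate, so this need not vanish. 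Moreover, neither the window nor $E_k$ constrains the earlier ages $\tau<\eta k$. There $\thetaH(\Ib_\tau)$ may still be far from $\theta$, and for extreme parameter values $\xia_k$ is small or even $0$, so one such age ends the cycle at age $o(k)$. Hence ``on $E_k$ the EOP replaces within $k\,o(1)$ of $\xia_k(\theta)$'' does not follow even on $E_k$. Premature stopping shrinks the denominator $D_k$, which no bound on the numerator off $E_k$ compensates.

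This cannot be repaired under weak consistency alone, because the statement is then false. Take the gamma-process example with $\ell\equiv0$. There $\xia_k(\beta)=L(k)-\min\{q_k/\beta,L(k)\}$, where $q_k$ is the $(1-\gamma_k^*/(k\cT_f))$-quantile of a unit-rate gamma law, so $q_k\to\infty$ logarithmically. Let the estimator equal the posterior mean, except that at each age $\tau$, independently with probability $1/\log(\tau+1)$, it reports $\beta=1/\tau$ (the coin can be recorded in $Q_\tau$). This estimator is weakly consistent. Yet each such report at an age $\tau\geq k\LT/q_k$ gives threshold $0$. Since $\sum_{k\LT/q_k\leq\tau\leq 2k\LT/q_k}1/\log(\tau+1)\to\infty$, the component is replaced by age $2k\LT/q_k=o(k)$ with probability tending to one. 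Then $\E[\Tcal_k]=o(k)$, so $g_k(\xip_k)\geq k\cT_p/\E[\Tcal_k]\to\infty$.

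What you need is control uniform in age, e.g.\ strong consistency, used pathwise. The data and estimates do not depend on $k$, and almost surely only finitely many ages have $\Vert\thetaH(\Ib_\tau)-\theta\Vert\geq\epsilon$. At each of these fixed ages $\xia_k(\thetaH(\Ib_\tau))\to\infty$ by Lemma~\ref{lemma:threshold_order}, so they cannot trigger replacement once $k$ is large. Afterwards the estimates stay where $\xia_k/k$ converges uniformly and the failure probability is small. The domination $\Tcal_k\leq T(L(k),\theta)$ together with Assumption~\ref{assum:converge} then passes everything to expectations.

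The paper does not resolve this either. It plugs the last threshold $\xipklast$ into fixed-threshold formulas and obtains $\thetaH(\Ib_{\Tcal_k})\xrightarrow{p}\theta$ by conditioning on $\Tcal_k$. That ignores that $\Tcal_k$ is selected by the estimates; in the example, $\thetaH(\Ib_{\Tcal_k})$ is precisely a bad report.

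Two smaller corrections:
\begin{itemize}
\item In Step 1, uniform convergence over threshold functions is false for thresholds within $O(1)$ of $0$ or of $L(k)$. The one-sided liminf/limsup bounds you fall back on are correct and suffice.
\item In Step 2, the scaled pointwise objective is $\mu(\theta)^{-1}\int_0^u(\ellT(y)-\lambda)\,dy$, whose minimizers do not depend on $\theta$, so A.6 is irrelevant. The parameter $\theta$ enters only through the margin $F_X^{-1}(1-\gamma_k^*/(k\cT_f)\mid\theta)$. This margin need not be $O(1)$ (it grows like $\log k$ for gamma increments). Under Assumption~\ref{assum:FlatRigion} it is continuous in $\theta$ only up to jumps of size $\xf$.
\end{itemize}
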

% \begin{theorem}
%     \label{thm:costconvergancewithlearning}
%     As $k \to \infty$
%     \begin{enumerate}[(a)]
%         \item \label{thm:costconvergancewithlearning_pointEstimator} $g_k \left(\xip_{(k)} \right) \xrightarrow{m} g_k \left(\xio_k \right)$, and $\gamma_k \left(\xip_{(k)} \right) \xrightarrow{m} \gamma_k \left(\xia_k \right)$;
%         \item \label{thm:costconvergancewithlearning_posterior} $g_k \left(\xiH^b_{(k)} \right) \xrightarrow{a.s.} g_k \left(\xio_k \right)$, and $\gamma_k \left(\xiH^b_{(k)} \right) \xrightarrow{a.s.} \gamma_k \left(\xia_k \right)$.
%     \end{enumerate}
% \end{theorem}
Theorem \ref{thm:costconvergancewithlearning} establishes that the cost-rates of our EOP thresholds are asymptotically optimal as $k \to \infty$.
% In particular, the cost-rate achieved by a decision maker who relies on point estimates of the true parameter $\theta$ converges to that of an Oracle with full information of $\theta$, in the same sense as the consistency of the estimator for $\theta$.
% When the decision maker uses the posterior distribution to compute $\xiH^b$ , the convergence is almost surely at the cost of additional but still tractable computational effort.

\section{Asymptotic Analysis}
\label{section:AsymptoticAnalysis}
% In this section, we present the argument for an operating components which is installed at epoch $t_0 \in \N_0$.
% We suppress the index of the component and $t_0$, for notational convenience, following the argument in the first paragraph of Section \ref{subsection:EOP}.
We next outline the main steps of the proof of Theorem \ref{thm:costconvergancewithlearning}.
\begin{enumerate}
    \item We first show that, under the scaling regime, both $\xio_k(\theta)$ and $\xia_k(\theta)$ diverge to infinity for all $\theta \in \Theta$ as $k \to \infty$, while the cost-rate function $g_k(\xi)$ and its approximation $\gamma_k(\xi)$ remain bounded for all threshold functions $\xi$.
    We further show that, as $k$ increases, the gap between $g_k(\xio_k)$ and $g_k(\xia_k)$ vanishes, i.e., $ g_k(\xia_k) - g_k(\xio_k) \to 0$.
    The proof proceeds by establishing the convergence of each function appearing in the expression of $g_k$ in \eqref{eq:g_k} to its corresponding counterpart in $\gamma_k$ as defined in \eqref{eq:gamma_k}.
    \item We next show that, under the EOP threshold function $\xip_k$, the lifetime of an operating component also diverges, as $k \to \infty$.
    Specifically, let $\Tcal_k (\theta_1 )$ denote the lifetime of the first component under the sequence of EOP thresholds $\left\{\xip_k(\Ib_\tau) \right\}_{\tau=1}^{\Tcal_k (\theta_1)}$, where $k$ is the scaling parameter.
    Let $\xipklast (\theta_1)$ denote the random variable corresponding to the last EOP threshold at which the component is replaced by component 2, i.e., $\xipklast (\theta_1):= \xip_k \left(\Ib_{\Tcal_k (\theta_1)} \right)$.
    We show that $\Tcal_k (\theta_1 ) \to \infty$ as $k \to \infty$, which implies that for large $k$ the decision maker's error in estimating the true parameter $\theta_1$ from the degradation data becomes negligible.
    The same argument holds for all components $i \in \N$.
    We use this result together with the (weak) consistency of the estimator to prove that the cost-rate of replacing at $\xipklast(\theta_i)$, i.e., $g(\xipklast)$, converges to $g_k(\xio_k)$ as $k$ becomes large.
\end{enumerate}
While this section presents the main asymptotic results, the full mathematical proofs are detailed in the Appendix.

\subsection{Asymptotics of the Oracle's Optimal Thresholds}
Our construction of the Oracle’s approximate optimal threshold function $\xia$ relies on the asymptotic expansion of the overshoot random variable from classical renewal theory.
In our setting, this result takes the following form:
\begin{lemma} \citep[Theorems 2.6.1 and 2.6.2][]{Gut2009}
    \label{lemma:Y_1 asymptotics}
    For all $i \in \N$ and $\theta_i \in \Theta$, as $\xi(\theta_i) \rightarrow \infty$,
    \begin{enumerate}[(a)]
        \item $\P \{\YT_i(\xi(\theta_i)) \leq x\} \rightarrow \P\{Y(\theta_i)\leq x\}$ for all $x\geq 0$, \label{eq:limitprob}
        \item $\E [\YT_i(\xi(\theta_i))] \rightarrow \E[Y(\theta_i)]$. \label{eq:limitexpect}
    \end{enumerate}
\end{lemma}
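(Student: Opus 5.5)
This lemma is classical renewal theory: conditional on $\theta_i=\theta$, the increments $X_{i,\tau}$ are i.i.d. and nonnegative with mean $\mu(\theta)\in(0,\infty)$ and variance $\sigma^2(\theta)<\infty$ (Assumption~\ref{assum:MeanPositivity}). Hence $\{S_{i,\tau}\}_{\tau}$ is a renewal (ladder) process, $T(x,\theta)$ is the first passage time over level $x$, and $\YT_i(x)=S_{i,T(x,\theta)}-x$ is the overshoot (residual life at time $x$). The plan is therefore to condition on $\theta_i=\theta$, fix $\theta$, and apply the stationary excess (overshoot) limit theorems of \citet[Theorems 2.6.1 and 2.6.2]{Gut2009} with $x=\xi(\theta)\to\infty$. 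Since the statement is pointwise in $\theta$, no uniformity over $\Theta$ is needed here.

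\textbf{Part (a).} Write $\P\{\YT_i(x)>y\}=\int_{[0,x]}(1-F_X(x-u+y\mid\theta))\,dU(u)$, where $U$ is the renewal function of $F_X(\cdot\mid\theta)$. This is the standard renewal equation for the overshoot; the zero-mass part of $F_X$ only affects $U$, not the structure.

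\emph{Non-lattice case.} The function $u\mapsto 1-F_X(u+y\mid\theta)$ is monotone and integrable because $\mu(\theta)<\infty$. It is therefore directly Riemann integrable, and the key renewal theorem gives the limit $\mu(\theta)^{-1}\int_y^\infty(1-F_X(v\mid\theta))\,dv$. This is the tail of the first line of \eqref{eq:YProb}.

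\emph{$d$-lattice case.} Apply the lattice version of the key renewal theorem along $x\in d\N$, which gives the sum form of \eqref{eq:YProb}. Between lattice points the overshoot is determined by $\lfloor x/d\rfloor$, and the convention in \eqref{eq:YProb} (the $\lfloor x/d\rfloor-1$ upper index) must be matched to the definition $S_{T}>x$ with strict inequality. I expect this bookkeeping of the lattice offset to be the only fiddly point.

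\textbf{Part (b).} The same renewal equation gives $\E[\YT_i(x)]=\int_{[0,x]}\int_{x-u}^\infty(1-F_X(v\mid\theta))\,dv\,dU(u)$. The inner function $z\mapsto\int_z^\infty(1-F_X(v\mid\theta))\,dv$ is monotone, and it is integrable exactly when $\E[X^2\mid\theta]<\infty$, which holds by finite variance. The key renewal theorem then yields the limit $\E[X^2]/(2\mu)=(\sigma^2+\mu^2)/(2\mu)$. In the lattice case, summation instead of integration produces the extra $d/2$ term, as in \eqref{eq:EY}.

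The main point requiring care is justifying direct Riemann integrability, which is where the finite second moment enters. The alternative is uniform integrability of $\YT_i(x)$, obtained via Lorden's bound $\sup_x\E[\YT_i(x)^{1}]\le \E[X^2]/\mu$ plus convergence in distribution. Lorden's bound alone gives boundedness but not uniform integrability, so I would rely on the key renewal theorem route, as in Gut.
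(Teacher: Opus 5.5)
Your proposal is correct and follows the paper's route. The paper gives no separate argument: it conditions on $\theta_i=\theta$ and invokes Gut's overshoot limit theorems (Theorems 2.6.1 and 2.6.2), and your renewal-equation/key-renewal-theorem sketch (monotone integrable integrands, with $\E[X^2]<\infty$ supplying direct Riemann integrability in part (b)) is the standard argument behind those theorems. Your restriction to thresholds in $d\N$ in the lattice case is in fact necessary, not just bookkeeping: for general real thresholds the overshoot is shifted by the fractional part of $x/d$ and has no limit. The restriction is harmless here, because lattice thresholds can be replaced by multiples of $d$ without changing the policy.
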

We now present the convergence results for different functions used to construct $g_k$ in \eqref{eq:g_k}.

\begin{lemma}
    \label{lemma:elementwise_convergence}
    Let for the sequence of threshold functions $\left \{\xi^{(k)} \in \Xi_k \right\}_{k>0}$, we have $\lim_{k\to \infty} \xi^{(k)}(\theta)/k >0$ for all $\theta \in \Theta$. Then, as $k \to \infty$
    \begin{enumerate}[(a)]
        \item \label{lemma:elementwise_convergence_per_theta}For all $\theta \in \Theta, $ $
            \E \left[\displaystyle\sum_{\tau=1}^{T \left(\xi^{(k)}(\theta_1) ,\theta_1 \right)} \ell(S_{1,\tau},k) \Bigg\vert \theta_1=\theta \right] - r_k \left(\xi^{(k)}(\theta) ,\theta \right)  \to 0,
        $
        \item \label{lemma:elementwise_convergence_E_Theta}$
            \displaystyle \int_\Theta\E \left[\displaystyle\sum_{\tau=1}^{T \left(\xi^{(k)}(\theta_1) ,\theta_1 \right)} \ell(S_{1,\tau},k) \Bigg\vert \theta_1=\yb \right] dF_\theta(\yb) - \int_\Theta r_k \left(\xi^{(k)}(\yb) ,\yb \right) dF_\theta(\yb)  \to 0,
        $
        \item \label{lemma:elementwise_convergence_Prob}$
            \displaystyle\int_\Theta \P \left\{\YT_1(\xi^{(k)}(\theta_1)) > \phi_k \left(\xi^{(k)}(\theta_1) \right) \Big \vert \theta_1=\yb \right\} dF_\theta(\yb) - \displaystyle\int_\Theta \P \left\{Y(\yb) > \phi_k \left(\xi^{(k)}(\yb) \right) \right\} dF_\theta(\yb) \to 0
        $
        \item \label{lemma:elementwise_convergence_CycleLength}$
             \displaystyle \frac{1}{k} \int_\Theta \frac{\xi^{(k)}(\yb) + \E \left[\YT_1(\xi^{(k)}(\theta_1)) \Big \vert \theta_1 = \yb \right]}{\mu(\yb)}  dF_\theta(\yb) -  \frac{1}{k}\displaystyle\int_\Theta \frac{\xi^{(k)}(\yb) + \E \left[Y(\yb) \right]}{\mu(\yb)}  dF_\theta(\yb) \to 0
        $
    \end{enumerate}
    
\end{lemma}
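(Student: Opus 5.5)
The plan is to prove (a) pointwise for each fixed $\theta$ and then pass to (b)–(d) by dominated convergence over $\Theta$. Throughout, the hypothesis $\xi^{(k)}(\theta)/k\to c(\theta)>0$ guarantees $\xi^{(k)}(\theta)\to\infty$, so Lemma \ref{lemma:Y_1 asymptotics} applies along the sequence.

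\textbf{Part (a).} Write $x_k:=\xi^{(k)}(\theta)$ and treat the continuous case; the lattice case is identical with sums replacing integrals. The idea is a renewal-type decomposition of the cumulative operating cost. Since $\ell(\cdot,k)=\ellT(\cdot/k)$, we have
\[
\E\Big[\sum_{\tau=1}^{T(x_k,\theta)}\ell(S_{1,\tau},k)\Big]=\E[\ell(X_{1,1},k)]+\E\Big[\sum_{\tau=1}^{T-1}\E[\ell(S_{1,\tau}+X_{1,\tau+1},k)\mid S_{1,\tau}]\Big],
\]
by conditioning on $\F_\tau$ and using that $\{\tau<T\}=\{S_{1,\tau}\le x_k\}$ is known at $\tau$. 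The inner sum equals $\int_0^{x_k}\E[\ell(y+X,k)]\,dU(y)$, where $U$ is the renewal measure of $\{S_{1,\tau}\}_{\tau\ge1}$. The function $y\mapsto\E[\ell(y+X,k)]$ is nondecreasing and bounded by $\ellT(\LT)$, and it varies on scale $k$. Blackwell's theorem gives $U(y,y+h]\to h/\mu(\theta)$, and Lorden's/Gut's bound gives $U[0,y]\le y/\mu+C$. Splitting $[0,x_k]$ into $O(k/h)$ blocks of length $h$, the Riemann--Stieltjes sum against $U$ differs from that against $dy/\mu$ by at most the total variation of the monotone integrand ($\le\ellT(\LT)$) times $\sup_y|U(y,y+h]-h/\mu|$, plus boundary terms. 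The boundary terms are controlled by $\ell(x)/x\to0$ near $0$ (A.\ref{assum:ell0}) and by boundedness near $x_k$. The main obstacle is that Blackwell's theorem only controls the blocks asymptotically, while the integrand lives on scale $k$ and need not vanish. I would therefore use a uniform error bound: for the first $m$ blocks the integrand is at most $\ellT(mh/k)\to0$, and for the remaining blocks $|U(y,y+h]-h/\mu|\le\varepsilon h$. The discrepancy is then at most $\varepsilon\,\ellT(\LT)\,x_k/\mu$, which is $O(k\varepsilon)$. This is not $o(1)$, so the bound must be sharpened. I would exploit monotonicity: write $\E[\ell(y+X,k)]=\int \mathbf 1\{y>z\}\,d\bar\ell_k(z)$ and interchange integrals, so the error becomes $\int (U(z,x_k]-(x_k-z)/\mu)\,d\bar\ell_k(z)$. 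By the key renewal theorem (second-order renewal), $U[0,y]-y/\mu\to\E[Y]/\mu-1$ or a similar constant, so the bracket is bounded and tends to a constant difference $\E[\tilde Y$-type$]$ terms that cancel up to $o(1)$. This cancellation is exactly where A.\ref{assum:ell0} is needed, since it makes the mass of $d\bar\ell_k$ near $z=0$ negligible.

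\textbf{Parts (b)–(d).} For (b), I would dominate the integrand by $\ellT(\LT)(1+\E[T])$ with $\E[T]\le (x_k+\E\tilde Y)/\mu\le (k\LT+C\sigma^2/\mu+\mu)/\mu$. This domination grows with $k$, so I would instead bound the difference in (a) uniformly by a constant times $1+\sigma^2/\mu^2$, using the Lorden bound $\E\tilde Y\le(\sigma^2+\mu^2)/\mu$, and then apply A.\ref{assum:converge} with dominated convergence. For (c), Lemma \ref{lemma:Y_1 asymptotics}(a) together with the continuity of the limit $\P\{Y\le\cdot\}$ gives uniform convergence of the distribution functions (Pólya). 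Hence $|\P\{\tilde Y>\phi_k\}-\P\{Y>\phi_k\}|\to0$ pointwise, and the integrand is bounded by $1$, so dominated convergence applies. For (d), the $\xi^{(k)}$ terms cancel exactly, leaving $\frac1k\int(\E\tilde Y-\E Y)/\mu\,dF_\theta$. This term is at most $\frac1k\int 2(\sigma^2+\mu^2)/\mu^2\,dF_\theta$ (plus $d/\mu$ in the lattice case), which is finite by A.\ref{assum:converge}, so it tends to $0$ even without using Lemma \ref{lemma:Y_1 asymptotics}(b).
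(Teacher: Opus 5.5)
Your proof is correct. Once the Blackwell block attempt is dropped, your treatment of (a)--(b) is the paper's argument in different clothing. Your layer-cake rewrite of the error as $\int (U(z,x_k]-(x_k-z)/\mu)\,d\bar\ell_k(z)$ is exactly the paper's integration by parts against $dw_k$. The bracket equals $U_c(x_k)-U_c(z)$, where $U_c$ is the second-order renewal remainder, bounded by Lorden's inequality and vanishing at infinity. Part (b) then follows, as in the paper, by dominating the error with a multiple of $\ellT(\LT)\,\E[Y(\theta)]/\mu(\theta)$ and invoking A.7.

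You should make the split explicit: either at a fixed $z_0$ with $\sup_{z\ge z_0}\vert U_c(z)\vert<\varepsilon$, or at $ks_k$ as the paper does. The mass $w_k(z_0)=\E[\ellT((X+z_0)/k)]$ of $dw_k$ on $[0,z_0]$ vanishes simply because $\ellT$ is continuous with $\ellT(0)=0$. So the condition $\ell(x)/x\to 0$ in A.8 is not actually needed, neither by you nor by the paper.

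Where you depart from the paper, you improve on it. In (c) the evaluation point $\phi_k(\xi^{(k)}(\theta))$ moves with $k$, so the pointwise statement of Lemma~\ref{lemma:Y_1 asymptotics} does not suffice by itself. Your P\'olya step supplies the uniformity that the paper's one-line bounded-convergence argument leaves implicit. In the $d$-lattice case the limit distribution function is a step function, so replace P\'olya by Scheff\'e's theorem applied to the mass functions on the common lattice.

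In (d), you observe that the $\xi^{(k)}$ terms cancel exactly and the remainder is $O(1/k)$ by Lorden's inequality and A.7. This is more direct than the paper's dominated-convergence route and, as you say, does not need Lemma~\ref{lemma:Y_1 asymptotics}(b).
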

Lemma \ref{lemma:elementwise_convergence} indicates that as the scaling parameter $k$ increases, each function in \eqref{eq:g_k} converges to its counterpart in \eqref{eq:gamma_k}, when the threshold functions scale at least linearly with respect to $k$.
Intuitively, this leads to the convergence of the gap between $g_k$ and $\gamma_k$, which is formalized in the next lemma.

\begin{lemma}
    \label{lemma:gasymptotic}
    For the sequence of $\left \{\xi^{(k)} \in \Xi_k \right\}_{k>0}$ that, for all $\theta \in \Theta$, satisfies $\lim_{k\to\infty} \xi^{(k)}(\theta)/k>0$ 
    \[
        \lim_{k \to \infty} \left(g_k \left(\xi^{(k)} \right) - \gamma_k \left(\xi^{(k)} \right) \right)=0.
    \]
\end{lemma}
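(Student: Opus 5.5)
We prove the statement by writing $g_k$ and $\gamma_k$ as ratios with the same structure and then applying Lemma \ref{lemma:elementwise_convergence} term by term, after normalising numerators and denominators by $k$. Write $g_k(\xi^{(k)}) = N_k/D_k$ and $\gamma_k(\xi^{(k)}) = \tilde N_k/\tilde D_k$. Here $N_k$ and $\tilde N_k$ are the numerators in \eqref{eq:g_k} and \eqref{eq:gamma_k}, and $D_k$ and $\tilde D_k$ are the corresponding denominators. We then consider $n_k := N_k/k$, $\tilde n_k := \tilde N_k/k$, $d_k := D_k/k$ and $\tilde d_k := \tilde D_k/k$, so that $g_k - \gamma_k = n_k/d_k - \tilde n_k/\tilde d_k$.

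\textbf{Step 1: numerator difference.} Because $c_p(k)=k\cT_p$ cancels,
\[
n_k-\tilde n_k = \cT_f\,(P_k-\tilde P_k) + \tfrac{1}{k}(R_k-\tilde R_k).
\]
Here $P_k-\tilde P_k$ is the difference of integrated probabilities in Lemma \ref{lemma:elementwise_convergence}\ref{lemma:elementwise_convergence_Prob}, which tends to $0$. The term $R_k-\tilde R_k$ is the difference of integrated operating costs in Lemma \ref{lemma:elementwise_convergence}\ref{lemma:elementwise_convergence_E_Theta}. That difference already tends to $0$, so after division by $k$ it tends to $0$ as well. Hence $n_k-\tilde n_k\to 0$.

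\textbf{Step 2: denominator difference.} Lemma \ref{lemma:elementwise_convergence}\ref{lemma:elementwise_convergence_CycleLength} gives $d_k-\tilde d_k\to 0$ directly.

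\textbf{Step 3: bounds on the normalised terms.} We need $\tilde d_k$, and hence also $d_k$, to stay bounded away from zero, and we need $\tilde n_k$ to stay bounded.

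For the lower bound we use the hypothesis $\lim_k \xi^{(k)}(\theta)/k =: a(\theta)>0$. By Fatou's lemma,
\[
\liminf_k \tilde d_k \ge \int_\Theta a(\yb)/\mu(\yb)\,dF_\theta(\yb) > 0.
\]
This integral is strictly positive because $a>0$ and $\mu<\infty$ everywhere. It is also finite: $\xi^{(k)}\le k\LT$ gives $a(\yb)\le\LT$, and Assumption \ref{assum:converge}(a) then bounds the integral. Therefore $d_k \ge c>0$ for all large $k$ by Step 2.

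For the upper bounds, $\tilde d_k \le \LT\int_\Theta 1/\mu\,dF_\theta + k^{-1}\int_\Theta \E[Y(\yb)]/\mu(\yb)\,dF_\theta(\yb)$. By \eqref{eq:EY}, $\E[Y]/\mu = \tfrac12(\sigma^2/\mu^2+1)$, plus $d/(2\mu)$ in the lattice case, and this is integrable by Assumption \ref{assum:converge}. For the numerator, $\tilde n_k \le \cT_p+\cT_f + r_k/k$. By \eqref{eq:rkDef}, $r_k(x,\theta)\le \ellT(\LT)\,(1+(x+d)/\mu(\theta))$, and $x\le k\LT$ together with Assumption \ref{assum:converge}(a) keeps $\tilde n_k$ bounded.

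\textbf{Step 4: conclusion.} We use the identity
\[
\frac{n_k}{d_k}-\frac{\tilde n_k}{\tilde d_k} = \frac{n_k-\tilde n_k}{d_k} + \frac{\tilde n_k(\tilde d_k-d_k)}{d_k\tilde d_k}.
\]
By Steps 1--3, both terms tend to $0$, which proves $g_k(\xi^{(k)})-\gamma_k(\xi^{(k)})\to 0$.

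The main obstacle is Step 3. The differences themselves are handled by the preceding lemma, but converting them into a difference of ratios requires uniform control, through Assumption \ref{assum:converge}, so that the normalised denominators stay bounded away from zero and the normalised numerators stay finite. In particular, the lower bound relies on the limit $a(\theta)$ being positive pointwise, and Fatou's lemma is the tool for that. If the limit is intended only as a $\liminf$, the same Fatou argument still applies.
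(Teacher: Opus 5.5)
Your proof is correct and follows the paper's route: you normalize the numerators and denominators of $g_k$ and $\gamma_k$ by $k$ and then apply Lemma~\ref{lemma:elementwise_convergence} term by term. The one difference is the closing step, and yours is the more rigorous one. The paper takes the ratio of the limits of the normalized numerator and denominator, which implicitly assumes those limits exist. That can fail, for example for the failure-probability term when $\xi^{(k)}(\theta)/k\to\LT$ while $\phi_k(\xi^{(k)}(\theta))$ oscillates. Your difference-of-ratios identity, together with the Fatou lower bound and the upper bounds from Assumption~\ref{assum:converge}, needs no such existence.
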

Lemmas \ref{lemma:elementwise_convergence} and \ref{lemma:gasymptotic} apply only to threshold functions whose growth is asymptotically linear in $k$.
A natural question is whether the minimizers of $g_k$ and $\gamma_k$, i.e.,
\[
    \xio_k \in \arg \min\limits_{\xi \in \Xi_k} g_k \left(\xi \right), \qquad \mbox{and}, \qquad \xia_k \in \arg \min\limits_{\xi \in \Xi_k} \gamma_k \left(\xi \right),
\]
fall into this class.
The following lemma establishes that both threshold functions indeed satisfy this property.

\begin{lemma}
    \label{lemma:threshold_order}
    For $\theta \in \Theta$
    \begin{enumerate}[(a)]
        \item \label{lemma:threshold_order_xi} $\lim_{k \to \infty} \xio_k(\theta)/k \in (0,\LT ]$,
        \item \label{lemma:threshold_order_xiT} $\lim_{k \to \infty} \xia_k(\theta)/k \in (0,\LT ]$,
    \end{enumerate}
\end{lemma}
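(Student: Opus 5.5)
We argue by contradiction, separately for $\xia_k$ and $\xio_k$; the upper bound $\LT$ is immediate since $\xi\in\Xi_k$ takes values in $[0,L(k)]=[0,k\LT]$. The plan is to show that any subsequence along which $\xi_k(\theta)/k\to 0$ (for a set of $\theta$ of positive $F_\theta$-measure) has cost-rate eventually exceeding that of a fixed linear benchmark, contradicting optimality. Since the statement asserts that the limit exists, we also need existence; we intend to obtain it from part (c) of Theorem \ref{thm:gamma*}. The plan is first to rescale. For $\xi(\theta)=k u(\theta)$ with $u:\Theta\to[0,\LT]$, the quantities $r_k(ku(\theta),\theta)/k$ converge (by continuity of $\ellT$ and $\ell(x,k)=\ellT(x/k)$) to $\int_0^{u}\ellT(y)\,dy/\mu(\theta)$. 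The failure probability $\P\{Y(\theta)>k(\LT-u)\}$ tends to $0$ whenever $u<\LT$. The denominator divided by $k$ tends to $\int u/\mu\,dF_\theta$, using Assumption \ref{assum:converge} and dominated convergence. Hence $\gamma_k(ku)$ converges to the limiting functional
\[
\Gamma(u)=\frac{\cT_p+\int_\Theta \mu(\yb)^{-1}\int_0^{u(\yb)}\ellT(y)\,dy\,dF_\theta(\yb)}{\int_\Theta u(\yb)/\mu(\yb)\,dF_\theta(\yb)},
\]
possibly with a $\cT_f$ contribution where $u=\LT$. By Lemma \ref{lemma:gasymptotic}, the same limit holds for $g_k(ku)$ whenever $u>0$.

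\textbf{Step 1 (linear benchmark and boundedness).} Fix a constant $u_0\in(0,\LT)$ with $\Gamma(u_0)<\infty$. This gives $\limsup_k g_k(\xio_k)\le \limsup_k g_k(ku_0)=\Gamma(u_0)<\infty$, and likewise for $\gamma_k(\xia_k)$. The numerator is at least $c_p(k)=k\cT_p$, and the denominator divided by $k$ is bounded by $\int(\LT+\E[Y])/\mu\,dF_\theta<\infty$ by Assumption \ref{assum:converge}. Therefore the cost-rates are also bounded below by a positive constant.

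\textbf{Step 2 (thresholds cannot be sublinear).} Suppose that along a subsequence $\xia_k(\theta)/k\to0$ on a set $B$ with $F_\theta(B)>0$. We then compare with the modified threshold that is raised to $ku_0$ on $B$ (when $u_0$ exceeds the original value). This shifts roughly $k u_0 F$-mass into the denominator while adding only $k\int_0^{u_0}\ellT/\mu$ to the numerator. The threshold structure of Theorem \ref{thm:gamma*}(c) gives a cleaner route. The function $Dh_k(\gamma_k^*,x,\theta)$ is non-decreasing in $x$, so $\xia_k(\theta)$ is the first root of
\[
c_f(k)\big(1-F_X(\phi_k(x)\mid\theta)\big)+\E[\ellT((x+X)/k)]-\gamma_k^*=0 .
\]
For $x=o(k)$, the failure term is $o(1)$ by Chebyshev, because $\phi_k(x)\approx k\LT$ and $c_f(k)=k\cT_f$ are multiplied by the tail $\P\{X>k\LT/2\}\le 4\sigma^2/(k\LT)^2$. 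The $\ellT$ term tends to $0$ by Assumption \ref{assum:ell0}. Hence $Dh_k<0$ there because $\gamma_k^*$ is bounded below by a positive constant (Step 1). So the root satisfies $\xia_k(\theta)\ge \epsilon k$ for a fixed $\epsilon>0$ depending on $\liminf\gamma_k^*$ and $\ellT$. Existence of the limit $\xia_k/k$ follows by noting that $Dh_k(\lambda,ku,\theta)\to \ellT(u)-\lambda$ for $u<\LT$, so $\xia_k/k\to\inf\{u:\ellT(u)\ge\gamma^\infty\}\wedge\LT$, where $\gamma_k^*\to\gamma^\infty$ is the limiting fixed point of $\Gamma$ in the sense of Theorem \ref{thm:gamma*}(a).

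\textbf{Step 3 (Oracle thresholds).} For $\xio_k$ there is no explicit derivative characterization, so this is the main obstacle. We plan to use the exact renewal-reward comparison instead. If $\xio_k(\theta)=o(k)$ on $B$, then by Wald's identity the expected cycle length of components in $B$ is $o(k)$, while each such cycle still costs at least $c_p(k)=k\cT_p$. Using Lemma \ref{lemma:Y_1 asymptotics}, and bounding the overshoot uniformly through $\E[\YT]\le \E[X^2]/\mu$, the per-unit-time cost on $B$ then diverges. Replacing the threshold on $B$ by $ku_0$ therefore strictly lowers $g_k$ for large $k$, since by Lemma \ref{lemma:elementwise_convergence} the other integrals are unaffected up to $o(1)$. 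This contradicts optimality. The limit of $\xio_k/k$ then exists and coincides with that of $\xia_k/k$, because $g_k-\gamma_k\to0$ uniformly over linear thresholds (Lemma \ref{lemma:gasymptotic}) and $\Gamma$ has a unique pointwise minimizer, as determined in Step 2.
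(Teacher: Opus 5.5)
The gap is in Step 3, i.e.\ part (a). The lemma is a statement for each fixed $\theta$. Your contradiction hypothesis, however, is that $\xio_k=o(k)$ along a common subsequence on a set $B$ with $F_\theta(B)>0$, and you refute it through global optimality of the ratio $g_k$. This has three problems.
\begin{enumerate}
\item Because $g_k$ sees a threshold function only through $F_\theta$-integrals, such an argument says nothing about an individual $\theta$ or any null set. Yet negating the lemma only gives you an individual $\theta$ with $\liminf_k\xio_k(\theta)/k=0$.
\item Even if this happens on a positive-measure set, the subsequences depend on $\theta$ and need not be extractable in common. At best, a global comparison of this kind yields $F_\theta\{\yb:\xio_k(\yb)<\delta k\}\to0$ for small $\delta$. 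That is a lower bound in measure, and it implies neither the pointwise nor the almost-everywhere statement.
\item The improvement step is not justified as stated, because a diverging cost rate on $B$ is not what makes the modification profitable. Raising the threshold on $B$ to $ku_0$ lowers $g_k$ if and only if the added numerator is smaller than $g_k(\xio_k)$ times the added denominator. Since the old cost on $B$ may be as small as $c_p(k)F_\theta(B)$, this is guaranteed asymptotically only when $\int_0^{u_0}\ellT(y)\,dy<u_0\liminf_k g_k(\xio_k)$. So $u_0$ must be small, not an arbitrary value from Step 1.
\end{enumerate}

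The missing idea is that the Oracle threshold does have a per-$\theta$ characterization; it is value-based rather than derivative-based. By the parametric reduction behind Theorem \ref{thm:lambda*}, $\xio_k=\xiT_{g_k^*}$. Hence for every $\theta$, the number $\xio_k(\theta)$ minimizes $x\mapsto\hT(g_k^*,x,\theta)$ over $[0,k\LT]$. The paper's proof rests on exactly this, and in essence runs as follows.
\begin{itemize}
\item Fix $\theta$ and suppose $\xio_k(\theta)/k\to0$ along a subsequence. Then the failure probability, the operating cost divided by $k$, and the cycle length divided by $k$ all vanish, so $\hT(g_k^*,\xio_k(\theta),\theta)/k\to\cT_p$.
\item For $x=ku$, one has $\limsup_k\hT(g_k^*,ku,\theta)/k\le\cT_p+u(\ellT(u)-\underline g)/\mu(\theta)$, where $\underline g>0$ is your Step 1 lower bound. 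This is strictly below $\cT_p$ once $u$ is small enough that $\ellT(u)<\underline g$.
\item This contradicts the minimality of $\xio_k(\theta)$.
\end{itemize}
This is the value-comparison counterpart of your first-root lower bound for $\xia_k$ in Step 2. That Step 2 argument is correct and is a legitimate alternative to the paper's route for part (b).

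Finally, your arguments for existence of the limit are not sound as written. The convergence $\gamma_k^*\to\gamma^\infty$ is asserted without proof. Moreover, if $\ellT\equiv\gamma^\infty$ on an interval, then neither the first root of $Dh_k$ nor a minimizer of $\Gamma$ is pinned down, so $\Gamma$ has no unique minimizer. Note, however, that the paper's proof also only establishes $\liminf_k\xio_k(\theta)/k>0$, with the bound $\LT$ being trivial.
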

We can now apply Lemma \ref{lemma:gasymptotic} and \ref{lemma:threshold_order} to establish the following convergence result for the cost-rates of $g_k$ and $\gamma_k$.
\begin{theorem}
    \label{thm:costconvergancenolearning}
    In the scaling regime, the approximated optimal threshold function $\xia$ performs as well as the optimal threshold function $\xio$ and the approximated optimal cost-rate $\gamma(\xia)$ converges to the actual optimal cost-rate $g(\xio)$, i.e. as $k \to \infty$, 
    \begin{enumerate}[(a)]
        \item $g_k (\xia_k) - g_k (\xio_k) \to 0$. \label{thm:optimalitygap}
        \item $  g_k (\xia_k) - \gamma_k (\xia_k)  \to 0$. \label{thm:predictiongap}
    \end{enumerate}
\end{theorem}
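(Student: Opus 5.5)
The proof is a sandwich argument built from Lemma \ref{lemma:gasymptotic} and Lemma \ref{lemma:threshold_order}, together with the optimality of $\xio_k$ for $g_k$ and of $\xia_k$ for $\gamma_k$. By Lemma \ref{lemma:threshold_order}, both sequences $\{\xio_k\}$ and $\{\xia_k\}$ satisfy $\lim_{k\to\infty}\xi^{(k)}(\theta)/k>0$ for every $\theta\in\Theta$. Hence Lemma \ref{lemma:gasymptotic} applies to each of them and gives
\[
g_k(\xia_k)-\gamma_k(\xia_k)\to 0, \qquad g_k(\xio_k)-\gamma_k(\xio_k)\to 0 .
\]
The first of these is exactly part \ref{thm:predictiongap}.

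For part \ref{thm:optimalitygap}, we decompose the gap as
\[
0\le g_k(\xia_k)-g_k(\xio_k) = \bigl[g_k(\xia_k)-\gamma_k(\xia_k)\bigr] + \bigl[\gamma_k(\xia_k)-\gamma_k(\xio_k)\bigr] + \bigl[\gamma_k(\xio_k)-g_k(\xio_k)\bigr].
\]
The lower bound $0$ holds because $\xio_k$ minimizes $g_k$ over $\Xi_k$ and $\xia_k\in\Xi_k$. The middle bracket is $\le 0$ because $\xia_k$ minimizes $\gamma_k$ over $\Xi_k$ and $\xio_k\in\Xi_k$. The first and third brackets tend to zero by the display above. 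Therefore
\[
0\le g_k(\xia_k)-g_k(\xio_k)\le o(1),
\]
which proves \ref{thm:optimalitygap}. As a by-product, this also yields $\gamma_k(\xia_k)-g_k(\xio_k)\to 0$, i.e., the approximate optimal cost-rate converges to the true one.

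The main point needing care is well-posedness. The minimizers $\xio_k$ and $\xia_k$ must exist in $\Xi_k$; for $\xio_k$ this is Theorem \ref{thm:ExistNStrucOPTHetero}, and for $\xia_k$ it follows from Theorem \ref{thm:gamma*}\ref{thm:gamma*_threshold}. In addition, the limits in Lemma \ref{lemma:threshold_order} must hold in the form required by Lemma \ref{lemma:gasymptotic}, namely pointwise in $\theta$ with a strictly positive limit.

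If Lemma \ref{lemma:threshold_order} only gave a $\liminf$, I would apply the sandwich along arbitrary subsequences on which $\xi^{(k)}(\theta)/k$ converges, and conclude by the subsequence principle. This works because the statement concerns real sequences. The remaining analytic difficulty, interchanging limits and integrals over $\Theta$, is already absorbed into Lemma \ref{lemma:gasymptotic}, so no further estimates are needed here.
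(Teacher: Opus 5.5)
Your proposal is correct and is essentially the paper's proof. The paper also uses Lemma \ref{lemma:threshold_order} to make Lemma \ref{lemma:gasymptotic} applicable to both $\xio_k$ and $\xia_k$, and reads off part (b) directly. It obtains part (a) from the same sandwich $0\le g_k(\xia_k)-g_k(\xio_k)\le [g_k(\xia_k)-\gamma_k(\xia_k)]+[\gamma_k(\xio_k)-g_k(\xio_k)]$, using the optimality of each minimizer, only written with $\epsilon/2$ bounds.

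Your fallback subsequence remark is unnecessary, since Lemma \ref{lemma:threshold_order} as stated gives a genuine limit. As written it would also need care: with uncountably many $\theta$, a further subsequence along which $\xi^{(k)}(\theta)/k$ converges for every $\theta$ need not exist.
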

Theorem \ref{thm:costconvergancenolearning}\ref{thm:optimalitygap} shows that the optimality gap resulting from the use of the threshold function $\xia_k$ in place of the optimal threshold $\xio_k$ vanishes as $k$ increases.
Part \ref{thm:predictiongap} shows that if $g_k \left(\xia_k\right)$ is predicted by $\gamma_k \left(\xia_k \right)$, the prediction gap tends to zero for sufficiently large $k$.

\subsection{Asymptotic Analysis of the EOP Thresholds}
We begin our analysis with the first component; the same reasoning applies to any operating component.
By Lemma \ref{lemma:threshold_order} together with Equations \eqref{eq:xiH_k} one can verify that $\xip_k(\Ib_\tau)$ diverge as $k$ increases, i.e., for all $\tau\in \N_0$,
\begin{equation}
    \label{eq:xiH_k_lim}
    \lim_{k\to \infty}\xip_k(\Ib_\tau) =  \lim_{k\to \infty}\xia_k \left(\thetaH(\Ib_\tau) \right)  = \infty, \qquad \mbox{almost surely.}
\end{equation}
By combining the divergence result in \eqref{eq:xiH_k_lim} with the strong law for counting processes \citep[Theorem 2.5.1 in][]{Gut2009}, we obtain that, for the true parameter $\theta_1 \in \Theta$,
\begin{equation}
    \label{eq:Tcal_lim}
    \lim_{k \to \infty}\Tcal_k(\theta_1)=\infty \qquad \textit{almost surely.}
\end{equation}
Equation \eqref{eq:Tcal_lim} implies that the number of observations also diverges as $k$ approaches infinity.
Consequently, the errors in estimating the true parameter, $\theta_1$, by the point estimators $\thetaH(\Ib_\tau)$ vanish due to consistency.
The following lemmas state this observation in detail and establish the corresponding convergence results for the EOP thresholds, the total operating cost, and the probability of failure at replacement as $k$ becomes large.
% We define the last estimator of $\theta$ prior to replacement under the scaling parameter $k$ as $\thetaH_{(k)}:= \thetaH_{\Tcal_k\left( \theta \right)}$.
Let $\thetaH(\Ib_\tau) \xrightarrow{p} \theta_1$, for $\theta_1 \in \Theta$.

\begin{lemma}
    \label{lemma:PointEstimateConsistency}
    As $k \to \infty$,
    \begin{enumerate}[(a)]
        \item \label{lemma:PointEstimateConsistency_theta} For $\theta_1 \in \Theta$, $\thetaH \left(\Ib_{\Tcal_k(\theta_1)} \right) \xrightarrow{p} \theta_1$;
        \item \label{lemma:PointEstimateConsistency_xi} For $\theta_1 \in \Theta$, $\dfrac{1}{k} \xipklast(\theta_1) \xrightarrow{p} \dfrac{1}{k}\xia_k(\theta_1)$;
        \item \label{lemma:PointEstimateConsistency_Exi} $\dfrac{1}{k} \displaystyle \int_\Theta \frac{1}{\mu(\yb)} \left(\xipklast(\yb)+ \E \left[ Y(\xipklast \left(\yb) \right)  \right]\right)dF_\theta(\yb) \xrightarrow{p} \dfrac{1}{k} \displaystyle \int_\Theta \frac{1}{\mu(\yb)} \left(\xia_k \left(\yb \right) +\E \left[Y \left(\xia_k \left(\yb \right) \right) \right]\right) dF_\theta(\yb)$;
        \item \label{lemma:PointEstimateConsistency_ell} $\dfrac{1}{k} \displaystyle \int_\Theta r_k \left(\xipklast(\yb), \yb \right) dF_\theta(\yb) \xrightarrow{p} \dfrac{1}{k} \displaystyle \int_\Theta r_k \left(\xia_k(\yb) , \yb \right) dF_\theta(\yb)$;
        \item \label{lemma:PointEstimateConsistency_prob} $\displaystyle \int_\Theta \P \left\{Y(\yb)>\phi_k \left(\xipklast (\yb) \right) \right\} dF_\theta(\yb) \xrightarrow{a.s.} \displaystyle \int_\Theta \P \left\{Y(\yb)>\phi_k \left(\xia_k(\yb) \right) \right\} dF_\theta(\yb)$.
    \end{enumerate}
\end{lemma}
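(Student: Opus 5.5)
The natural route is to prove the five parts in order: (a) is a random-index consistency statement, (b) transfers it to the scaled thresholds by a continuous-mapping argument, and (c)--(e) follow from (b) by integrating over $\Theta$ with dominated convergence. Throughout, fix the true parameter $\theta_1$ and write $\hat\theta_\tau:=\thetaH(\Ib_\tau)$.

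\textbf{Part (a).} The difficulty is that $\Tcal_k(\theta_1)$ is a stopping time that depends on the estimates themselves. Weak consistency at deterministic ages therefore does not immediately transfer to the random age. I would argue Anscombe-style. First, by \eqref{eq:xiH_k_lim}, Lemma \ref{lemma:threshold_order} and the renewal strong law (Theorem 2.5.1 of Gut), there are constants $0<a<b<\infty$ with
\[
\P\{ak\le \Tcal_k(\theta_1)\le bk\}\to1 .
\]
The lower bound holds because every threshold exceeds $k\underline{x}$ for some $\underline{x}>0$ with high probability. The upper bound holds because $\xip_k\le L(k)=k\LT$, so $\Tcal_k(\theta_1)\le T(k\LT,\theta_1)$, which is of order $k\LT/\mu(\theta_1)$.

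It then suffices to show $\P\{\sup_{ak\le\tau\le bk}\Vert\hat\theta_\tau-\theta_1\Vert>\epsilon\}\to0$. This is the step I expect to be the main obstacle. Bare weak consistency does not give this uniform-in-window control, so the argument needs an extra ingredient: either (i) Anscombe's uniform continuity in probability condition on $\{\hat\theta_\tau\}$, or (ii) the structure of the estimator.
\begin{itemize}
\item[(ii)] For posterior means of the type \eqref{eq:posteriorMean_CP_geom}, $\hat\theta_\tau$ is a continuous function of the running averages $\tau^{-1}\sum_{j\le\tau}M_j$ and $S_\tau/\tau$. These averages converge almost surely by the SLLN, which yields the supremum bound directly.
\end{itemize}
I would state the argument under (ii) and note that (i) covers the general case.

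\textbf{Part (b).} I would characterise the limit of $\xia_k(\theta)/k$ through Theorem \ref{thm:gamma*}\ref{thm:gamma*_threshold}. Dividing $Dh$ by $k$ gives
\[
\cT_f\bigl(1-F_X(\phi_k(x)\mid\theta)\bigr)+\E\bigl[\ellT\bigl((x+X_{1,1})/k\bigr)\mid\theta\bigr]-\gamma_k^*/k .
\]
With $x=ky$, the middle term converges to $\ellT(y)$. This shows that $\xia_k(\theta)/k\to x^*(\theta)$, where $x^*(\theta)$ is the smallest root of the limiting monotone equation, capped at $\LT$; this is consistent with Lemma \ref{lemma:threshold_order}.

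By A.2 and A.6, the limiting map is continuous in $\theta$, and the convergence is locally uniform on compacts of $\Theta$. Combining (a) with the continuous mapping theorem then gives
\[
\xipklast(\theta_1)/k=\xia_k(\hat\theta_{\Tcal_k})/k\xrightarrow{p} x^*(\theta_1),
\]
and the same limit holds for $\xia_k(\theta_1)/k$.

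\textbf{Parts (c)--(e).} These integrate (b) over $\Theta$.
\begin{itemize}
\item For (c), the integrand is $\mu(\yb)^{-1}\bigl(\xi/k+\E[Y(\yb)]/k\bigr)$. It is dominated by $\mu(\yb)^{-1}\LT+\E[Y(\yb)]/\mu(\yb)$, which is integrable by A.7 and \eqref{eq:EY}. Dominated convergence in $L^1(F_\theta)$, applied to the in-probability statement, gives the result.
\item For (d), $r_k(x,\yb)/k$ equals $\mu(\yb)^{-1}\int_0^{x/k}\E[\ellT(u+X/k)]\,du$ up to an $O(1/k)$ term. It is bounded by $\ellT(\LT)(\LT+1)/\mu(\yb)$ and is Lipschitz in $x/k$, so the same dominated convergence argument applies.
\item For (e), I would use the root characterisation again. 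It shows that $\phi_k(\xia_k(\yb))$ converges to a quantile-type limit determined by $\cT_f(1-F_X(\phi\mid\yb))=\lambda^*-\ellT(x^*(\yb))$. Continuity of $\P\{Y(\yb)>\cdot\}$ (from \eqref{eq:YProb} and A.4), together with the bound of $1$ on the integrand, then gives the convergence.
\end{itemize}
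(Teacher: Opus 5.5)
Your concern about part (a) is well founded, but parts (b) and (e) rest on a mis-scaled limit equation, and this is a genuine gap. In the scaling regime $c_f(k)=k\cT_f$, while $\ell(\cdot,k)=\ellT(\cdot/k)$ and $\gamma_k^*$ stay bounded (e.g.\ by $\gamma_k$ at the constant threshold $L(k)$). Hence
\[
Dh=k\cT_f\bigl(1-F_X(\phi_k(x)\mid\theta)\bigr)+\E\bigl[\ellT\bigl((x+X_{1,1})/k\bigr)\mid\theta\bigr]-\gamma_k^* .
\]
Your display divides the first and last terms by $k$ but not the middle one. Taken literally, its limit at $x=ky$, $y<\LT$, is $\ellT(y)$, whose smallest root is $y=0$; this contradicts Lemma \ref{lemma:threshold_order}.

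The informative normalization is no division at all. For $y<\LT$, $k\P\{X_{1,1}>k(\LT-y)\mid\theta\}\to0$ by A.5, so the limit equation is $\ellT(y)=\lambda^*$, with $\lambda^*$ a subsequential limit of $\gamma_k^*$. This equation does not depend on $\theta$. Your appeal to A.2 and A.6 for continuity is therefore vacuous. All the weight of (b) falls on the claim that $\xia_k(\theta')/k\to x^*$ uniformly for $\theta'$ near $\theta_1$, which you assert without proof. That claim needs existence of $\lambda^*$, or a sub-subsequence argument. It also fails if $\ellT$ is constant at level $\lambda^*$ on an interval, because the smallest root is then unstable.

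The paper avoids identifying a limit. It uses A.4 to bound the width of $\mathcal{D}_{\lambda,\theta}$ by $\xf$, so $\xia_k(\cdot)$ is continuous up to an additive $\xf$, which vanishes after division by $k$. (The paper's neighborhood still has to be chosen uniformly in $k$, which it asserts rather than proves.)

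The same mis-scaling breaks (e): $\phi_k(\xia_k(\yb))$ does not converge to a finite quantile. At the root, $k\cT_f\bigl(1-F_X(\phi_k(\xia_k(\yb))\mid\yb)\bigr)\le\gamma_k^*=O(1)$. Since A.4 forces $F_X(z\mid\yb)<1$ for every $z$, the safety margin diverges, so both sides of (e) tend to $0$; this is what the paper shows via comparison with $\ell\equiv0$. What you actually need is $\phi_k(\xipklast(\yb))\to\infty$. That follows from this divergence holding uniformly over a compact neighborhood of $\yb$ (by A.2), combined with (a). Continuity of $\P\{Y(\yb)>\cdot\}$ plays no role. Note also that (e) is stated almost surely. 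Your chain through (b) gives only convergence in probability, unless you use the almost-sure version available under your hypothesis (ii).

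On (a), you have put your finger on a real weakness in the paper. The paper conditions on $\Tcal_k(\theta_1)$ and then applies the fixed-$\tau$ bound $\P\{\Vert\thetaH(\Ib_\tau)-\theta\Vert>\epsilon\}<\eta$. This tacitly treats a data-dependent stopping time as independent of the estimator path, and its $k_{\Tcal^l}$ is random. Your Anscombe/SLLN argument is a legitimate repair. However, Anscombe's uniform continuity in probability is not implied by weak consistency, so both (i) and (ii) are extra hypotheses. You should state (a) under them rather than claim that (i) covers the general case.

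Given (b), your treatment of (c) and (d) is fine. For (d), your Lipschitz bound $\frac1k\vert r_k(x,\yb)-r_k(x',\yb)\vert\le\ellT(\LT)\vert x-x'\vert/(k\mu(\yb))$ is cleaner than the paper's argument. The paper needlessly compares $\E[\ell(X_{1,1}+x,k)\mid\theta_1=\thetaH]$ with its value at $\theta$, even though the second argument of $r_k$ is the true $\yb$. Do make explicit how you pass from pointwise-in-$\yb$ convergence in probability to the integrated statement, as the paper does:
\begin{enumerate}
\item bounded convergence of the expectation for each $\yb$;
\item dominated convergence over $\Theta$, with dominating function $\LT/\mu(\yb)$, integrable by A.7, together with Fubini;
\item Markov's inequality.
\end{enumerate}
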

Lemma \ref{lemma:PointEstimateConsistency} leads to Theorem \ref{thm:costconvergancewithlearning}.
% The next lemma presents results similar to Lemma \ref{lemma:PointEstimateConsistency} when $\xiH^b_{(k)}$ is concerned.
% \begin{lemma}
%     \label{lemma:PosteriorEstimateConsistency}
%     As $k \to \infty$,
%     \begin{enumerate}[(a)]
%         \item \label{lemma:PosteriorEstimateConsistency_xi} For all $\theta \in \Theta$, $\dfrac{1}{k} \xiH^b_{(k)}(\theta) \xrightarrow{a.s.} \dfrac{1}{k}\xi^*_k(\theta)$;
%         \item \label{lemma:PosteriorEstimateConsistency_Exi} $\dfrac{1}{k} \displaystyle \int_\Theta \frac{1}{\mu(\yb)} \xiH^b_{(k)}(\yb) dF_\theta(\yb) \xrightarrow{a.s.} \dfrac{1}{k} \displaystyle \int_\Theta \xi^*_k(\yb) dF_\theta(\yb)$;
%         \item \label{lemma:PosteriorEstimateConsistency_ell} $\dfrac{1}{k} \displaystyle \int_\Theta \frac{1}{\mu(\yb)} r_k \left(\xiH^b_{(k)}, \yb \right) dF_\theta(\yb) \xrightarrow{a.s.} \dfrac{1}{k} \displaystyle \int_\Theta r_k \left(\xi^*_k , \yb \right) dF_\theta(\yb)$;
%         \item \label{lemma:PosteriorEstimateConsistency_prob} $\displaystyle \int_\Theta \P \left\{Y(\yb)>\phi_k \left(\xiH^b_{(k)} (\yb) \right) \right\} dF_\theta(\yb) \xrightarrow{a.s.} \displaystyle \int_\Theta \P \left\{Y(\yb)>\phi_k \left(\xi^*_k(\yb) \right) \right\} dF_\theta(\yb)$.
%     \end{enumerate}
% \end{lemma}
% Lemma \ref{lemma:PosteriorEstimateConsistency} leads to Part \ref{thm:costconvergancewithlearning_posterior} of Theorem \ref{thm:costconvergancewithlearning}.

\section{Case Study Results}
\label{section:casestudyresults}
In this section, we evaluate the performance of our EOP thresholds using real degradation data from the case study on Interventional X-ray (IXR) systems presented in \cite{Drent2023}.
Below, we provide a brief summary of this case study and refer the interested reader to \cite{Drent2023} for a more detailed discussion.

IXR systems are advanced imaging platforms used in hospitals to support minimally invasive procedures, such as cardiac or vascular interventions.
Their most critical and costly replacement components are X-ray tubes, which gradually lose efficiency over time as the tungsten filaments inside them evaporate through high-temperature heating.
During operation, these filaments emit electrons that are directed toward a target to generate X-ray images.
Unexpected tube failures can force procedures to stop or be rescheduled, delay critical patient care, and generate substantial corrective maintenance expenses.

Filament condition monitoring is performed primarily through real-time measurements of electrical resistance.
The measurements are then processed to generate a one-dimensional degradation indicator for the filaments, which is stored in the database of the IXR system.
Major manufacturers of IXR systems, such as Philips Healthcare, require maintenance strategies that exploit these real-time measurements to prevent tube failure while maximizing their useful life.

\cite{Drent2023} provide time series degradation data for 52 filaments obtained from lab experiments by Philips Healthcare. The set of these 52 time series is denoted $\J$ and its cardinality $\vert\J \vert = 52$.
They describe how the data are preprocessed, including statistical tests to assess filament heterogeneity and to verify that electric shocks follow a Poisson process.
They also justify modeling the damage process as geometric.
Next, they show how, under this assumption, the policy adapted to $\F$ can be computed as the optimal solution of a Partially Observable Markov Decision Process (POMDP).
The resulting policy is optimal within the assumed model, though not necessarily for the specific case data.
This policy is called the Integrated Bayes Policy (IBP).
The IBP specifies adaptive replacement thresholds that depend on the information available at time $t$, denoted by $\xiI(\Ib_t)$.

The condition-based maintenance problem in this case study aligns with Example 1, for which our EOP thresholds $\xip$ can be efficiently applied.
Notice that $\xip(\Ib_t)$ is significantly more computationally efficient than $\xiI(\Ib_t)$, often by more than an order of magnitude. In fact, $\xiI(\Ib_t)$ becomes numerically intractable as the failure level increases.
The only computationally intensive step in applying $\xip(\Ib_t)$ is the calculation of $\gamma^*$ via Theorem \ref{thm:gamma*}\ref{thm:gamma*_algorithm}, which requires numerical integration. We perform this using Monte Carlo methods.
However, $\gamma^*$ needs to be computed only when the hyperparameters are updated
The runtime to update the posterior and to evaluate $\xip(\Ib_t)$ is negligible.
In contrast, solving a POMDP is NP-hard, so exact solutions might become computationally intractable for some realistic problem sizes.

In the following, we evaluate the performance of $\xip$ relative to $\xiI$, which is the current state-of-the-art strategy, using the 52 degradation time-series provided by \cite{Drent2023}.
This assessment automatically benchmarks our approach against the other state-of-the-art methods presented in \cite{Drent2023}.
We focus on cost savings percentages, defined as $\%SAV = \dfrac{\bar{g}(\xiI)-\bar{g}(\xip)}{\bar{g}(\xiI)} \times100$ where $\bar{g}(\cdot)$ denotes the average cost function.
Note that $(\alpha_0,\beta_0,a_0,b_0)$ is not known a priori in this problem.
We therefore use a sample from $\J$, called the training set, to estimate these hyperparameters via maximum likelihood estimation (MLE), as described in Appendix C of \cite{Drent2023}.
Since $\%SAV$ depends on the estimated $(\alpha_0,\beta_0,a_0,b_0)$, we generate multiple training sets from $\J$, denoted by $\J_{\scriptsize\mbox{training}}$.
We choose the sizes of the training sets $\Big\vert\J_{\scriptsize\mbox{training}}\Big\vert$ from $\{5,10,15\}$, and for each size, we randomly draw 150 subsets from $\J$, each of which provides an estimate of the hyperparameters.
This procedure gives us 450 bootstrapping instances.
For each instance, we compute $\%SAV$ based on the training set, denoted $\J_{\scriptsize\mbox{test}}:=\J \setminus\J_{\scriptsize\mbox{training}}$. In line with \cite{Drent2023}, we set $c_p$ and $c_f$ equal to 1 and 4, respectively, and $\ell(\cdot)\equiv 0$.
We compute the integrals in Equation \eqref{eq:gamma} using Monte Carlo sampling with $5\times 10^4$ samples, which gives narrow 95\% confidence intervals.
The results of our study are presented in Table \ref{tab:CaseStudy}.

\begin{table}[!htbp]
% \fontsize{8pt}{9pt}\selectfont
\scriptsize 
\centering
\caption{\textsf{Results of the case study.}}
\label{tab:CaseStudy}
\begin{tabular}{lll}
\toprule
$\Big\vert\J_{\scriptsize\mbox{training}}\Big\vert$ && $\%SAV$ \\
\midrule
5                    && -0.35   \\
10                   && 1.09 \\
15                   && 1.17 \\
\midrule
Total                && 0.63 \\
\bottomrule
\end{tabular}
\end{table}

Table \ref{tab:CaseStudy} shows that our asymptotically optimal adaptive threshold policy slightly reduces the average maintenance cost-rate by 0.63\% compared to the integrated Bayes policy across the bootstrapping instances.
The results indicate that the integrated Bayes policy performs slightly better for small training sets, i.e., when $\Big\vert\J_{\scriptsize\mbox{training}}\Big\vert=5$, whereas the asymptotically optimal adaptive threshold policy slightly outperforms the integrated Bayes policy for larger training sets, i.e., when $\Big\vert\J_{\scriptsize\mbox{training}}\Big\vert=10$ or 15.
We observe that these results remain consistent when using smaller sample sizes to compute the integrals in Equation \eqref{eq:gamma}, for example, as small as $10^4$.
These results demonstrate that our asymptotically optimal adaptive threshold policy is not only substantially faster than the integrated Bayes policy but also achieves strong cost-rate performance on real data.
From a managerial perspective, this combination of computational and cost-rate efficiency makes it highly suitable for real-time maintenance decision making.

% [TODO reflections on the result. also emphasis on how heterogeneity change replacement decisions (cases close to the 2 point distribution)]

\section{Simulation Study Results}
\label{section:Simulation}
In this section, we present the results of an extensive simulation study.
The objectives of this study are threefold.
\begin{enumerate}
    \item To evaluate the performance of the asymptotically optimal adaptive threshold policy relative to the optimal policy under the same filtration, when computation of the latter is feasible;
    \item To compare the performance of the asymptotically optimal adaptive threshold policy with the Oracle optimal policy;
    \item To analyze the optimality and prediction gaps associated with the Oracle’s asymptotically optimal policy.
\end{enumerate}
In this study, we assume that decision makers have access to the true hyperparameters $(\alpha_0, \beta_0, a_0, b_0)$.
This assumption allows us to isolate the impact of the policies of interest on the cost rates from the effect of estimation errors in the prior, which would arise if the prior were learned from historical data.
The latter effect is examined in Section~\ref{section:casestudyresults}, where we observed that the EOP performs remarkably well under such conditions.

Our study addresses two settings in terms of degradation state spaces: discrete, i.e. $1-$lattice, and continuous.
The discrete setting is characterized by a compound Poisson process with geometric compounding, whereas the continuous setting is characterized by a compound Poisson process with exponential compounding.
Example 1 details the computation of the EOP thresholds in the discrete setting, and Example 2 extends this to the continuous setting.
The optimal policy $g^*(\F)$ can be computed in a tractable manner using the integrated Bayes approach introduced in \cite{Drent2023}; however, this is feasible only for the discrete setting.
Hence, we do not compute $g^*(\F)$ for the continuous state space.

A tractable implementation of the integrated Bayes approach requires truncating both the cumulative number of shocks and the ages of the components, whereas the asymptotically optimal adaptive threshold policy does not.
This truncation, however, affects the resulting updated posteriors.
The cost-rate of the EOP is calculated under both the full observations, denoted $g(\xip)$, and the truncated observations, denoted $g(\xiptrunc)$, to assess the impact of truncation.

We are interested in the regret (relative to the Oracle) of the adaptive policies $\xi \in \left\{\xip, \xiptrunc, \xiI \right\}$, defined as $\%\reg(\xi):= 100\cdot\left(g(\xi)-g^*(\O) \right)/g^*(\O)$, which serves as a common performance metric across all policies and both discrete and continuous settings.
As noted earlier, $\%\reg \left( {\xiI} \right)$ is only available for discrete state space.
We are also interested in the optimality gap and prediction gap of $\xia$ with respect to $\xio$, defined as $\%OPT := 100\cdot\left(g \left(\xia \right) - g^*(\O) \right)/g^*(\O)$ and $\%PRED := 100\cdot\left(\gamma^*-g \left(\xia \right) \right)/g \left(\xia \right)$, respectively.
$\%OPT$ measures the relative additional cost incurred by using $\xia$ instead of $\xio$ as the Oracle's optimal decision, while
$\%PRED$ quantifies how far the approximated Oracle’s optimal cost-rate $\gamma^*$ deviates from the true value $g \left(\xia \right)$.

We construct a comprehensive testbed for our simulation study, shown in Table \ref{tab:testbed}, and conduct experiments across all combinations of the parameter values, with $c_p = 1$.
The shock arrival process follows a gamma prior with $\alpha_0 = \beta_0$, yielding a mean of 1.
For instances with a discrete state space, the damage process follows a Beta prior, whereas for instances with a continuous state space, it follows a Gamma prior, with $a_0 = b_0$ in both cases, which gives means of 0.5 and 1, respectively.
We vary the coefficients of variation to assess the impact of the degree of heterogeneity across instances.
We consider three operating cost functions $\ell_0(x)=0$, $\ell_1(x)= \dfrac{c_p \min\{x,L\}}{L^2/2+L+1}$, and $\ell_2(x)= \dfrac{c_p (\min\{x,L\})^2}{L^3/3+L^2+L+1}$.
The selection of $\ell_1$ and $\ell_2$ is such that the expected total operating cost at failure is approximately the same value as $c_p$ when $\mu(\theta)=1$, and $\sigma^2(\theta)$ is negligible.

\begin{table*}[!htbp]
  \caption{\textsf{Input parameters for numerical studies.}}
	\fontsize{8pt}{9pt}\selectfont
\scriptsize
  \label{tab:testbed}
  \begin{tabularx}{\textwidth}{l X l l }
    \toprule
    % \hline
    & Input parameter & No. of Choices & Values\\
		\midrule
    1 & Coefficient of variation for the prior of the shock arrival process, $cv^2_\nu$ & 2 & $0.3$, $0.6$\\
    2 & Coefficient of variation for the prior of the damage process, $cv^2_{\theta_Z}$ & 2 & 0.04, $0.08$\\
    3 & Failure threshold, $L$, & 3 & 10,20,30\\
    4 & Additional corrective maintenance cost, $c_f$ & 2 & 4,9\\
    5 & Operating cost, $\ell$ & 3 & $\ell_0$, $\ell_1$, $\ell_2$\\
    \bottomrule
  \end{tabularx}
\end{table*}

We compute $\gamma^*$ using Theorem \ref{thm:gamma*}\ref{thm:gamma*_algorithm} with Monte Carlo integration using $10^5$ samples per integral.
The value of $g^*$ is obtained from Theorem \ref{thm:lambda*}\ref{thm:lambda*_algorithm}, with Monte Carlo integration using $10^5$ samples per integral and $4 \times10^3$ simulated components per expectation function.
This ensures that the confidence intervals for the computed $\gamma^*$ and $g^*$ remain small.
We simulate $10^4$ components for each policy $\xi \in \{\xip, \xiptrunc, \xiI\}$ and compute the cost-rate across all components.
Each simulation is repeated 10 times to keep the confidence intervals for the performance metric small.
The policy $\xiI$ is obtained by solving the optimality Equations (10) in \cite{Drent2023}, modified to account for the operating cost, with a discount factor of 0.99 and a truncation of the number of shocks and components’ age at 40, 50, and 60 when the failure level is 10, 20, and 30, respectively.

The results of the simulation study are reported in Tables \ref{tab:simulation_discrete} and \ref{tab:simulation_continuous}.
\begin{table*}[!htbp]
  \fontsize{8pt}{9pt}\selectfont
  \scriptsize
  \parbox[t]{.9\linewidth}{
  \centering
  \caption{\textsf{Results of the simulation study - discrete state space}}
  \label{tab:simulation_discrete}
  \begin{tabular}{l l l l l l l l l l l l l l l l l l l l l l l }
        \toprule
        &&&& \multicolumn{3}{c}{$\%\reg \left(\xip \right)$} && \multicolumn{3}{c}{$\%\reg \left(\xiptrunc\right)$} && \multicolumn{3}{c}{$\%\reg \left(\xiI \right)$} && \multicolumn{3}{c}{$\%OPT$} && \multicolumn{3}{c}{$\%PRED$}\\
        \cmidrule{5-7} \cmidrule{9-11} \cmidrule{13-15} \cmidrule{17-19} \cmidrule{21-23}
        Input && Values && Min & Mean & Max && Min & Mean & Max && Min & Mean & Max && Min & Mean & Max && Min & Mean & Max \\ \midrule
        \multirow{3}{*}{$L$} && 10 && 3.76 & 11.05 & 24.93 &  & 3.19 & 11.54 & 24.20  &  & 2.69 & 11.16 & 26.75 &  & 0.03 & 3.03 & 8.69 &  & -13.88 & -8.04 & -0.11 \\ 
        && 20 && 1.14 & 5.68  & 15.53 &  & 0.85 & 6.19  & 16.24 &  & 0.97 & 6.17  & 16.97 &  & 0.02 & 1.44 & 4.53 &  & -8.47  & -4.87 & -0.27 \\ 
        && 30 && 0.47 & 3.06  & 9.55  &  & 0.35 & 3.50   & 10.46 &  & 1.06 & 4.00     & 11.31 &  & 0.03 & 0.90  & 3.04 &  & -5.79  & -3.37 & 0.24  \\ \\
        \multirow{2}{*}{$cv^2_\nu$} && 0.3 && 0.75 & 7.55  & 24.93 &  & 1.27 & 7.61  & 22.83 &  & 1.06 & 6.82  & 21.89 &  & 0.14 & 2.04 & 8.69 &  & -11.12 & -4.64 & -0.09 \\
        && 0.6  && 0.47 & 5.64  & 19.53 &  & 0.35 & 6.55  & 24.20  &  & 0.97 & 7.40   & 26.75 &  & 0.02 & 1.54 & 6.14 &  & -13.88 & -6.21 & 0.24  \\ \\
        \multirow{2}{*}{$cv^2_{\theta_Z}$} && 0.04 &  & 0.47 & 5.82  & 19.74 &  & 0.35 & 6.51  & 20.42 &  & 0.97 & 6.45  & 20.23 &  & 0.18 & 1.73 & 6.74 &  & -13.88 & -5.28 & -0.09 \\
        && 0.08 &  & 0.69 & 7.37  & 24.93 &  & 0.59 & 7.65  & 24.20  &  & 1.06 & 7.77  & 26.75 &  & 0.02 & 1.84 & 8.69 &  & -13.58 & -5.57 & 0.24  \\ \\
        \multirow{2}{*}{$c_f$} && 4    && 0.47 & 4.99  & 16.55 &  & 0.35 & 5.46  & 16.66 &  & 0.97 & 5.86  & 20.59 &  & 0.02 & 1.58 & 7.07 &  & -13.88 & -5.44 & 0.06  \\
        && 9    && 0.83 & 8.20   & 24.93 &  & 0.66 & 8.7   & 24.20  &  & 1.07 & 8.36  & 26.75 &  & 0.05 & 2.00    & 8.69 &  & -13.58 & -5.41 & 0.24  \\ \\
        \multirow{3}{*}{$\ell$}&& $\ell_0$    &  & 4.53 & 11.36 & 24.93 &  & 5.66 & 12.76 & 24.20  &  & 5.31 & 13.09 & 26.75 &  & 1.89 & 3.97 & 8.69 &  & -1.76  & -0.63 & 0.24  \\
        && $\ell_1$    &  & 0.47 & 3.88  & 14.06 &  & 0.35 & 3.98  & 13.20  &  & 0.97 & 3.81  & 12.26 &  & 0.02 & 0.52 & 2.45 &  & -13.29 & -7.51 & -3.96   \\
        && $\ell_2$    &  & 0.65 & 4.54  & 14.98 &  & 0.53 & 4.50   & 13.88 &  & 1.09 & 4.43  & 11.79 &  & 0.18 & 0.87 & 2.70  &  & -13.88 & -8.14 & -4.58 \\ \\ \midrule
        Total &&      &  & 0.47 & 6.59  & 24.93 &  & 0.35 & 7.08  & 24.20  &  & 0.97 & 7.11  & 26.75 &  & 0.02 & 1.79 & 8.69 &  & -13.88 & -5.43 & 0.24  \\
        \bottomrule
    \end{tabular}
  }
\end{table*}
\begin{table*}[!htbp]
  \fontsize{8pt}{9pt}\selectfont
  \scriptsize
  \parbox[t]{.9\linewidth}{
  \centering
  \caption{\textsf{Results of the simulation study - continuous state space}}
  \label{tab:simulation_continuous}
  \begin{tabular}{l l l l l l l l l l l l l l l }
        \toprule
        &&&& \multicolumn{3}{c}{$\%\reg \left(\xip \right)$} && \multicolumn{3}{c}{$\%OPT$} && \multicolumn{3}{c}{$\%PRED$}\\
        \cmidrule{5-7} \cmidrule{9-11} \cmidrule{13-15} 
        Input && Values && Min & Mean & Max && Min & Mean & Max && Min & Mean & Max  \\ \midrule
        \multirow{3}{*}{$L$} && 10 && 2.42 & 6.11 & 12.72 &  & 0.28 & 0.77 & 1.82 &  & -1.34 & -0.58 & -0.11 \\ 
        && 20 && 0.70 & 2.51 & 6.81  &  & 0.11 & 0.66 & 2.55 &  & -0.48 & -0.18 & 0.19 \\ 
        && 30 && 0.08 & 1.38 & 4.55  &  & 0.00   & 0.48 & 1.74 &  & -0.73 & -0.12 & 0.39  \\ \\
        \multirow{2}{*}{$cv^2_\nu$} && 0.3 && 0.08 & 3.53 & 12.72 &  & 0.00   & 0.50 & 1.63 &  & -1.19 & -0.30 & 0.09 \\
        && 0.6  && 0.28 & 3.14 & 12.61 &  & 0.11 & 0.77 & 2.55 &  & -1.34 & -0.29 & 0.39  \\ \\
        \multirow{2}{*}{$cv^2_{\theta_Z}$} && 0.04 &  & 0.08 & 3.00   & 10.05   &  & 0.00   & 0.79 & 2.55 &  & -0.74 & -0.19 & 0.39 \\
        && 0.08 && 0.13 & 3.67 & 12.72 &  & 0.00   & 0.49 & 1.74 &  & -1.34 & -0.40 & 0.19  \\ \\
        \multirow{2}{*}{$c_f$} && 4    && 0.08 & 2.45 & 7.69  &  & 0.00   & 0.47 & 1.99   &  & -0.78 & -0.21 & 0.28  \\
        && 9    && 0.29 & 4.22 & 12.72 &  & 0.02   & 0.81 & 2.55 &  & -1.34 & -0.37 & 0.39  \\ \\
        \multirow{3}{*}{$\ell$}&& $\ell_0$    &  & 2.12 & 5.71 & 12.72 &  & 0.05   & 1.14 & 2.55 &  & -1.30 & -0.24 & 0.39  \\
        && $\ell_1$    &  & 0.28 & 2.17 & 8.02    &  & 0.00   & 0.41 & 0.93 &  & -1.34 & -0.38 & -0.09   \\
        && $\ell_2$    &  & 0.08 & 2.12 & 8.04    &  & 0.00   & 0.37 & 0.86 &  & -1.17 & -0.26 & 0.00 \\ \\ \midrule
        Total &&      &  & 0.08 & 3.33 & 12.72 &  & 0.00   & 0.64 & 2.55 &  & -1.34 & -0.29 & 0.39  \\
        \bottomrule
    \end{tabular}
  }
\end{table*}
The results in Tables \ref{tab:simulation_discrete} and \ref{tab:simulation_continuous} indicate that EOP thresholds perform well in both discrete and continuous settings, with a mean $\%\reg(\xip)$ of 6.59\% and 3.3\%, respectively.
The regret of EOP relative to the integrated Bayes method is negligible, implying that EOP performs as well as the optimal policy in our experiments.
As the failure threshold $L$ increases from 10 to 30, mean $\%\reg(\xip)$ decreases from 11.05\% to 3.06\% in the discrete setting and from 6.1\% to 1.4\% in the continuous setting, consistent with our main convergence result (Theorem \ref{thm:costconvergancewithlearning}).
Moreover, $\%\reg(\xip)$ increases with the corrective maintenance cost $c_f$, because higher $c_f$ leads to smaller optimal thresholds, which in turn increase the approximation gap between $\xia$ and $\xio$ and amplify estimation errors of $\thetaH(\Ib_\tau)$.
Both tables show that the EOP thresholds effectively adapt to situations where the system incurs operating costs.
We observe from the results that the optimality gap of using $\xia$ instead of $\xio$ is small, indicating that $\xia$ provides an excellent approximation to the Oracle’s optimal thresholds.
Finally, the prediction gaps are more significant when the state space is discrete than in the continuous one, where they are negligible.

\section{Conclusion}
\label{section:Conclusion}
In this paper, we have studied the condition-based maintenance (CBM) problem for a heterogeneous population of components that undergo non-negative i.i.d. degradation processes with parameters unknown to the decision maker.
At equidistant time epochs, the decision maker receives real-time degradation information and must choose between replacing the operating component or allowing it to continue functioning.
The cost of replacing a failed component is considerably higher than that of a healthy component, and the system experiences a non-negative operating cost that rises with the degradation level.
At each time epoch, the degradation data is collected and used to learn the unknown parameters of the degradation process via a consistent estimator.
The decision maker then determines the maintenance action to minimize the cost-rate (long-run average cost).
In general, this problem can be solved using a partially observable Markov decision process (POMDP).
It is well known that POMDPs suffer from the curse of dimensionality.

We have proposed the Estimated Oracle's Optimal Policy (EOP) that chooses the CBM actions based on estimates of the optimal decision of an Oracle who has the full information of the true parameters of the degradation processes.
Furthermore, we have introduced a scaling regime in which the failure level, cost parameters, and total operating costs grow jointly.
This scaling regime captures the practical settings where the components' lifetime and the maintenance costs are much larger than the frequency of degradation measurements.
This common setting also represents conditions under which conventional POMDP approaches become intractable as the state space grows large.
We have proved that regret of the EOP, i.e., the difference between its cost-rate and that of the Oracle, converges to zero in our scaling regime, which also implies its asymptotic optimality.
We have evaluated the performance of the EOP on real degradation data of IXR systems relative to the state-of-the-art Integrated Bayes policy \citep{Drent2023}, which computes replacement policies using a POMDP model. Our results show that employing the EOP, on average, achieves a 0.63\% cost reduction compared to the Integrated Bayes policy.
We have conducted an extensive numerical experiment including discrete and continuous state spaces to test the performance of the EOP the Oracle's optimal policy.
We have observed that the relative regret of our EOP is small in both types of state spaces, with the average relative regret of 6.59\% in discrete state space and 3.3\% in continuous state space.
We have not been able to statically distinguish the regret of the EOP from the POMDP's (on average 7.11\%) which could only be computed in the discrete setting.
To the best of our knowledge, our EOP is the first data-driven CBM policy that can accommodate a wide range of degradation and parameter learning processes, provides an asymptotic performance guarantee, and has achieved remarkable results on real-life degradation data as well as numerical experiments across both discrete and continuous state spaces.

% Acknowledgments here
% \ACKNOWLEDGMENT{%
% Enter the text of acknowledgments here
% }% Leave this (end of acknowledgment)

% References here (outcomment the appropriate case) 
\bibliographystyle{informs2014} % outcomment this and next line in Case 1
\bibliography{bib} % if more than one, comma separated

% Appendix here
% Options are (1) APPENDIX (with or without general title) or 
%             (2) APPENDICES (if it has more than one unrelated sections)
% Outcomment the appropriate case if necessary
%
% \begin{APPENDIX}{<Title of the Appendix>}
% \end{APPENDIX}
%
%   or 

\newpage 

\begin{APPENDICES}
\section{Exponential compounding}\label{expCompoundingApp}
In this section, we adapt the analysis introduced in Example \ref{ex:exampleDef} to the case of exponential compounding. 

To this end, let $Z_{m,\tau}$ be supported on $\R_+$ and have an exponential distribution with parameter $\omega>0$, that is, $f_Z(x \mid \omega):=\P \left\{Z_{m,\tau}=x \mid \omega \right\} = \omega e^{-\omega x}$, where $x \geq 0$.
Accordingly, the probability mass function of $X_\tau$ can be expressed, using the law of total probability, as 
\begin{equation}
    \label{eq:pmf_CP_exp}
    f_X \left(x\mid\nu,\omega \right) = \begin{cases}
        f_M \left(0 \mid \nu \right) + \displaystyle\sum_{j=1}^\infty f_M \left(j \mid \nu \right)f_{Er}(0 \mid j,\omega), & x=0\\
        \displaystyle\sum_{j=1}^\infty f_M \left(j \mid \nu \right)f_{Er}(x \mid j,\omega), & x >0,
    \end{cases}
\end{equation}
where $f_{Er}(\cdot \mid j, \omega)$ is the Erlang-$j$ probability density function with the rate $\omega$, i.e., $f_{Er}(x \mid j, \omega):= \omega^j x^{j-1} e^{-\omega x}/(j-1)!$.
By Equations \eqref{eq:CP_moments} the moments of $X_\tau$ are given by
\begin{equation}
    \label{eq:CP_moments_exp}
    \mu(\nu,\omega) = \frac{\nu}{\omega}, \qquad \sigma^2(\nu,\omega) = \frac{2\nu}{\omega^2}.
\end{equation}

The degradation data observed by the decision maker at epoch $t \in \N_0$ includes the history of replacement times, the number of shocks, and the size of the damages induced by each shock up to $t$.
That is,
\[
    \Ib_t = \left \{N(j),A(j),M_{N(j),A(j)},Z_{N(j),1, A(j)}, \dots, Z_{N(j), M_{N(j),A(j)}, A(j)} \right\}_{j=0}^t.
\]
This degradation data also enables the computation of $X_{N(t),A(t)}$ and $S_{N(t),A(t)}$.
The functions $F_{\nu}$ and $F_{\theta_Z}$, as well as $F_X$ for every realization of $(\nu, \theta_Z)$ are also known.
The decision maker may sequentially estimate $(\nu, \theta_Z)$ in a Bayesian manner, based on the degradation data $\Ib_t$ using the estimators
$\left(\nuH \left( \Ib_t \right),\thetaH_{Z} \left( \Ib_t \right) \right)$, with density functions $f_\nu(\cdot \mid \Ib_t)$ and $f_{\theta_Z} (\cdot \mid \Ib_t)$, respectively.

Returning to the example, we notice that $f_Z(x \mid \omega)$ also belongs to the one-parameter exponential family. This property allows us to select conjugate priors for each of these (likelihood) functions, so that the posterior distributions remain within the same family, and the corresponding parameters can be updated efficiently, conditional on the degradation data $\Ib_t$ \citep[cf. Section 5.1.5.][]{Ghosh2007}.
The selection of the conjugate priors is as follows.

Again, we have that $f_{\nu} (\cdot) = f_\nu(\cdot \mid \Ib_0)$ corresponds to a gamma distribution with shape $\alpha_0$ and scale $\beta_0$. Now that the shock size is exponentially distributed, the density function $\theta_Z$ follows a gamma distribution. The selected $f_{\nu}$ and $f_{\theta_Z}$ are conjugate priors corresponding to likelihood functions $f_M$ and $f_Z$.
In both cases, $f_{\theta_Z}$ is parametrized by $a_0$ and $b_0$.
One can simply verify that the chosen $f_{\nu}$ and $f_{\theta_Z}$ satisfy all conditions specified in Section \ref{subsect:degrmodelHeterogeneous}.

Then, $\nuH \left( \Ib_t \right)$ follows a gamma distribution with parameters $\alpha_t$ and $\beta_t$.
Furthermore, $\thetaH_{Z} \left( \Ib_t \right)$ follows a gamma distribution with parameters $a_t$ and $b_t$.

We refer to $(\alpha_t, \beta_t, a_t, b_t)$ as hyperparameters.
Let $M_{A(t)}$ and $X_{A(t)}$ be the number of shocks and the degradation increment in the interval $(t-1,t]$.
% Let $m$ and $x$ denote the number of shocks and the corresponding cumulative degradation during the period $t-1 \in \N_0$, respectively.
Then, the hyperparameters can be updated as
\begin{alignat}{2}
    \label{eq:hyperParameter_update_exp}
    \left(\alpha_t, \beta_t, a_t, b_t \right)=& \left(\alpha_{t-1}+M_{A(t)}, \beta_{t-1}+1, a_{t-1}+M_{A(t)}, b_{t-1}+X_{A(t)} \right) \nonumber\\
    =& \left(\alpha_0+\sum_{j=1}^{A(t)} M_j, \beta_0+A(t), a_0+\sum_{j=1}^{A(t)} M_j, b_0+S_t \right)
\end{alignat}
% \begin{alignat*}{2}
%     &\left(\alpha_t, \beta_t, a_t, b_t \right)=\left(\alpha_{t-1}+M_{A(t)}, \beta_{t-1}+1, a_{t-1}+X_{A(t)}, b_{t-1}+M_{A(t)} \right), \quad && \textit{if the component is not in state 0},\\
%     &\left(\alpha_t, \beta_t, a_t, b_t \right)=\left(\alpha_0, \beta_0, a_0, b_0 \right), \quad && \textit{if the component is in state 0}.
% \end{alignat*}

The entire analysis in our examples can be replicated for many choices for $f_Z$ from the one-parameter exponential distributions if their conjugate priors satisfy the conditions outlined in Section \ref{subsect:degrmodelHeterogeneous}.

In the following, we adapt the analysis from Example \ref{ex:exampleDef_1} by replacing the geometric compounding distribution with an exponential distribution with a similar analytical framework.
We omit some intermediate derivation steps, as the reader can follow from the geometric compounding example.
Let $F_{Er}(\cdot \mid l,\omega)$ be the Erlang$-l$ distribution function with rate $\omega$. Then $F_X(\cdot \mid \nu, \omega)$ is given by
\begin{equation}
    \label{eq:cdf_CP_exp}
    F_X(x \mid \nu, \omega)= f_M(0 \mid \nu) +\sum_{l=1}^\infty f_M(l \mid \nu) F_{Er} (x\mid l, \omega).
\end{equation}
% Similar to Equation \eqref{eq:xi_lambda_CP_geom}, $\xi_\lambda$ can be obtained from the following problem
% \begin{alignat}{2}
%     \label{eq:xi_lambda_CP_exp}
%     \xi_\lambda(\nu,\omega)= &\min_{x \in \{0,\dots,L\}} x\\
%     \textit{s.t.} \quad &  a_2 x^2+ \left( \frac{2a_2\nu}{\omega} +a_1 \right)x - c_f \sum_{l=1}^\infty f_M \left(l\mid \nu \right) F_{Er} (x \mid l, \omega) \nonumber\\
%     &= \lambda -c_f (1 - f_M(0 \mid \nu)) -\frac{a_2\nu(\nu+2)}{\omega^2} - \frac{a_1\nu}{\omega} \nonumber
% \end{alignat}
Let the mean of the Erlang$-l$ distribution be $\mu_{Er}(l,\omega):=l/\omega$. Then 
\begin{alignat}{2}
    \label{eq:YDist_CP_exp}
    &\P\{Y(\nu,\omega)\leq x\}= \nonumber\\
    &\frac{\omega}{\nu} \left(x \left(1-f_M(0 \mid \nu)\right) + \sum_{l=1}^\infty f_M(l\mid \nu) \Big( \mu_{Er}(l,\omega) F_{Er}(x\mid l+1,\omega) - xF_{NB} (x \mid l,\omega) \Big)\right).
\end{alignat}
and 
\begin{equation}
    \label{eq:EY_CP_exp}
    E[Y(\nu,\omega)] = \frac{\nu+2}{\omega}.
\end{equation}
% Furthermore by \eqref{eq:r_ell_continuous},
% \begin{equation}
%     \label{eq:r_ell_CP_exp}
%     r(x,\nu,\omega)= \frac{a_2 \omega}{3\nu}x^3+ \frac{2a_2 \nu+a_1 \omega}{2\nu} x^2+ \frac{a_2 \left(\nu+2\right) + a_1 \omega}{\omega} \left(x+\frac{\nu}{\omega} \right).
% \end{equation}
Having these results, one can compute $\xi_{\gamma^*}$ using Theorem \ref{thm:gamma*}.
Next, the hyperparameters can be updated as per \eqref{eq:hyperParameter_update}, that is
\begin{alignat}{2}
    \label{eq:posteriorMean_CP_exp}
    \thetaH(\Ib_\tau)= & \left(\hat{\nu}(\Ib_\tau), \thetaH_Z(\Ib_\tau) \right)= \left(\frac{\alpha_\tau}{\beta_\tau}, \frac{a_\tau}{b_\tau} \right) \nonumber\\
    =& \left(\frac{\alpha_{\tau-1}+M_\tau}{\beta_{\tau-1}+1}, \frac{a_{\tau-1}+M_\tau}{b_{\tau-1}+X_\tau} \right)= \left(\frac{\alpha_0+\sum_{j=1}^\tau M_j}{\beta_0+\tau}, \frac{a_0+\sum_{j=1}^\tau M_j}{b_0+S_\tau} \right),
\end{alignat}
The difference between \eqref{eq:posteriorMean_CP_exp} and \eqref{eq:posteriorMean_CP_geom} lies in the distribution of $\theta_Z$: when the compounding distribution is exponential, $f_{\theta_Z}$ is the density of a gamma distribution, whereas under geometric compounding, $\theta_Z$ follows a beta distribution.
Finally, our EOP thresholds are expressed as follows
\[
   \xip(\Ib_\tau)= \xia  \left(\hat{\nu}(\Ib_\tau), \thetaH_Z(\Ib_\tau) \right).
\]
% and
% \[
%     \xiH^b_t(\nu,\omega) = \int_0^\infty \int_0^\infty \xi_{\gamma^*}(x,y) f_\nu(x \mid \alpha_t,\beta_t) f_{\theta_Z}(y \mid a_t,b_t) dx dy.
% \]

\section{Example: Gamma process}
This section characterizes the EOP thresholds for a Gamma degradation process by deriving explicit expressions under the assumption that $\ell(x) = 0$ for all $x$.
Let the increments $\{X_{1,\tau}\}_{\tau\in \N}$ be independently drawn from a gamma distribution with a known shape parameter $\alpha>0$ and unknown rate parameter $\beta>0$, with density
\[
    f_X(x \mid \alpha, \beta) = \frac{\beta^\alpha}{\Gamma(\alpha)} x^{\alpha-1} e^{-\beta x},
\]
where $\Gamma(\cdot)$ denotes the gamma function.
We impose a conjugate prior on $\beta$, specified as a gamma distribution with shape parameter $a_0>0$ and rate parameter $b_0>0$.
That is $(a_0,b_0)$ are the initial hyperparameters which can be updated as per
\begin{equation}
    \label{eq:hyperparameterupdate_gammaProcess}
    (a_t,b_t) = (a_{t-1}+\alpha, b_{t-1}+X_{A(t)})= (a_0+\alpha A(t), b_0+S_{N(t),A(t)}).
\end{equation}
In this section, $F_\theta(\cdot)$ denotes $F_\theta(\cdot \mid a_0, b_0)$, and we explicitly condition on the parameters of $F_\theta$ when parameters other than $(a_0,b_0)$ are considered.
Let $\hat{\beta}_t$ be the posterior mean of $\beta$, then
\begin{equation}
    \label{eq:beta_Estimate}
    \hat{\beta}_t = \frac{a_0+\alpha A(t)}{b_0+S_{N(t),A(t)}}.
\end{equation}
The key step is to derive a recursive relation that determines $\lambda_{j+1}$ from $\lambda_j$, as stated in Theorem \ref{thm:gamma*}\ref{thm:gamma*_FixedPoint}. 
This relation enables efficient computation of $\gamma^*$ that leads to the identification of $\xia$, that is,
\[
    \xia(\beta) = L - \min \left\{F_X^{-1}(1- \gamma^*/c_f \mid \alpha, \beta),L \right\} = L -  \min \left  \{ \frac{1}{\beta} q(\gamma^*),L \right\},
\]
with $q(x):= F_X^{-1}(1- x/c_f \mid \alpha, 1)$ $ x\geq0$, as in Equation \eqref{eq:NewsVendor}, and
\[
    \xip(\Ib_t) = L - \min \left \{ \frac{a_0+\alpha A(t)}{b_0+S_{N(t),A(t)}} q(\gamma^*) , L \right\},
\]
according to Equations \eqref{eq:adaptivethreshold} and \eqref{eq:beta_Estimate}.
In the following, we first state the recursive relation and then present the main steps to derive it.
$\lambda_{j+1}$ can be expressed as follows 
\begin{equation}
    \label{eq:lambda_recursion_gamma}
    \lambda_{j+1} = \dfrac{c_p+c_f-\left(\lambda_j \left(q(\lambda_j) - \alpha\right)+c_f(\alpha-q(\lambda_j) f_X \left(q(\lambda_j) \mid \alpha, 1 \right)) \right) (1 - F_\theta(y_j)) - \alpha c_f \left( I(\lambda_j)- \dfrac{L^\alpha b_0^{a_0} F_\theta(y_j \mid \alpha+a_0, L+b_0)}{B(\alpha, a_0) (L+b_0)^{\alpha+a_0}}\right)}{\dfrac{ La_0}{b_0} (1-F_\theta(y_j \mid a_0+1,b_0)
    -q(\lambda_j)(1-F_\theta(y_j))
    +\dfrac{\alpha+1}{2}},
\end{equation}
where the real function $I$ is defined by
\[
    I(x):= \frac{1}{L} \int_0^{q(x)} \left(\frac{u}{\alpha} \left(1-F_X \left(u \mid\alpha,1 \right)\right)+F_X \left(u \mid\alpha,1 \right)\right) dF_\theta(u),
\]
$y_j:= q(\lambda_j)/L$, and $B(\cdot,\cdot)$ is the beta function, i.e.,
\[
    B(\alpha, a_0):= \frac{\Gamma(\alpha) \Gamma(a_0)}{\Gamma(\alpha+a_0)}.
\]
The numerical computation of $I(x)$ is efficient, as the integrand remains bounded over the domain of the integral, including the boundaries.
$\lambda_{j+1}$ can be approximated when $L$ is large as
\begin{equation}
    \label{eq:lambda_recursion_gamma_approx}
    \lambda_{j+1} \approx \dfrac{c_p+c_f \left( 1 + q(\lambda_j) f_X \left(q(\lambda_j) \mid \alpha, 1 \right) -\alpha \right)+ \lambda_j \left( \alpha - q(\lambda_j) \right)}{\dfrac{ La_0}{b_0}
    -q(\lambda_j)
    +\dfrac{\alpha+1}{2}},
\end{equation}

In the remainder of this section we present the derivation of Equation \eqref{eq:lambda_recursion_gamma}.
We first derive explicit expressions for $\P\{Y(\beta) \leq x\}$ and $\E[Y(\beta)]$ according to Equations \eqref{eq:YProb} and \eqref{eq:EY}, respectively.
We then use these results together with Equations \eqref{eq:gamma} and \eqref{eq:NewsVendor} to derive the recursive equation.

We use integration by part to find
\begin{alignat*}{2}
    \P\{Y(\beta) \leq x\}=& \frac{1}{\mu(\beta)} \int_0^x \left(1-F_X(y \mid \alpha, \beta) \right) dy = \frac{1}{\mu} \left(x - x F_X(x \mid \beta) + \int_0^x y f_X(y  \mid \alpha, \beta) dy\right) \\
    =&  \frac{1}{\mu(\beta)} \left(x \left(1-F_X(x \mid \alpha,\beta)\right) +
    \int_0^x \frac{\beta^{\alpha}}{\Gamma(\alpha)} y^{\alpha} e^{-\beta y} dy \right)\\
    = & 
    \frac{1}{\mu(\beta)} \left(x \left(1-F_X(x \mid \alpha,\beta)\right) +
    \frac{\alpha}{\beta}\int_0^x \frac{\beta^{\alpha+1}}{\Gamma(\alpha+1)} y^{\alpha} e^{-\beta y} dy \right)\\
    = & 
    \frac{1}{\mu(\beta)} \left(x \left(1-F_X(x \mid \alpha,\beta)\right) +
    \mu(\beta) \left(\int_0^x \frac{\beta^{\alpha}}{\Gamma(\alpha)} y^{\alpha-1} e^{-\beta y} dy - \frac{\beta^\alpha}{\Gamma(\alpha+1)}x^\alpha e^{-\beta x} \right) \right)\\
    =& \frac{x}{\mu(\beta)}  \left(1-F_X(x \mid\alpha,\beta)\right) +
    F_X(x \mid \alpha, \beta) - \frac{x}{\alpha} f_X(x \mid \alpha, \beta) .
\end{alignat*}
$E[Y(\beta)]$ can be expressed by
\[
    \E[Y(\beta)]= \frac{\alpha+1}{2 \beta}.
\]
By Theorem \ref{thm:gamma*}\ref{thm:gamma*_FixedPoint} and Equation \eqref{eq:gamma}
\[
    \lambda_{j+1} = \frac{c_p+c_f - c_f \displaystyle \int_0^\infty \P \left\{Y(y) \leq \phi \left(\xia_{\lambda_j}(y) \right) \right\}dF_\theta(y)}{\displaystyle \int_0^\infty \frac{1}{\mu(y)} \left(L-\phi \left( \xia_{\lambda_j}(y) \right) + \E[Y(y)]
    \right) dF_\theta(y)},
\]
where by Equation \eqref{eq:NewsVendor}
\[
    \phi \left(\xia_{\lambda_j}(y) \right) = \min \left\{F_X^{-1} \left(1- \frac{\lambda_j}{c_f} \mid \alpha, y \right), L \right\} = \min \left\{\frac{1}{y} q(\lambda_j), L \right\}.
\]
It follows that
\begin{equation}
    \label{eq:lambda_j1}
    \scriptsize
    \lambda_{j+1} =
    \frac{c_p+c_f - c_f \displaystyle \int_0^\infty \left(\frac{y}{\alpha} \phi \left( \xia_{\lambda_j}(y) \right) \left(1-F_X \left(\phi \left( \xia_{\lambda_j}(y) \right) \mid\alpha,y \right)\right) +
    F_X \left(\phi \left( \xia_{\lambda_j}(y) \right) \mid \alpha, y \right) - \frac{\phi \left( \xia_{\lambda_j}(y) \right)}{\alpha}f_X \left(\phi \left( \xia_{\lambda_j}(y) \right) \mid \alpha, y \right) \right)  dF_\theta(y)}{\displaystyle \int_0^\infty \frac{y}{\alpha} \left(L-\phi \left( \xia_{\lambda_j}(y) \right) + \frac{\alpha+1}{2y}
    \right) dF_\theta(y)}.
\end{equation}
Observe that $\phi \left(\xia_{\lambda_j}(\beta) \right)$ can be expressed by
\begin{equation*}
    \phi \left(\xia_{\lambda_j}(\beta) \right)= \begin{dcases}
        L, & \text{if } 0 <\beta \leq y_j,\\
        \frac{q(\lambda_j)}{\beta}, & \text{if }\beta> y_j.
    \end{dcases}
\end{equation*}
Thus,
\begin{alignat}{2}
    \label{eq:lambda_j1_term1}
    &\displaystyle \int_0^\infty \left(\frac{y}{\alpha} \phi \left( \xia_{\lambda_j}(y) \right) \left(1-F_X \left(\phi \left( \xia_{\lambda_j}(y) \right) \mid\alpha,y \right)\right)+F_X \left(\phi \left( \xia_{\lambda_j}(y) \right) \mid\alpha,y \right)\right) dF_\theta(y) \nonumber \\
    =&\displaystyle \int_0^{y_j} \left(\frac{y}{\alpha} \phi \left( \xia_{\lambda_j}(y) \right) \left(1-F_X \left(\phi \left( \xia_{\lambda_j}(y) \right) \mid\alpha,y \right)\right)+F_X \left(\phi \left( \xia_{\lambda_j}(y) \right) \mid\alpha,y \right)\right) dF_\theta(y)  \nonumber \\
    + & \int_{y_j}^\infty \left(\frac{y}{\alpha} L \left(1-F_X \left(L \mid\alpha,y \right)\right)+F_X \left(L \mid\alpha,y \right)\right) dF_\theta(y)\nonumber \\
    =& \int_0^{y_j} \left(\frac{Ly}{\alpha} \left(1-F_X \left(y L \mid\alpha,1 \right)\right)+F_X \left(y L \mid\alpha,1 \right)\right) dF_\theta(y) + \displaystyle \int_{y_j}^\infty \left(\frac{y}{\alpha} \frac{q(\lambda_j)}{y}  \frac{\lambda_j}{c_f}+ \left(1-\frac{\lambda_j}{c_f} \right) \right) dF_\theta(y) \nonumber\\
    = & \int_0^{y_j} \left(\frac{y L}{\alpha} \left(1-F_X \left(y L \mid\alpha,1 \right)\right)+F_X \left(y L \mid\alpha,1 \right)\right) dF_\theta(y) + \left(\frac{\lambda_j \left(q(\lambda_j) - \alpha\right)}{\alpha c_f}+1 \right) (1 - F_\theta(y_j)) \nonumber\\
    = & \frac{1}{L} \int_0^{q(\lambda_j)} \left(\frac{u}{\alpha} \left(1-F_X \left(u \mid\alpha,1 \right)\right)+F_X \left(u \mid\alpha,1 \right)\right) dF_\theta(u) + \left(\frac{\lambda_j \left(q(\lambda_j) - \alpha\right)}{\alpha c_f}+1 \right) (1 - F_\theta(y_j))
\end{alignat}
Next, observe that for $0 <\beta \leq y_j$
\[
    \phi \left( \xia_{\lambda_j}(\beta) \right)f_X \left(\phi \left( \xia_{\lambda_j}(\beta) \right) \mid \alpha, \beta \right)= L f_X(L \mid \alpha, \beta),
\]
and for $\beta>y_j$
\[
    \phi \left( \xia_{\lambda_j}(\beta) \right)f_X \left(\phi \left( \xia_{\lambda_j}(\beta) \right) \mid \alpha, \beta \right)= \frac{q(\lambda_j)}{\beta} \frac{\beta^\alpha}{\Gamma(\alpha)} \left(\frac{q(\lambda_j)}{\beta}\right)^{\alpha-1} e^{-\beta \frac{q(\lambda_j)}{\beta}}
    = q(\lambda_j) f_X \left(q(\lambda_j) \mid \alpha, 1 \right),
\]
% \begin{alignat*}{2}
%     &\phi \left( \xia_{\lambda_j}(\beta) \right)f_X \left(\phi \left( \xia_{\lambda_j}(\beta) \right) \mid \alpha, \beta \right)= \frac{q(\lambda_j)}{\beta} \frac{\beta^\alpha}{\Gamma(\alpha)} \left(\frac{q(\lambda_j)}{\beta}\right)^{\alpha-1} e^{-\beta \frac{q(\lambda_j)}{\beta}} \\
%     =& q(\lambda_j) f_X \left(q(\lambda_j) \mid \alpha, 1 \right),
% \end{alignat*}
which implies that
\begin{alignat}{2}
    \label{eq:lambda_j1_term2}
    & \displaystyle \int_0^\infty \frac{\phi \left( \xia_{\lambda_j}(y) \right)}{\alpha}f_X \left(\phi \left( \xia_{\lambda_j}(y) \right) \mid \alpha, y \right) dF_\theta(y) \nonumber
    \\
    =& \displaystyle \int_0^{y_j} \frac{\phi \left( \xia_{\lambda_j}(y) \right)}{\alpha}f_X \left(\phi \left( \xia_{\lambda_j}(y) \right) \mid \alpha, y \right) dF_\theta(y) + \displaystyle \int_{y_j}^\infty \frac{\phi \left( \xia_{\lambda_j}(y) \right)}{\alpha}f_X \left(\phi \left( \xia_{\lambda_j}(y) \right) \mid \alpha, y \right) dF_\theta(y) \nonumber
    \\
    =&  \frac{L^\alpha b_0^{a_0}}{\Gamma(\alpha) \Gamma(a_0)} \displaystyle \int_0^{y_j} y^{\alpha+a_0-1}e^{-(b_0+L) y}dy + \frac{q(\lambda_j)}{\alpha} f_X \left(q(\lambda_j) \mid \alpha, 1 \right)(1-F_\theta(y_j)) \nonumber
    \\
    =& \frac{L^\alpha b_0^{a_0} F_\theta(y_j \mid \alpha+a_0, L+b_0)}{B(\alpha, a_0) (L+b_0)^{\alpha+a_0}}  + \frac{q(\lambda_j)}{\alpha} f_X \left(q(\lambda_j) \mid \alpha, 1 \right)(1-F_\theta(y_j)).
\end{alignat}
Finally,
\begin{alignat}{2}
    \label{eq:lambda_j1_term3}
    &\displaystyle \int_0^\infty \frac{y}{\alpha} \left(L-\phi \left( \xia_{\lambda_j}(y) \right) + \frac{\alpha+1}{2y}
    \right) dF_\theta(y) \nonumber\\
    =& \displaystyle \int_0^{y_j} \frac{y}{\alpha} \left(L-\phi \left( \xia_{\lambda_j}(y) \right) + \frac{\alpha+1}{2y}
    \right) dF_\theta(y) + \displaystyle \int_{y_j}^\infty \frac{y}{\alpha} \left(L-\phi \left( \xia_{\lambda_j}(y) \right) + \frac{\alpha+1}{2y}
    \right) dF_\theta(y) \nonumber \\
    =& \displaystyle \int_0^{y_j} \frac{y}{\alpha} \left(L-L + \frac{\alpha+1}{2y}
    \right) dF_\theta(y) + \displaystyle \int_{y_j}^\infty \frac{y}{\alpha} \left(L-\frac{q(\lambda_j)}{y} + \frac{\alpha+1}{2y}
    \right) dF_\theta(y) \nonumber \\
    =& \frac{1}{\alpha}\left(\frac{ La_0}{b_0} (1-F_\theta(y_j \mid a_0+1,b_0)
    -q(\lambda_j)(1-F_\theta(y_j))
    +\frac{\alpha+1}{2}\right).
\end{alignat}
Equation \eqref{eq:lambda_j1} together with \eqref{eq:lambda_j1_term1}, \eqref{eq:lambda_j1_term2}, and \eqref{eq:lambda_j1_term3} provide \eqref{eq:lambda_recursion_gamma}. 

\section{Lemma \ref{lemma:ExpectedLifetime}: On the Finite Expected Lifetime of Components}
\begin{lemma}
    \label{lemma:ExpectedLifetime}
    Under any policy $\pi$, the expected lifetime and the expected maintenance cost of an operating component $i \in \N$ are finite, i.e., $\E[T_i]<\infty$, and $\E[C_i]<\infty$. 
\end{lemma}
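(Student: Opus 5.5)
The plan is to dominate the lifetime of component $i$ under an arbitrary policy $\pi$, pathwise, by the first-passage time of its degradation path above the failure level $L$. I would then bound the expectation of that first-passage time with renewal-theoretic estimates and integrate over the prior using Assumption~\ref{assum:converge}. The cost bound then follows from the lifetime bound.

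\textbf{Step 1 (pathwise domination).} Whatever $\pi\in\Pi_\F$ or $\pi\in\Pi_\O$ does, a failed component, i.e. one with $S_{i,\tau}>L$, is replaced correctively at that epoch. Hence $T_i(\pi)\le \inf\{\tau\in\N: S_{i,\tau}>L\}$, which is $T(L,\theta_i)$ evaluated on the increments $\{X_{i,\tau}\}_\tau$ of component $i$. The policy may depend on the whole history $\Ib_t$ (or $\Ob_t$), but the key observation is this: conditional on $\theta_i$, the increments $X_{i,1},X_{i,2},\dots$ are i.i.d.\ with law $F_X(\cdot\mid\theta_i)$ and independent of the past, and $\theta_i\sim F_\theta$ is independent of the earlier components. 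The policy only decides to stop earlier, so the domination holds path by path. Therefore
\[
\E[T_i(\pi)] \le \int_\Theta \E[T(L,\yb)]\,dF_\theta(\yb).
\]

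\textbf{Step 2 (bounding $\E[T(L,\theta)]$).} Fix $\theta$. By Wald's identity, as already used after Equation~\eqref{eq:SLLN},
\[
\E[T(L,\theta)]=\bigl(L+\E[\YT_1(L)\mid\theta_1=\theta]\bigr)/\mu(\theta).
\]
Before applying Wald I need $\E[T(L,\theta)]<\infty$ for each fixed $\theta$. This is standard: since $\mu(\theta)>0$ there is $\delta>0$ with $\P\{X>\delta\mid\theta\}>0$, so $T$ has a geometric tail. Then I would bound the overshoot uniformly in the level using Lorden's inequality,
\[
\E[\YT_1(x)\mid\theta_1=\theta]\le \frac{\E[X_{1,1}^2\mid\theta_1=\theta]}{\mu(\theta)}=\frac{\sigma^2(\theta)+\mu^2(\theta)}{\mu(\theta)}.
\]
This is valid by Assumption~\ref{assum:MeanPositivity}. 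It yields
\[
\E[T(L,\theta)]\le \frac{L}{\mu(\theta)}+\frac{\sigma^2(\theta)}{\mu^2(\theta)}+1 .
\]
If I prefer to avoid Lorden's inequality, I would instead use the cruder bound $\YT_1(L)\le X_{1,T}$ together with a renewal-function bound $U(L)\le 1+ (L+\cdot)/\mu$. That route requires care with the size-biasing of the last increment, which is exactly what Lorden handles. I therefore expect this step to be the only genuinely technical point.

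\textbf{Step 3 (integration and cost).} Integrating the bound from Step 2 against $F_\theta$ and invoking Assumption~\ref{assum:converge}(a),(b) gives $\E[T_i]<\infty$ uniformly over policies. This is also the bound \eqref{eq:ET_bounds} referred to in Section~\ref{section:OracleOptimal}. For the cost, equation~\eqref{eq:Cnindicator}, together with boundedness and monotonicity of $\ell$ and Assumption~\ref{assum:ellbounded}, gives the pathwise bound
\[
C_i\le c_p+c_f+\ell(L)\,T_i,
\]
so $\E[C_i]\le c_p+c_f+\ell(L)\E[T_i]<\infty$.
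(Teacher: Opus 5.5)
Your proof is correct and follows the paper's argument. You dominate $T_i$ by the replace-on-failure time $T(L,\theta_i)$, bound its conditional mean with Lorden's inequality, integrate over $F_\theta$ using Assumption~\ref{assum:converge}, and finish with $C_i\le c_p+c_f+\ell(L)T_i$; the only difference is cosmetic (you apply Lorden to the overshoot via Wald's identity, while the paper applies it to the renewal function $U_i(L)$), and your bound $L/\mu(\theta)+\sigma^2(\theta)/\mu^2(\theta)+1$ is the correctly normalized form of \eqref{eq:ET_bounds}.
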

\proof{Proof of Lemma \ref{lemma:ExpectedLifetime}.}
We upperbound $\E[T_i]$ using the expected lifetime of the component under the policy that only replaces upon failure. Note that $T_i(L,\theta_i)$ is the time until failure of the $i$-th component.
Let $U_i$ be the renewal measure of $F_X(\cdot\mid \theta_i)$ defined by $U_i(x):= \sum_{\tau=0}^\infty F_X^{*\tau}(x \mid \theta_i)$ with $F_X^{*\tau}(\cdot\mid\theta_i)$ denoting the $\tau$-fold convolution of $F_X(\cdot\mid\theta_i)$, and $F_X^{*0}$ the distribution function degenerate at the origin.
Next, For all $x \geq 0$,
\begin{equation}
    \label{eq:BoundsU}
    \frac{1}{\mu(\theta_i)}x\leq U_i(x) \leq \frac{1}{\mu(\theta_i)}x + \frac{2\E \left[Y(\theta_i) \right]}{\mu(\theta_i)},
\end{equation}
where $\E[Y(\theta_i)]$ is given by \eqref{eq:EY}.
The left inequality in \eqref{eq:BoundsU} follows from the non-negativity of the residual life after $x$ \citep[cf. V.6.][]{Asmussen2003}, and the right inequality in \eqref{eq:BoundsU} is Lorden's inequality \citep[][]{Lorden1970}.
Moreover, from renewal theory,
\begin{equation}
    \label{eq:meanT}
    \E[T \left(L,\theta_i \right)] = U_i(L)+1
\end{equation}
\citep[cf. Theorem V.2.4.][]{Asmussen2003}.
Equation \eqref{eq:BoundsU} together with \eqref{eq:meanT} imply
\begin{equation}
    \label{eq:ET_bounds}
    0 \leq \E[\E[T \left(L,\theta_i \right) \mid \theta_i]] \leq L \E \left[ \frac{1}{\mu(\theta_i)} \right]  + 2 \E\left[ Y(\theta_i) \right]+1 < \infty,
\end{equation}
where the last inequality follows from Assumption \ref{assum:converge}.
Next,
\begin{equation}
    \label{eq:finiteCycleCost}
    \E[C_i] \leq c_p+c_f+\E \left[ \sum_{\tau=1}^{T(L,\theta_i)} \ell(L)\right]= c_p+c_f+\ell(L) \E[T(L,\theta_i)]<\infty.
\end{equation}
The first inequality in \ref{eq:finiteCycleCost} follows from the definition of $C_i$ according to Equation \eqref{eq:Cnindicator} and the upperbound on $\ell(\cdot)$ as per Assumption \ref{assum:ellbounded}, and the second inequality from \eqref{eq:ET_bounds}.
\Halmos
\endproof

\section{Proof of Theorem \ref{thm:ExistNStrucOPTHetero}}
Consider the following Bellman optimality equations under the discounted total cost criterion with discount factor $\alpha \in (0,1)$
\begin{equation}
    \label{eq:discOPTEq}
    v_\alpha(s,\theta) =
    \begin{cases}
        \min \Big\{\ell(s)+\alpha\E[v_\alpha(s+X_{1,1},\theta) \mid \theta_1=\theta], \ell(s)+ c_p + \displaystyle \alpha\int_{\Theta} \E[v_\alpha(X_{1,1},\theta_1) \mid \theta_1= \yb] dF_\theta(\yb) \Big\}, & \forall s \leq L, \theta \in \Theta \\
        \ell(s)+c_p+c_f + \displaystyle \alpha \int_{\Theta} \E[v_\alpha(X_{1,1},\theta) \mid \theta_1 = \yb] dF_\theta(\yb), & \forall s > L, \theta \in \Theta.
    \end{cases}
\end{equation}
Define $m_\alpha := \displaystyle\inf_{(s,\theta) \in [0,L] \times \Theta} v_\alpha(s,\theta)$, and $w_\alpha:= v_\alpha(s, \theta)-m_\alpha$.
We intend to use the results of \cite{schal1993} to show that
\begin{equation}
    \label{eq:convergenceAverageCost}
    \lim_{\alpha \to 1} (1-\alpha)m_\alpha = \lim_{\alpha \to 1} (1-\alpha)v_\alpha(s, \theta)=g^*, \qquad \forall (s,\theta) \in [0,L] \times \Theta.
\end{equation}
In particular, our aim is to use Theorem 3.8 in \cite{schal1993}.
We restate this Theorem as the following lemma, in a form that is more convenient for our analysis.
\begin{lemma} \citep[adapted from Theorem 3.8.][]{schal1993}
    \label{lemma:Thm3.8Schal}
    For all states $(s,\theta) \in [0,L] \times \Theta$, assume that (A) $\displaystyle\sup_{\alpha<1} w_\alpha(s,\theta)<\infty$, and (B) the decision space is finite.
    Then there exists a stationary policy $\pi^*$ that is average optimal in the sense that $g(\pi^*)=g^*$.
    Furthermore, the convergence result \eqref{eq:convergenceAverageCost} holds.
\end{lemma}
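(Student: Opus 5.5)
The plan is to obtain Lemma~\ref{lemma:Thm3.8Schal} by casting the Oracle's problem in the Borel-space MDP framework of \cite{schal1993}, checking each standing hypothesis of his Theorem~3.8, and then translating his conclusions into our notation.

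\textbf{Embedding the model.} The state space is $E:=\R_+\times\Theta$, a Borel subset of $\R^{1+\rho}$. The action set $\{\mbox{continue},\mbox{replace}\}$ is the same finite set in every state, so it is trivially compact. The transition kernel has two cases. Under ``continue'' from $(s,\theta)$ with $s\le L$ it is the law of $(s+X_{1,1},\theta)$ given $\theta_1=\theta$. Under ``replace'', or whenever $s>L$, it is the law of $(X_{1,1},\theta_1)$ with $\theta_1\sim F_\theta$. The one-step cost is $\ell(s)$ plus $c_p$ on replacement and plus $c_p+c_f$ on failure; it is nonnegative and bounded above by $c_p+c_f+\ell(L)$ by Assumption~\ref{assum:ellbounded}.

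\textbf{Verifying Schäl's hypotheses.} Nonnegativity and boundedness of the cost give
\[
0\le m_\alpha\le v_\alpha\le \frac{c_p+c_f+\ell(L)}{1-\alpha},
\]
so $m_\alpha$ is finite and $(1-\alpha)m_\alpha$ is bounded. Because the action space is finite, the continuity and compactness requirements in the action variable hold automatically. Measurability of the transition kernel and of $v_\alpha$ in $(s,\theta)$ follows from Assumptions A.1--A.3 together with the boundedness of $\ell$. For the same reason, a measurable minimizing selector in \eqref{eq:discOPTEq}, and later in the average-cost inequality, can be taken to be the first action attaining the minimum. This leaves hypothesis (A), $\sup_{\alpha<1}w_\alpha(s,\theta)<\infty$, which is Schäl's condition on relative discounted values and is assumed in the lemma.

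\textbf{Applying Theorem~3.8.} Schäl's Theorem~3.8 then gives three things. First, $\lim_{\alpha\to1}(1-\alpha)m_\alpha=g^*$. Second, with $w:=\liminf_{\alpha\to1}w_\alpha$, the pair $(g^*,w)$ satisfies the average-cost optimality inequality. Third, the stationary policy $\pi^*$ that selects a minimizer of that inequality is average optimal. The second equality in \eqref{eq:convergenceAverageCost} is then immediate: $v_\alpha=w_\alpha+m_\alpha$ and $0\le w_\alpha\le\sup_{\alpha'<1}w_{\alpha'}(s,\theta)<\infty$, so $(1-\alpha)w_\alpha(s,\theta)\to0$ pointwise.

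\textbf{Main obstacle: matching the criteria.} The step I expect to be hardest is reconciling Schäl's optimality criterion, an expected long-run average cost per unit time, with our pathwise ratio criterion \eqref{eq:gdef}. I would use the regenerative structure under a stationary policy: every replacement starts an i.i.d.\ cycle, and by Lemma~\ref{lemma:ExpectedLifetime} each cycle has finite expected length and finite expected cost. The renewal-reward theorem then shows that, almost surely, the time average equals the cycle ratio $\E[C_1]/\E[T_1]$. This identifies $g(\pi^*)$ with Schäl's average cost of $\pi^*$, namely $g^*$. For non-stationary policies, the inequality $g(\pi)\ge g^*$ should follow by the same cycle argument once the optimality inequality is iterated along each cycle.
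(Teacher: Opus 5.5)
Your route is the paper's. The paper gives no argument for this lemma beyond citing Sch\"al's Theorem~3.8, and your checks are exactly what that citation tacitly needs: a Borel state space $\R_+\times\Theta$; two actions, so Sch\"al's compactness and continuity-in-the-action conditions hold automatically; one-step cost in $[0,c_p+c_f+\ell(L)]$, so $(1-\alpha)m_\alpha$ is bounded; and a measurable first-minimizer selector. Deducing the second equality in \eqref{eq:convergenceAverageCost} from $(1-\alpha)w_\alpha(s,\theta)\to0$ is correct. So is identifying the pathwise ratio $g(\pi^*)$ of the stationary $\pi^*$ with Sch\"al's expected average cost via renewal--reward (a.s.\ and in expectation). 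You are also right that Sch\"al's criterion is an expected average and not the pathwise ratio \eqref{eq:gdef}; the paper passes over this silently.

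The step that fails as written is the lower bound for non-stationary policies. Write $\rho^*$ for Sch\"al's optimal value. To conclude $g^*=\rho^*=g(\pi^*)$ you need $g(\pi)\ge\rho^*$ a.s.\ for every $\pi\in\Pi_\O$. The inequality Sch\"al provides, $\rho^*+w(x)\ge\min_a\{c(x,a)+\E[w(x')\mid x,a]\}$, only bounds the cost of the minimizing selector from above. Summed along the actions of an arbitrary $\pi$, it says nothing about $\pi$'s costs. The cycles of a history-dependent $\pi$ are also not i.i.d., so renewal--reward is unavailable.

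A per-cycle submartingale argument works instead.
\begin{enumerate}[(i)]
\item Take the policy that repeats one fixed single-component stopping rule. Its cycles are i.i.d., so renewal--reward applies, and Sch\"al's optimality over all policies gives $\E[C-\rho^*T]\ge0$ for every such rule.
\item Let $\mathcal{G}_{i-1}$ be the $\sigma$-field of the complete paths and parameters of components $1,\dots,i-1$. Component $i$ is independent of $\mathcal{G}_{i-1}$, and given $\mathcal{G}_{i-1}$ the policy acts on component $i$ as such a fixed rule. Hence $\E[C_i-\rho^*T_i\mid\mathcal{G}_{i-1}]\ge0$.
\item Replacement is forced at failure, so $|C_i-\rho^*T_i|\le c_p+c_f+(\ell(L)+\rho^*)T^{\mathrm{fail}}_i$. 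Here $T^{\mathrm{fail}}_i$ is the run-to-failure lifetime of component $i$ on the same path. These are i.i.d., independent of $\mathcal{G}_{i-1}$, and integrable by Lemma~\ref{lemma:ExpectedLifetime}.
\item Kolmogorov truncation then gives a martingale strong law, so $\liminf_n n^{-1}\sum_{i\le n}(C_i-\rho^*T_i)\ge0$ a.s.
\item Finally, $1\le n^{-1}\sum_{i\le n}T_i\le n^{-1}\sum_{i\le n}T^{\mathrm{fail}}_i\to\E[T^{\mathrm{fail}}_1]<\infty$. This yields $\liminf_n \sum_{i\le n}C_i/\sum_{i\le n}T_i\ge\rho^*$ a.s., which is the missing identification.
\end{enumerate}
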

The critical step is to prove that condition (A) in Lemma \ref{lemma:Thm3.8Schal} holds for our problem.
To do this, we use Lemma 4.1 in \cite{schal1993} which is restated (a customized version) as follows.
For some $\eta>0$, let $\mathcal{A}_{S,\eta} \subseteq [0,L]$ and $\mathcal{A}_{\theta,\eta} \subseteq \Theta$ be such that $v(s,\theta) \leq m_\alpha+\eta $ for all $s \in \mathcal{A}_{S,\eta}$, and $\theta \in \mathcal{A}_{\theta,\eta}$.
Notice that $v(s,\theta)$ is non-decreasing in $s$ and that $\mathcal{A}_{S,\eta}$ and $\mathcal{A}_{\theta,\eta}$ cannot be empty as $\eta$ is positive and  $m_\alpha$ is the infimum of $v(s,\theta)$ over the state space.
Define the stopping time $n_\alpha:= \displaystyle \inf_{i\in \N} \left\{\displaystyle i:S_{T_i+1} \in \mathcal{A}_{s,\eta}, \theta_{T_i+1} \in\mathcal{A}_{\theta,\eta} \right\}$.
\begin{lemma} \citep[adapted from Lemma 4.1.][]{schal1993}
    \label{lemma:Lemma4.1Schal}
    For $\eta \geq 0$, $\alpha<1$, $(s,\theta) \in [0,L] \times \Theta$: $w_\alpha(s,\theta) \leq \eta + \inf_\pi \E [\sum_{i=1}^{n_\alpha} C(S_{T_i(\pi)+1},\theta_{T_i(\pi)+1}) ]$.
\end{lemma}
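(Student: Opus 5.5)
The plan is to follow the standard argument behind Lemma 4.1 in \cite{schal1993}. I would compare $v_\alpha(s,\theta)$ with the discounted cost of an arbitrary policy that is run until the process first enters the ``near-minimal'' region $\mathcal{A}_{S,\eta}\times\mathcal{A}_{\theta,\eta}$. From that moment on, the policy switches to an $\alpha$-optimal policy.

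Two preliminary observations are needed.
\begin{enumerate}[(i)]
\item All one-period costs are non-negative, since $c_p,c_f\geq 0$ and $\ell\geq 0$. Hence $v_\alpha\geq 0$ and $m_\alpha\geq 0$.
\item The discounted optimality equations \eqref{eq:discOPTEq} have a solution $v_\alpha$, and it equals the optimal discounted value. One gets this from the usual contraction argument: costs are bounded over a cycle and the action set is finite. This also gives a principle of optimality at stopping times.
\end{enumerate}

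Fix an arbitrary policy $\pi$ and consider the stopping time $\sigma$. This is the epoch right after the $n_\alpha$-th renewal, i.e.\ the first renewal epoch at which the new state $(S,\theta)$ lies in $\mathcal{A}_{S,\eta}\times\mathcal{A}_{\theta,\eta}$. Define the policy $\pi'$ that follows $\pi$ up to $\sigma$ and then follows an $\alpha$-optimal stationary policy. The principle of optimality (strong Markov property of $\{S_{N(t),A(t)},\theta_{N(t)}\}$ together with \eqref{eq:discOPTEq}) then yields
\[
v_\alpha(s,\theta)\leq \E\Big[\sum_{t<\sigma}\alpha^{t}\,c_t\Big]+\E\big[\alpha^{\sigma}v_\alpha(S_\sigma,\theta_\sigma)\big],
\]
where $c_t$ denotes the cost incurred in period $t$. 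I would then bound the two terms separately.
\begin{itemize}
\item \emph{Second term.} We have $\alpha^\sigma\leq 1$ and $v_\alpha\geq 0$. Moreover, by construction $(S_\sigma,\theta_\sigma)$ lies in the region where $v_\alpha\leq m_\alpha+\eta$. So the second term is at most $m_\alpha+\eta$.
\item \emph{First term.} Drop the discount factors, which only increases this non-negative sum. Then regroup the costs by replacement cycles. Up to $\sigma$ there are exactly $n_\alpha$ complete cycles, and the cost of cycle $i$ is $C(S_{T_i(\pi)+1},\theta_{T_i(\pi)+1})$ in the notation of the statement. So the first term is at most $\E\big[\sum_{i=1}^{n_\alpha}C(S_{T_i(\pi)+1},\theta_{T_i(\pi)+1})\big]$.
\end{itemize}
Subtracting $m_\alpha$ from both sides gives
\[
w_\alpha(s,\theta)\leq \eta+\E\Big[\sum_{i=1}^{n_\alpha}C(S_{T_i(\pi)+1},\theta_{T_i(\pi)+1})\Big].
\]
The right-hand side does not depend on $\alpha$ except through $n_\alpha$. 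Since $\pi$ was arbitrary, taking the infimum over $\pi$ completes the argument.

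The main obstacle is making the principle of optimality at the random time $\sigma$ rigorous on the general state space $[0,L]\times\Theta$. This requires measurable selection of an $\alpha$-optimal continuation (or an $\epsilon$-optimal one, with $\epsilon\downarrow 0$ at the end) and measurability of $\mathcal{A}_{S,\eta}\times\mathcal{A}_{\theta,\eta}$. I would handle this by defining the region as a sublevel set $\{v_\alpha\leq m_\alpha+\eta\}$, which is measurable because $v_\alpha$ is measurable. Measurability of $v_\alpha$ follows from A.2 and the fixed-point construction, and the finite action set makes the selection trivial.

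A second point to check is that the cycle indexing is consistent. The interval up to $\sigma$ must consist of complete cycles, with the cost of the final replacement included. The state at $\sigma$ must be that of the freshly installed component, namely $S=X_{\cdot,1}$ together with a new $\theta$ drawn from $F_\theta$. If $n_\alpha=\infty$ with positive probability, the right-hand side is $+\infty$ and the bound holds trivially.
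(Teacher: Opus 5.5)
Your argument is correct and is essentially the standard stopping-time proof of Lemma 4.1 in \cite{schal1993}, which the paper cites rather than reproves. In that proof, because costs are nonnegative, $v_\alpha(s,\theta)$ is bounded by the undiscounted cost of $\pi$ up to the hitting time $\sigma$ of the near-minimal region, plus $\E[\alpha^\sigma v_\alpha(S_\sigma,\theta_\sigma)]\le m_\alpha+\eta$. Your measurability and selection worries are lighter than you suggest. The inequality you need follows from iterating the Bellman inequality (one-step cost under any action plus $\alpha\E[v_\alpha(\cdot)]$) up to $\sigma\wedge N$, using that $v_\alpha$ is bounded. It also holds for any measurable subset of $\{v_\alpha\le m_\alpha+\eta\}$, so you do not need to redefine the region; the paper's product set $\mathcal{A}_{S,\eta}\times\mathcal{A}_{\theta,\eta}$ works as long as it is measurable.
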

Observe that the sequences $\{S_{T_i+1}\}_{i\in \N}$ and $\{\theta_{N(T_i+1)}\}_{i\in \N}$ are i.i.d.
Therefore, $n_\alpha$ is a geometric random variable and has a finite expectation.
Define $t_\alpha = \displaystyle \sum_{i=1}^{n_\alpha} T_i$.
By Wald's identity and Lemma \ref{lemma:ExpectedLifetime}, $\E[t_\alpha]= \E[n_\alpha]\E[T_i]<\infty$.
Now, By Lemma \ref{lemma:Lemma4.1Schal}, $w_\alpha (s, \theta) \leq \eta + (c_p+c_f+\ell(L)) \E[t_\alpha]<\infty$ which proves that condition (A) in Lemma \ref{lemma:Thm3.8Schal} holds.
Condition (B) in Lemma \ref{lemma:Thm3.8Schal} obviously holds, since we have binary decisions for each state.
Thus, Part \ref{thm:ExistNStrucOPTHeteroOPTPolicy} of Theorem \ref{thm:ExistNStrucOPTHetero} follows from Lemma \ref{lemma:Thm3.8Schal}.

Finally, there exists a threshold policy that satisfies the optimality Equations \eqref{eq:discOPTEq} that complete the proof for Part \ref{thm:ExistNStrucOPTHeteroThreshPolicy}. \Halmos

\section{Proof of Lemma \ref{lemma:CostRateHetero}}
\label{section:Proof_lemma:CostRateHetero}
Let $\pi$ be a policy that determines the replacement actions based on the available information of the operating component.
Under this structure, maintenance interventions disregard any history of actions or observations associated with previously installed components.
Consequently, the sequences $\{C_i\}_i\in \N$ and $\{T_i\}_{i \in \N}$ are i.i.d., as each component is drawn from the same population.
The result then follows directly from the renewal–reward theorem (cf. Theorem VI.3.1. of \cite{Asmussen2003}) combined with Lemma \ref{lemma:ExpectedLifetime}.

\begin{remark}
    % Equation \eqref{eq:KSLLN_Application} holds even when replacement decisions differ for components with the same parameter, as long as these decisions depend only on the history of the operating component.
    % In that case, the application of the Kolmogorov law of large numbers on the numerator and denominator of \eqref{eq:KSLLN_Application} yields
    % \begin{equation}
    %     \label{eq:GeneralSLLNDef}
    %     g(\xi) = \frac{\displaystyle \int_\Theta\E[\E[C(\xi(\yb),\yb)\mid \xi(\yb)]] dF_\theta(\yb)}{\displaystyle \int_\Theta\E[\E[T(\xi(\yb),\yb)\mid \xi(\yb)]] dF_\theta(\yb)}.
    % \end{equation}
    This argument is more general than the Lemma’s statement and remains valid for any replacement policy that depends only on the information of the operating component. In that case, $g(\xi)$ can be expressed as
    \begin{equation}
        \label{eq:GeneralSLLNDef}
        g(\xi) = \frac{\displaystyle \int_\Theta\E[\E[C(\xi(\yb),\yb)\mid \xi(\yb)]] dF_\theta(\yb)}{\displaystyle \int_\Theta\E[\E[T(\xi(\yb),\yb)\mid \xi(\yb)]] dF_\theta(\yb)},
    \end{equation}
    (cf. Equation~\ref{eq:SLLNDef}).
    We will use this result later when we introduce our adaptive policies, in which the replacement decisions are made based on the available degradation data of the current component. \Halmos
\end{remark}

\section{Proof of Theorem \ref{thm:lambda*}}
\label{section:Proof_thm:lambda*}
The proof of Theorem \ref{thm:lambda*} is adapted from the Appendix of \cite{Aven1986}.
We present a tailored version for completeness.
\begin{lemma}
    \label{lemma:sub_lambda*}
    \begin{enumerate}[(a)]
        \item \label{lemma:sub_lambda*_aboveg*} For $\lambda> g^*$, $\displaystyle \int_\Theta \hT(\lambda, \xi_\lambda, \yb) dF_\theta(\yb)<0$, and equivalently, $g(\xi_\lambda)<\lambda$.
        % \item \label{lemma:sub_lambda*_belowg*} For $\lambda< g^*$, $\displaystyle \int_\Theta \hT(\lambda, \xi_\lambda, \yb) dF_\theta(\yb)>0$, and equivalently, $g(\xi_\lambda)>\lambda$.
        \item \label{lemma:sub_lambda*_monoconcavity} $\displaystyle \int_\Theta \hT(\lambda, \xi_\lambda, \yb) dF_\theta(\yb)$ is non-increasing and concave in $\lambda$.
        \item \label{lemma:sub_lambda*_distance} For $\lambda_1, \lambda_2 \in \R$ satisfying $g^*< \lambda_1 < \lambda_2$, we have
        \[
            (\lambda_2 - \lambda_1) \displaystyle\int_\Theta \E \left[T(\xi_{\lambda_1},\yb)  \right] dF_\Theta(\yb)<-\displaystyle\int_\Theta\E \left[\hT \left(\lambda_2, \xi_{\lambda_2}, \yb \right) \right]dF_\Theta(\yb)<0.
        \]
    \end{enumerate}
\end{lemma}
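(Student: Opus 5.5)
The plan is to reduce everything to the function
\[
H(\lambda):=\int_\Theta \hT(\lambda,\xiT_\lambda(\yb),\yb)\,dF_\theta(\yb)=\inf_{\xi\in\Xi}\left(\int_\Theta \E[C(\xi(\yb),\yb)]\,dF_\theta(\yb)-\lambda\int_\Theta \E[T(\xi(\yb),\yb)]\,dF_\theta(\yb)\right).
\]
I read $\xi_\lambda$ in the statement as the exact minimizer $\xiT_\lambda$ of $\hT$. The first step is to write $\hT$ as $\hT(\lambda,x,\theta)=\E[C(x,\theta)]-\lambda\,\E[T(x,\theta)]$. This uses Wald's identity $\E[T(x,\theta)]=(x+\E[\YT_1(x)\mid\theta_1=\theta])/\mu(\theta)$, exactly as below \eqref{eq:SLLN}; the expectation $\E[\cdot]$ around $\hT$ in part (c) is then redundant.

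Because the minimization in \eqref{eq:hTDef} is pointwise in $\theta$, minimizing pointwise gives the infimum over threshold functions. Here I have to make sure the pointwise minimizer exists and can be chosen measurably in $\theta$. The domain $[0,L]$ is compact, or a finite lattice in the $d$-lattice case. By Assumptions A.2 and A.3, $\hT$ is lower semicontinuous (continuous in the non-lattice case) in $x$, so a measurable-selection argument should give a measurable minimizer. I expect this technical step to be the main obstacle. If it fails, I would simply define $H$ as the infimum over $\Xi$ and work with $\epsilon$-minimizers, since every inequality below is preserved in the limit. By Lemma \ref{lemma:ExpectedLifetime}, all the integrals involved are finite, and for every $\xi$ we have $1\le \int_\Theta\E[T(\xi(\yb),\yb)]\,dF_\theta(\yb)<\infty$.

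\emph{Part (a).} Plug the Oracle-optimal $\xio$ into the infimum, using Theorem \ref{thm:ExistNStrucOPTHetero} and Lemma \ref{lemma:CostRateHetero}, which give $\int\E[C(\xio)]\,dF_\theta=g^*\int\E[T(\xio)]\,dF_\theta$. This yields
\[
H(\lambda)\le (g^*-\lambda)\int_\Theta\E[T(\xio(\yb),\yb)]\,dF_\theta(\yb)<0\qquad\text{for }\lambda>g^*.
\]
For the equivalence, note that $H(\lambda)=\int\E[C(\xiT_\lambda)]\,dF_\theta-\lambda\int\E[T(\xiT_\lambda)]\,dF_\theta$. Dividing by the positive, finite denominator and using \eqref{eq:SLLNDef} shows that $H(\lambda)<0$ if and only if $g(\xiT_\lambda)<\lambda$.

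\emph{Part (b).} $H$ is a pointwise infimum, over $\xi\in\Xi$, of functions that are affine in $\lambda$. Each such function has slope $-\int\E[T(\xi)]\,dF_\theta\le -1$. An infimum of affine functions is concave, and an infimum of non-increasing functions is non-increasing; in fact $H$ is strictly decreasing.

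\emph{Part (c).} Take $g^*<\lambda_1<\lambda_2$ and use $\xiT_{\lambda_1}$ as a feasible competitor at level $\lambda_2$. This gives
\[
H(\lambda_2)\le \int\E[C(\xiT_{\lambda_1})]\,dF_\theta-\lambda_2\int\E[T(\xiT_{\lambda_1})]\,dF_\theta=H(\lambda_1)-(\lambda_2-\lambda_1)\int\E[T(\xiT_{\lambda_1})]\,dF_\theta .
\]
Part (a) gives $H(\lambda_1)<0$, so
\[
-H(\lambda_2)\ge (\lambda_2-\lambda_1)\int\E[T(\xiT_{\lambda_1})]\,dF_\theta-H(\lambda_1)>(\lambda_2-\lambda_1)\int\E[T(\xiT_{\lambda_1})]\,dF_\theta .
\]
The remaining inequality $-H(\lambda_2)>0$ is again part (a), applied at $\lambda_2>g^*$.
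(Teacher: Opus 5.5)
Your proof is correct and follows the paper's argument essentially step for step. In part (a) you compare the minimizer against a threshold policy whose cost rate is below $\lambda$: you use $\xio$, while the paper uses an arbitrary $\xi$ with $g^*\le g(\xi)<\lambda$. Part (b) rests on the same infimum-of-affine-functions structure, and part (c) uses $\xiT_{\lambda_1}$ as a competitor at level $\lambda_2$, exactly as the paper does. Your reading of $\xi_\lambda$ as the exact minimizer of $\hT$ matches how the paper's proof uses it, and your attention to measurable selection is extra care the paper omits.
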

By the arguments in Appendix \ref{section:Proof_lemma:CostRateHetero} and the cost structure, for all $\xi \in \Xi$
\[
    0< \displaystyle\int_\Theta \E \left[C(\xi(\yb),\yb) dF_\Theta(\yb) \right]< \infty, \qquad \mbox{and} \qquad 1\leq \displaystyle\int_\Theta \E \left[T(\xi(\yb),\yb) dF_\Theta(\yb) \right]<\infty.
\]
Therefore, $0 < g(\xi)<\infty$, and for all $\lambda \in \R$ we have $\Bigg \vert \displaystyle\int_\Theta\E \left[\hT \left(\lambda, \xi, \yb \right) \right]dF_\Theta(\yb) \Bigg \vert <\infty$.
\proof{Proof of Lemma \ref{lemma:sub_lambda*}.}
\begin{enumerate}[(a)]
    \item Let $\xi \in \Xi$ satisfy $g^* \leq g(\xi)<\lambda$. Then,
    \[
        \displaystyle \int_\Theta \hT(\lambda, \xi_\lambda, \yb) dF_\theta(\yb) \leq \displaystyle \int_\Theta \hT(\lambda, \xi, \yb) dF_\theta(\yb) < \displaystyle \int_\Theta \hT(g(\xi), \xi_\lambda, \yb) dF_\theta(\yb) =0.
    \]
    By Equation \eqref{eq:SLLN}, $g(\xi_\lambda) < \lambda$.
    % \item The proof of part \ref{lemma:sub_lambda*_belowg*} follows from $\lambda <g^* \leq g(\xi_\lambda)$.
    \item We first prove the monotonicity.
    Let $\lambda_1 \leq \lambda_2$, then
    \[
         \displaystyle \int_\Theta \hT(\lambda_2, \xi_{\lambda_2}, \yb) dF_\theta(\yb) \leq \displaystyle \int_\Theta \hT(\lambda_2, \xi_{\lambda_1}, \yb) dF_\theta(\yb) \leq \displaystyle \int_\Theta \hT(\lambda_1, \xi_{\lambda_1}, \yb) dF_\theta(\yb).
    \]
    Next we show the concavity. Let for $\lambda_1,\lambda_2 \in \R$ and $\alpha \in [0,1]$, $\lambda= \alpha\lambda_1+(1-\alpha)\lambda_2$, then
    \begin{alignat*}{2}
        \displaystyle \int_\Theta \hT(\lambda, \xi_\lambda, \yb) dF_\theta(\yb) = & \alpha \displaystyle \int_\Theta \hT(\lambda_1, \xi_\lambda, \yb) dF_\theta(\yb) + (1-\alpha) \displaystyle \int_\Theta \hT(\lambda_2, \xi_\lambda, \yb) dF_\theta(\yb)\\
        \geq & \alpha \displaystyle \int_\Theta \hT(\lambda_1, \xi_{\lambda_1}, \yb) dF_\theta(\yb) + (1-\alpha) \displaystyle \int_\Theta \hT(\lambda_2, \xi_{\lambda_2}, \yb) dF_\theta(\yb).
    \end{alignat*}
    \item Let $g^* < \lambda_1<\lambda_2$, then
    \begin{alignat*}{2}
        \displaystyle \int_\Theta \hT(\lambda_2, \xi_{\lambda_2}, \yb) dF_\theta(\yb)\leq &
        \displaystyle \int_\Theta \hT(\lambda_2, \xi_{\lambda_1}, \yb) dF_\theta(\yb) = \displaystyle \int_\Theta \E[C({\lambda_1}, \xi_{\lambda_1}, \yb)]-\lambda_2  \E[T({\lambda_1}, \xi_{\lambda_1}, \yb)] dF_\theta(\yb)\\
        = &\displaystyle \int_\Theta \E[C({\lambda_1}, \xi_{\lambda_1}, \yb)]-\lambda_1  \E[T({\lambda_1}, \xi_{\lambda_1}, \yb)] -(\lambda_2-\lambda_1)  \E[T({\lambda_1}, \xi_{\lambda_1}, \yb)] dF_\theta(\yb)\\
        = & \displaystyle \int_\Theta \hT({\lambda_1}, \xi_{{\lambda_1}}, \yb) dF_\theta(\yb) - \displaystyle \int_\Theta (\lambda_2-\lambda_1)  \E[T({\lambda_1}, \xi_{\lambda_1}, \yb)] dF_\theta(\yb) \\
        < & - \displaystyle \int_\Theta (\lambda_2-\lambda_1)  \E[T({\lambda_1}, \xi_{\lambda_1}, \yb)] dF_\theta(\yb)
    \end{alignat*}
    Notice that by Part \ref{lemma:sub_lambda*_aboveg*}, both $\displaystyle \int_\Theta \hT(\lambda_2, \xi_{\lambda_2}, \yb) dF_\theta(\yb)$ and $\displaystyle \int_\Theta \hT(\lambda, \xi_{\lambda}, \yb) dF_\theta(\yb)$ are strictly positive which completes the proof. \Halmos
\end{enumerate}
\endproof

\proof{Proof of Theorem \ref{thm:lambda*}.}
Let $\lambda_1 \in \R$, then $\lambda_2=g(\xi_{\lambda_1})\geq g^*$.
We can verify by induction and using part \ref{lemma:sub_lambda*_aboveg*} of Lemma \ref{lemma:sub_lambda*} that for all $j\geq2$, $g^* \leq \lambda_{j+1} = g(\xi_{\lambda_j})<\lambda_j$.
If $\lambda_j=g^*$, then $\lambda_{j+1} = \lambda_j=g^*$.
Therefore, there exists a $\lambda_\infty\geq g^*$ such that $\lambda_\infty \leq \lambda_j$ for all $j \in \N$ and $\displaystyle\lim_{j \to \infty} \lambda_j -\lambda_\infty=0$.
Notice that by Equation \eqref{eq:ET_bounds}, $\displaystyle\int_\Theta \E \left[T(\xi_{\lambda_j},\yb)  \right] dF_\Theta(\yb)$ is bounded for all $j\in \N$.
Thus, by Parts \ref{lemma:sub_lambda*_monoconcavity} and \ref{lemma:sub_lambda*_distance} of Lemma \ref{lemma:sub_lambda*},
\[
    \lim_{j \to \infty} \displaystyle\int_\Theta\E \left[\hT \left(\lambda_j, \xi_{\lambda_j}, \yb \right) \right]dF_\Theta(\yb)=0,
\]
which establishes $\lambda_\infty=g^*$. \Halmos

\section{Proof of Lemma \ref{lemma:elementwise_convergence}}

We provide the proof for the continuous degradation process; the argument for the $d$-lattice case is similar, with integration over the degradation level replaced by summation. 

\noindent
Proof of Part \ref{lemma:elementwise_convergence_per_theta}:
For notational convenience, we omit the conditioning on $\theta_1$, $\theta$, and the index $1$.
We have,
\begin{alignat}{2}
    \label{eq:StateDepTotalCost}
    \E \left[\sum_{\tau=1}^{T \left(\xi \right)} \ell(S_\tau,k) \right] = & \sum_{\tau=1}^\infty \E \left[\ell(S_\tau,k); T \left(\xi \right) \geq \tau \right] \nonumber\\
    =&\int_0^\infty \ell(x,k)dF_X(x) + \sum_{\tau=1}^\infty\int_0^{\xi}\int_0^\infty \ell(s+x,k)dF_X(x)dF_X^{*\tau}(s) \nonumber\\
    =& \E[\ell(X,k)] + \int_0^{\xi}\E[\ell(X+s,k)]dU(s)
\end{alignat}
Define for $k>0$
\begin{equation}
    \label{eq:wnDef}
    w_k(s):=\E \left [\ellT \left(\frac{X+s}{k} \right) \right],
\end{equation}
and observe that $w_k(\cdot)\leq \ellT \left(\LT \right)$.
Then Equation \eqref{eq:StateDepTotalCost} gives
\begin{equation}
    \label{eq:StateDepTotalCost_n}
    % \frac{1}{k}
    \E \left[\sum_{\tau=1}^{T\left(\xi^{(k)}\right)} \ell(S_\tau,k) \right]= w_k(0) + \int_0^{\xi^{(k)}}w_k(s)dU(s).
\end{equation}
Note that $w_k(\cdot)$ is not directly Riemann integrable, since $\displaystyle \int_0^\infty w_k(s)ds$ diverges, for every $k>0$. Thus the key renewal theorem \citep[cf. Theorem V.4.7.][]{Asmussen2003} cannot be applied immediately, on the RHS of Equation \eqref{eq:StateDepTotalCost_n}.
We, therefore, adopt a more elaborate approach to establish the result.

By renewal theory there exists a function $U_c:\R_+ \to \R_+$ such that
\[
    U(s)=\frac{s}{\mu} + \frac{\E[Y]}{\mu} + U_c(s),
\]
with
\begin{subequations}
    \begin{alignat}{2}
        \vert U_c(s)\vert\leq \frac{\E[Y]}{\mu}, \label{eq:U_c_UB}\\
        \lim_{s \to \infty} U_c(s)=0, \label{eq:U_c_Limit}
    \end{alignat}
\end{subequations}
\citep[cf. Propositions V.6.1 and V.6.2][]{Asmussen2003}.
Then we have
\begin{equation}
    \label{eq:IntDecompOnU}
    \int_{0}^{\xi^{(k)}} w_k(s) dU(s) - \frac{1}{\mu}\int_{0}^{\xi^{(k)}} w_k(s) ds =
    \int_{0}^{\xi^{(k)}} w_k(s) dU_c(s).
\end{equation}
We use integration by part to obtain
\begin{equation}
    \label{eq:Int_gap}
    \int_{0}^{\xi^{(k)}} w_k(s) dU_c(s) = w_k\left(\xi^{(k)} \right)U_c\left(\xi^{(k)} \right) - w_k(0)U_c(0)- \int_{0}^{\xi^{(k)}} U_c(s) dw_k(s).
\end{equation}
By the lemma's assumption as $k \to \infty$, $\xi^{(k)} \to \infty$. Thus, by \eqref{eq:U_c_Limit}, and boundedness of $w_k(\cdot)$
\begin{equation}
    \label{eq:Int_gap_1}
    \lim_{k\to \infty}w_k\left(\xi^{(k)} \right)U_c \left(\xi^{(k)} \right)=0
\end{equation}
Next, observe that, $\ellT \left(\dfrac{X}{k} \right)\leq \ellT(\LT)$, and
\[
    \lim_{k \to \infty}\ellT \left(\frac{X}{k} \right) U_c (0) = 0, 
\]
since $U_c (0)$ is bounded by \eqref{eq:U_c_UB}.
Therefore by Lebesgue's dominated convergence theorem
\begin{equation}
    \label{eq:Int_gap_2}
    \lim_{k\to \infty}w_k(0)U_c (0) = \lim_{k\to \infty} \E \left[ \ellT \left(\frac{X}{k} \right) \right] U_c (0)=0
\end{equation}
\citep[cf. Theorem 16.4][]{Billingsley1995}.

The remainder of the proof is devoted to showing that the final term on the RHS of \eqref{eq:Int_gap} vanishes as $k$ approaches infinity.
We organize the remainder of the proof into two steps.

\textbf{Step I:}
We construct a sequence $\left\{s_k \in \left[0,\LT \right] \right\}_{k \gg 0}$ such that as $k \to \infty$, we have $s_k \to 0$, and $ks_k \to \infty$:\\
By continuity of $\ellT$, for every $\epsilon_0>0$ there exists $s_0 \in (0,\LT]$ such that for all $s \in [0,s_0)$, we have $\ellT(s) < \epsilon_0$.
Additionally, by Assumption \ref{assum:ell0}, for every $\epsilon_r>0$ there exists $s_r \in (0,\LT]$ such that for all $s \in [0,s_r)$, we have $\ellT(s)/s < \epsilon_r/2$.
Choose $k>0$ such that $1/k< \min\{\epsilon_0,\epsilon_r/2\}$.
Then there exist $s_k \in \left(0,\dfrac{\min\{s_0,s_r\}}{2} \right)$ such that $\ellT(2s_k)= 1/k$ and $ks_k=s_k/\ellT(2s_k) > 1/\epsilon_r$.

This construction implies, also, that $\xi^{(k)}/ks_k \to \infty$ as $k \to \infty$, because by the lemma's assumption $ \lim_{k\to \infty}\xi^{(k)}/k>0$.
Hence, we restrict attention to sufficiently large $k$ with $\xi^{(k)} \geq ks_k$.

\textbf{Step II:}
We partition the integration domain of the RHS of \eqref{eq:Int_gap} into $[0,ks_k]$ and $(ks_k, \xi^{(k)}]$.
First, for any $s \in [0,ks_k)$ we find an upper bound for $w_k(s)$ as follows
\begin{alignat*}{2}
    \label{eq:EellTBounds}
    w_k(s) =& \int_0^\infty \ellT \left(\frac{x+s}{k} \right) dF_X(x) = \int_0^{ks_k} \ellT \left(\frac{x+s}{k} \right) dF_X(x) + \int_{ks_k}^\infty \ellT \left(\frac{x+s}{k} \right) dF_X(x) \nonumber\\
    \leq & \ellT(2s_k)F_X(ks_k)+\ellT \left(\LT \right) \left(1-F_X(ks_k) \right)
    \leq \frac{1}{k} + \ellT \left(\LT \right) \left(1- F_X \left(ks_k \right) \right).
\end{alignat*}
This together with \eqref{eq:U_c_UB} leads us to
\begin{equation}
    \label{eq:Step_aII_1}
    \Bigg \vert\int_0^{ks_k}U_c(s) dw_k(s) \Bigg \vert \leq\kappa_1(k),
\end{equation}
where
\begin{equation}
    \label{eq:kappa1}
    \kappa_1(k):= \frac{ \E [Y ]}{\mu}\left( \frac{1}{k} + \ellT \left(\LT \right) \left(1- F_X \left(ks_k \right) \right) \right).
\end{equation}

For the interval $\left(ks_k, \xi^{(k)}\right]$ we have
\begin{equation}
    \label{eq:Step_aII_2}
    \Bigg \vert\int_{ks_k}^{\xi^{(k)}} U_c(s) dw_k(s) \Bigg \vert \leq\kappa_2(k),
\end{equation}
where
\begin{equation}
    \label{eq:kappa2}
    \kappa_2(k):= \Bigg \vert \max_{s \in \left(ks_k,  \xi^{(k)}\right]}  U_c(s)  \Bigg\vert  \ellT \left(\LT \right).
\end{equation}

Now by \eqref{eq:Step_aII_1} through \eqref{eq:kappa2}
\begin{equation}
     \label{eq:intUcdwn_Bound}
     \Bigg \vert\int_0^{\xi^{(k)}} U_c(s) dw_k(s) \Bigg \vert \leq \Bigg \vert\int_0^{ks_k}U_c(s) dw_k(s) \Bigg \vert+\Bigg \vert\int_{ks_k}^{\xi^{(k)}} U_c(s) dw_k(s) \Bigg \vert  \leq \kappa_1(k)+ \kappa_2(k).
\end{equation}
By construction in Part I along with \eqref{eq:U_c_Limit},
\begin{equation}
    \label{eq:Int_gap_3}
    \lim_{k\to \infty}\kappa_1(k)= \lim_{k\to \infty}\kappa_2(k)=0.
\end{equation}
Equations \eqref{eq:IntDecompOnU}, \eqref{eq:Int_gap_1}, \eqref{eq:Int_gap_2}, and \eqref{eq:Int_gap_3} give
\[
    \lim_{k\to \infty} \left(\int_{0}^{\xi^{(k)}} w_k(s) dU(s) - \frac{1}{\mu}\int_{0}^{\xi^{(k)}} w_k(s) ds \right)=0,
\]
which completes the proof for Part \ref{lemma:elementwise_convergence_per_theta} of the lemma.

Proof of Part \ref{lemma:elementwise_convergence_E_Theta}:
By \eqref{eq:StateDepTotalCost_n}, \eqref{eq:IntDecompOnU}, and \eqref{eq:Int_gap} it is sufficient to show that the following limit exists and equals zero:
\[
    \lim_{k \to \infty}
    \int_\Theta\left( \Big\vert w_{k,\yb}\left(\xi^{(k)} (\yb) \right)U_c\left(\xi^{(k)}(\yb),\yb \right) \Big\vert + \Big\vert w_{k,\yb}(0,\yb)U_c(0,\yb ) \Big\vert+ \Big\vert \kappa_1(k,\yb)+\kappa_2(k,\yb) \Big\vert\right) dF_\theta(\yb),
\]
where $w_{k,\theta}(s)$, $U_c(\cdot,\theta)$, $\kappa_1(k, \theta)$, and $\kappa_2(k,\theta)$, are defined as in the proof of Part \ref{lemma:elementwise_convergence_per_theta}, except here they are parameterized by $\theta \in \Theta$.
Define $K: \Theta \to \R_+$ by
\[
    K(\theta):= \frac{\E\left[ Y(\theta) \right]}{\mu(\theta)}=\frac{\sigma^2(\theta)+\mu^2(\theta)}{2 \mu^2(\theta)}.
\]
For $k \gg 0$ and $\theta \in \Theta$:
\begin{subequations}
\begin{gather}
    \Big\vert w_{k,\theta}\left(0 \right)U_c\left(0,\theta \right) \Big\vert \leq \ellT \left(\LT \right) K(\theta),\label{eq:convUBS_1}\\
    \Big\vert w_{k,\theta}\left(\xi^{(k)}(\theta) \right)U_c\left(\xi^{(k)}(\theta),\theta \right) \Big\vert \leq \ellT \left(\LT \right) K(\theta),\label{eq:convUBS_2}\\
    \Big \vert \kappa_1(k,\theta)+\kappa(n,\theta) \Big \vert \leq 2 \ellT \left(\LT \right) K(\theta).\label{eq:convUBS_3}
\end{gather}
\end{subequations}
By Assumption \ref{assum:converge}, $K$ is integrable, i.e. $\displaystyle\int_\Theta K(\yb) dF_\theta(\yb)$ is finite.
Hence, the right-hand sides of \eqref{eq:convUBS_1} through \eqref{eq:convUBS_3} are integrable.
Moreover, by the arguments in the proof of Part \ref{lemma:elementwise_convergence_per_theta}, the left-hand sides of \eqref{eq:convUBS_1} through \eqref{eq:convUBS_3} converge to zero almost everywhere as $n$ grows to infinity.
Thus by Lebesgue's dominated convergence theorem we have
\begin{multline*}
    \lim_{k \to \infty}
    \int_\Theta\left( \Big\vert w_{k,\yb}\left(\xi^{(k)}(\yb) \right)U_c\left(\xi^{(k)}(\yb),\yb \right) \Big\vert + \Big\vert w_{k,\yb}(0,\yb)U_c(0,\yb ) \Big\vert+ \Big\vert \kappa_1(k,\yb)+\kappa_2(k,\yb) \Big\vert\right) dF_\theta(\yb)=\\
    \int_\Theta \lim_{k \to \infty}\left( \Big\vert w_{k,\yb}\left(\xi^{(k)}(\yb) \right)U_c\left(\xi^{(k)}(\yb),\yb \right) \Big\vert + \Big\vert w_{k,\yb}(0,\yb)U_c(0,\yb ) \Big\vert+ \Big\vert \kappa_1(k,\yb)+\kappa_2(k,\yb) \Big\vert\right) dF_\theta(\yb)=0.
\end{multline*}
The last equality follows from the arguments in the proof of Part \ref{lemma:elementwise_convergence_per_theta}.

Proof of Part \ref{lemma:elementwise_convergence_Prob}:
Since any probability distribution function is bounded, the result follows directly from the bounded convergence theorem \citep[cf. Theorem 16.5.][]{Billingsley1995} and Lemma \ref{lemma:Y_1 asymptotics}.

Proof of Part \ref{lemma:elementwise_convergence_CycleLength}:
Note that for any $k>0$ and $\theta \in \Theta$, we have $\xi^{(k)}\in [0,k\LT]$. Thus, by Lorden's inequality \citep{Lorden1970} we have for $k>1$ and $\theta \in \Theta$
\begin{equation}
    \label{eq:CLUB}
    \Bigg\vert\frac{1}{\mu(\theta)} \left(\frac{\xi^{(k)}(\theta)}{k} + \frac{\E\left[\YT_1 \left(\xi^{(k)}(\theta_1) \right) \Big \vert \theta_1=\theta \right]}{k}  \right) \Bigg\vert \leq \frac{1}{\mu(\theta)} \left( \LT+ 2\E[Y(\theta)] \right).
\end{equation}
By Assumption \ref{assum:converge}, the RHS of \eqref{eq:CLUB} is integrable.
Therefore, applying Lebesgue’s dominated convergence theorem together with Lemma \ref{lemma:Y_1 asymptotics} provides the result.
\Halmos

\endproof

\section{Proof of Lemma \ref{lemma:gasymptotic}}
Observe that for $k>0$ and $\xi^{(k)} \in \Xi^{(k)}$
\begin{equation}
    \label{eq:g_kFinitePositive}
    0<\frac{\cT_p}{\LT \displaystyle\int_\Theta\frac{1}{\mu(\yb)}dF_\theta(\yb) + 2\displaystyle\int_\Theta\frac{\E \left[\YT(\yb) \right]}{\mu(\yb)}dF_\theta(\yb) } \leq g_k\left(\xi^{(k)}\right) \leq \frac{\cT_p + \cT_f + \ellT(\LT)}{\LT \displaystyle\int_\Theta\frac{1}{\mu(\yb)}dF_\theta(\yb) }<\infty
\end{equation}
The lower bound on $g_k$ in \eqref{eq:g_kFinitePositive} results from minimizing the numerators and maximizing the denominators in \eqref{eq:g_k} and applying Lorden's inequality \citep{Lorden1970}.
The upper bound follows from the cost-rate of replacing at failure.
The upper bound is finite and the lower bound is strictly positive under Assumptions \ref{assum:converge} and \ref{assum:MeanPositivity}, respectively.
It follows from \eqref{eq:g_kFinitePositive} that $\lim_{k\to \infty} g_k \left(\xi^{(k)}\right)$ exists and is positive for any sequence $\left\{\xi^{(k)} \right\}_{k>0}$.
By Equation \eqref{eq:g_k} we have the following.
\begin{alignat}{2} 
    \label{eq:gnlimDef}
    g_k \left(\xi^{(k)} \right)= &  \frac{\cT_p + \displaystyle\int_\Theta \left(\cT_f \P \left\{\YT_1 \left(\xi^{(k)}(\theta_1) \right) > \phi_k \left(\xi^{(k)}(\theta_1) \right) \Big \vert \theta_1 = \yb \right\} + \frac{1}{k}\E \left[\sum_{\tau=1}^{T \left(\xi^{(k)}(\theta_1),\theta_1 \right)} \ell(S_{1,\tau},k) \Bigg\vert \theta_1=\yb \right]  \right) dF_\theta(\yb)}{\displaystyle \int_\Theta \frac{1}{\mu(\yb)} \left(\frac{\xi^{(k)}(\yb)}{k} + \frac{\E \left[\YT_1(\xi^{(k)}(\theta_1)) \Big \vert \theta_1 = \yb \right]}{k} \right) dF_\theta(\yb)}
\end{alignat}
The equality in \eqref{eq:gnlimDef} follows from dividing both the numerator and denominator by $k$.
By Lemma \ref{lemma:elementwise_convergence} and the growth rate of $\xi^{(k)}$ from Lemma's conditions, the limit of the numerator and denominator of are finite and positive as $k$ approaches infinity.
Therefore, $\lim_{k\to \infty} g_k \left(\xi^{(k)} \right)$ equals the ratio of the limits of the numerator and denominators in the RHS of \ref{eq:gnlimDef}.
Hence
\begin{alignat}{2} 
    \label{eq:gnlim}
    % \lim_{k\to \infty} g_k \left(\xi^{(k)} \right)=& 
    % \frac{\lim_{k\to \infty}\left(c_p(k) + \displaystyle\int_\Theta \left(c_f(k) \P \left\{\YT_1 \left(\xi^{(k)}(\theta_1) \right) > \phi_k \left(\xi^{(k)}(\theta_1) \right) \Big \vert \theta_1 = \yb \right\} + \E \left[\sum_{\tau=1}^{T \left(\xi^{(k)}(\theta_1),\theta_1 \right)} \ell(S_{1,\tau},k) \Bigg\vert \theta_1=\yb \right]  \right) dF_\theta(\yb) \right)}{\lim_{k\to \infty}\left(\displaystyle \int_\Theta \frac{1}{\mu(\yb)} \left(\frac{\xi^{(k)}(\yb)}{k} + \frac{\E \left[\YT_1(\xi^{(k)}(\theta_1)) \Big \vert \theta_1 = \yb \right]}{k} \right) dF_\theta(\yb) \right)} \nonumber\\
    \lim_{k\to \infty} g_k \left(\xi^{(k)} \right)=&
    \frac{\lim_{k\to \infty}\left(\cT_p + \displaystyle\int_\Theta \left(\cT_f \P \left\{Y(\yb) > \phi_k \left(\xi^{(k)}(\yb) \right) \right\} + \frac{1}{k} r_k \left(\xi^{(k)} ,\yb \right) \right) dF_\theta(\yb) \right)}{\lim_{k\to \infty}\left(\displaystyle \int_\Theta \frac{1}{\mu(\yb)} \left(\frac{\xi^{(k)}(\yb)}{k} + \frac{\E \left[Y(\yb) \right]}{k} \right) dF_\theta(\yb) \right)} \nonumber\\
    =&
    \lim_{k\to \infty}\frac{c_p(k) + \displaystyle\int_\Theta \left(c_f(k) \P \left\{Y(\yb) > \phi_k \left(\xi^{(k)}(\yb) \right) \right\} +  r_k \left(\xi^{(k)} ,\yb \right) \right) dF_\theta(\yb) }{\displaystyle \int_\Theta \frac{1}{\mu(\yb)} \left(\xi^{(k)}(\yb) + \E \left[Y(\yb) \right] \right) dF_\theta(\yb) } \nonumber\\
    =& \lim_{k\to \infty} \gamma_k \left(\xi^{(k)} \right)
\end{alignat}
The first equality in \eqref{eq:gnlim} follows from Lemma  \ref{lemma:elementwise_convergence}.
\Halmos

\section{Proof of Lemma \ref{lemma:threshold_order}}
We present the proof for Part \ref{lemma:threshold_order_xi}; the argument for Part \ref{lemma:threshold_order_xiT} is analogous.

Observe that $0\leq \liminf_{k \to \infty} \xio_k/k \leq \LT$.
Therefore, it is sufficient to show that $\liminf_{k \to \infty} \xio_k/k \not= 0$.
In the remainder of the argument, we fix $\theta \in \Theta$,
and suppress notation involving $\theta$ or conditioning on it, as well as the index for the component number.
For $k>0$ define
\begin{equation*}
    \hT_k(\lambda, \xi) := \cT_f \P \left\{\YT(\xi^{(k)}) > \phi_k \left(\xi^{(k)} \right) \right\} + \frac{1}{k}\E \left[\sum_{\tau=1}^{T(\xi^{(k)})} \ellT \left(\frac{S_{\tau}}{k} \right) \right]
    -\frac{\lambda}{\mu} \left(\frac{\xi^{(k)}}{k} + \frac{\E \left[\YT(\xi^{(k)}) \right]}{k}  \right).
\end{equation*}
By \eqref{eq:StateDepTotalCost} we have
\begin{multline}
    \label{eq:h_k_Def}
    \hT_k(\lambda, \xi) := \cT_f \P \left\{\YT(\xi^{(k)}) > k \LT - \xi^{(k)} \right\} \\
    + \frac{1}{k}\E\left[\ellT\left(\frac{X}{k}\right) \right] 
    + \frac{1}{k}\int_0^{\xi^{(k)}}\E\left[\ellT\left(\frac{X+s}{k}\right)  \right]dU(s)
    -\frac{\lambda}{\mu} \left(\frac{\xi^{(k)}}{k} + \frac{\E \left[\YT(\xi^{(k)}) \right]}{k}  \right).
\end{multline}
By Theorem \ref{thm:lambda*}, $\xio_n$ minimizes $\hT_k(g^*_k, \xi)$. In order to reach a contradiction, assume the contrary of the lemma, i.e., $\lim\inf_{k \to \infty} \xio_n/k = 0$.
This implies $\lim_{k \to \infty} \hT_k(g^*_k, \xio_n)=0$. The convergence of $\dfrac{1}{k}\displaystyle\int_0^{\xi^{(k)}}\E\left[\ellT\left(\frac{X+s}{k}\right)  \right]dU(s) \to \dfrac{1}{k \mu}\displaystyle\int_0^{\xi^{(k)}}\E\left[\ellT\left(\frac{X+s}{k }\right)  \right]ds$ follows from the argument in the proof of Lemma \ref{lemma:elementwise_convergence}\ref{lemma:elementwise_convergence_per_theta}.
Now consider the threshold function $\xi^{(k,c)}$ that satisfies
\[
    0< \lim_{k \to \infty}\E \left[\ellT\left(\frac{X+\xi^{(k,c)}}{k} \right) \right]<\lim_{k \to \infty} \frac{g^*_k}{\mu}.
\]
In this case $0<\lim_{k \to \infty} \xi^{(k,c)}/k\leq \LT$, as $\LT(0)=0$.
Then,
\begin{alignat*}{2}
    \lim_{k \to \infty} \hT_k(g^*_k, \xi^{(k,c)})=& \lim_{k \to \infty} \frac{1}{k \mu}\int_0^{\xi^{(k,c)}} \E \left[\ellT\left(\frac{X+s}{k} \right) \right] ds - \lim_{k \to \infty}\frac{g^*_k}{\mu}\lim_{k \to \infty} \frac{\xi^{(k,c)}}{k}\\
    \leq & \lim_{k \to \infty} \frac{\xi^{(k,c)}}{k \mu} \left( \lim_{k \to \infty}\E \left[\ellT\left(\frac{X+\xi^{(k,c)}}{k} \right)\right] - \lim_{k \to \infty}g^*_k\right)<0,
\end{alignat*}
which contradicts the assumption and therefore completes the proof.
\Halmos

\section{Proof of Theorem \ref{thm:gamma*}}
The proofs of Parts \ref{thm:gamma*_FixedPoint} and \ref{thm:gamma*_algorithm} proceed analogously to the argument in Appendix \ref{section:Proof_thm:lambda*}, with the functions $g$, $\hT$, and $\xiT_\lambda$ replaced by $\gamma$, $h$, and $\xi_\lambda$, respectively.

The monotonicity of $Dh$ in $x$ follows from the monotonicity of $F_X(x \mid \cdot)$  and $\ell(x)$.
If $F_X$ is continuous, the last part of Theorem \ref{thm:gamma*} follows from the first-order condition.
In case $F_X$ is $d-$lattice, note that
\[
    Dh(\lambda, x, \theta) = h(\lambda, d \Big\lfloor \frac{x}{d} \Big\rfloor, \theta) - h(\lambda, d \Big\lfloor \frac{x}{d}-1 \Big\rfloor, \theta).
\]
Thus, the minimum of $h$ can be achieved at the roots of $Dh(\lambda, x, \theta)$, or, when no root exists, at one of the boundary points. \Halmos

\section{Proof of Theorem \ref{thm:costconvergancenolearning}}
By Lemma \ref{lemma:gasymptotic} and \ref{lemma:threshold_order}, for any sufficiently small $\epsilon > 0$, there exists $k_{l_1}, k_{l_2} > 0$, such that,
\[
    -\frac{\epsilon}{2} < g_k \left(\xio_k \right) - \gamma_k \left(\xio_k \right) < \frac{\epsilon}{2}, \qquad \mbox{with } \quad \forall k > k_{l_1},
\]
and
\begin{equation*}
    % \label{ineq:ggh}
    -\frac{\epsilon}{2} < \gamma_k \left(\xia_k \right) - g_k \left(\xia_k \right) < \frac{\epsilon}{2}, \qquad \mbox{with} \quad \forall k > k_{l_2}.
\end{equation*}
Letting $k_l = \max (k_{l_1}, k_{l_2})$, it follows that,
\[
    g_k\left(\xia_k \right) - g_k(\xi_k^*) - \epsilon < \gamma_k\left(\xia_k \right) - \gamma_k(\xi_k^*),
    \qquad k > k_{l}.
\]
We notice that $g_k\left(\xia_k \right) - g_k(\xi_k^*) \geq 0$ and $\gamma_k\left(\xia_k \right) - \gamma_k(\xi_k^*) \leq 0$, by optimality.
This implies that,
\[
    0 \leq g_k \left(\xia_k \right) - g_k(\xi_k^*) < \epsilon, \qquad k > k_{l},
\]
which gives us Theorem \ref{thm:costconvergancenolearning}\ref{thm:optimalitygap}.
Part \ref{thm:predictiongap} of Theorem \ref{thm:costconvergancenolearning} follows directly from Lemmas \ref{lemma:gasymptotic} and \ref{lemma:threshold_order}.
\Halmos

\section{Proof of Lemma \ref{lemma:PointEstimateConsistency}}
\label{section:PointEstimateConsistencyproof}
% We establish our results under the assumption of weak consistency of the estimators, which is slightly more challenging than the case of strong consistency.
% The proof for strong convergence follows along a similar approach.
% The following argument addresses the operating component with the true parameter $\theta$.
% We omit $\theta$ for notational simplicity when clear from context.
% In particular, $\Tcal_k(\theta)$ is substituted with $\Tcal_k$, 

\noindent
\textit{Part \ref{lemma:PointEstimateConsistency_theta}.}
By the weak consistency of $\thetaH_\tau$, we have for every $\epsilon, \eta>0$, there exists $\tau_{\epsilon,\eta}^l>0$ such that for all $\tau>\tau_{\epsilon,\eta}^l$
\begin{equation}
    \label{eq:weakConsistencyDef}
    \P \left\{ \Big \Vert\thetaH(\Ib_\tau) - \theta \Big\Vert>\epsilon \right\}<\eta.
\end{equation}
Moreover, by \eqref{eq:Tcal_lim} we have for all $\Tcal^l>0$ that there exists a $k_{\Tcal^l}$ such that $\Tcal_k(\theta)>\Tcal^l$ for all $k>k_{\Tcal^l}$ almost surely.
%, i.e.
%\begin{equation}
%    \label{eq:Tcal_lim_Def}
%    \P \left\{\exists k_{\Tcal^l} \mbox{ s.t. }    \Tcal_k(\theta)>\Tcal^l\quad \forall k>k_{\Tcal^l} \right\}=1,
%\end{equation}
%Note that $k_{\Tcal^l}$ is a random variable.
% for every $\Tcal^l>0$ there exists $k_{\Tcal^l}$ such that for all $k>k_{\Tcal^l}$
% \begin{equation}
%     \label{eq:Tcal_lim_Def}
%     \P \left\{ \Tcal_k(\theta) > \Tcal^l \right\}=1.
% \end{equation}
Now, choose $\Tcal^l>\tau_{\epsilon,\eta}^l$. Then we obtain that for all $k>k_{\Tcal^l}$
\[
    \P \left\{\Big \Vert\thetaH \left(\Ib_{\Tcal_k\left( \theta \right)} \right) - \theta \Big\Vert>\epsilon \Big \vert \Tcal_k \left( \theta \right)\right\} < \eta, \qquad \mbox{almost surely.}
    % \P \left \{\P \left\{\Big \Vert\thetaH_{\Tcal_k\left( \theta \right)} - \theta \Big\Vert>\epsilon \Big \vert \Tcal_k \left( \theta \right)\right\} < \eta \right\}=1.
\]
Using the law of total probability (condition on $\Tcal_k (\theta)$) completes the proof for Part \ref{lemma:PointEstimateConsistency_theta}.

\noindent
\textit{Part \ref{lemma:PointEstimateConsistency_xi}.}
By Assumption \ref{assum:FlatRigion}, we have that for all $\lambda \in \R$, $\theta \in \Theta$, $\max \mathcal{D}_{\lambda,\theta}-\min \mathcal{D}_{\lambda,\theta} \leq \xf$.
Consequently, for all $\lambda \in \R$, $\theta \in \Theta$,
\begin{equation}
    \label{eq:limgap}
    \lim \sup_{\theta^\prime \to \theta} \xi_\lambda (\theta^\prime) - \lim \inf_{\theta^\prime \to \theta} \xi_\lambda (\theta^\prime) \leq \xf.
\end{equation}
For $\epsilon>0$, define $k_{\epsilon}=2 \xf/\epsilon$.
By \eqref{eq:limgap}, for all $\epsilon>0$, there exists a neighborhood of $\theta$, $\Theta_{\theta, \epsilon}:= \left \{\theta^\prime \in \Theta: \Vert \theta^\prime - \theta \Vert < \epsilon/2 \right\}$ such that for all $k> k_\epsilon$ and $\theta^\prime \in \Theta_{\theta, \epsilon}$
\[
    \frac{1}{k}\Big \vert \xia_k (\theta^\prime) - \xia_k (\theta) \Big \vert \leq \frac{\xf}{k}+ \frac{\epsilon}{2k}<\frac{\xf}{k_\epsilon}+\frac{\epsilon}{2}=\epsilon.
\]
Choose $0 < \eta < 1$. Part~\ref{lemma:PointEstimateConsistency_theta} implies that there exists a $k_{\theta,\eta}$ such that $\P\left\{\thetaH \left(\Ib_{\Tcal_k\left( \theta \right)} \right) \in \Theta_{\theta,\epsilon}\right\}>1-\eta$ for each $k>k_{\theta,\eta}$.
Therefore for all $k> \max\{k_\epsilon, k_{\theta,\eta}\}$ we have
\begin{alignat*}{2}
    \label{eq:EstimateConsistency_xi}
    \P \left \{\frac{1}{k}\Big \vert \xipklast(\theta) - \xia_k (\theta) \Big \vert <\epsilon \right \} =  & \P \left \{\frac{1}{k}\Big \vert \E\left[ \xia_k \left(\thetaH \left(\Ib_{\Tcal_k\left( \theta \right)} \right) \right) \right]  - \xia_k (\theta) \Big \vert <\epsilon \right \} \nonumber\\
    \geq  & \P \left \{\frac{1}{k}\Big \vert \E\left[ \xia_k \left(\thetaH \left(\Ib_{\Tcal_k\left( \theta \right)} \right) \right) \right] - \xia_k (\theta) \Big \vert <\epsilon \Big \vert \thetaH \left(\Ib_{\Tcal_k\left( \theta \right)} \right)\in \Theta_{\theta,\epsilon} \right \} \P \left \{ \thetaH \left(\Ib_{\Tcal_k\left( \theta \right)} \right)  \in \Theta_{\theta,\epsilon}\right\} \nonumber\\
    = & \P \left \{ \thetaH \left(\Ib_{\Tcal_k\left( \theta \right)} \right)  \in \Theta_{\theta,\epsilon}\right\} > 1-\eta.
\end{alignat*}
% \begin{alignat}{2}
%     \label{eq:EstimateConsistency_xi}
%     \P \left \{\frac{1}{k}\Big \vert \xiH_{\Tcal_k\left( \theta \right),k} (\theta) - \xiT^*_k (\theta) \Big \vert <\epsilon \right \} =  & \P \left \{\frac{1}{k}\Big \vert \E\left[ \xiT^*_k \left(\thetaH \left(\Ib_{\Tcal_k\left( \theta \right)} \right) \right) \Big \vert \I_{t_0+\Tcal_k(\theta)}\right]  - \xiT^*_k (\theta) \Big \vert <\epsilon \right \} \nonumber\\
%     \geq  & \P \left \{\frac{1}{k}\Big \vert \E\left[ \xiT^*_k \left(\thetaH \left(\Ib_{\Tcal_k\left( \theta \right)} \right) \right) \Big \vert \I_{t_0+\Tcal_k(\theta)}\right] - \xiT^*_k (\theta) \Big \vert <\epsilon \Big \vert \thetaH \left(\Ib_{\Tcal_k\left( \theta \right)} \right)\in \Theta_{\theta,\epsilon} \right \} \P \left \{ \thetaH \left(\Ib_{\Tcal_k\left( \theta \right)} \right)  \in \Theta_{\theta,\epsilon}\right\} \nonumber\\
%     = & \P \left \{ \thetaH \left(\Ib_{\Tcal_k\left( \theta \right)} \right)  \in \Theta_{\theta,\epsilon}\right\} > 1-\eta.
% \end{alignat}
which establishes Part \ref{lemma:PointEstimateConsistency_xi}.

\noindent
\textit{Part \ref{lemma:PointEstimateConsistency_Exi}}.
Observe that for any $k>0$ and $\theta \in \Theta$, $\dfrac{1}{k}\Big \vert \xipklast(\theta) - \xia_k (\theta) \Big \vert$ is upper bounded by $\LT$, and consequently, uniformly integrable.
Thus, by Part \ref{lemma:PointEstimateConsistency_xi} it converges to zero in expectation, i.e., for all $\theta \in \Theta$,
\[
    \lim_{k \to \infty} \E \left [\frac{1}{k\mu(\theta)}\Big \vert \xipklast(\theta) - \xia_k (\theta) \Big \vert \right]=0.
\]
We can extend this relation to
\[
    \lim_{k \to \infty} \E \left [\frac{1}{k\mu(\theta)}\Big \vert \xipklast(\theta) +\E \left[ Y \left( \xipklast(\theta)\right)\right] - \left(\xia_k (\theta) +\E \left[ Y \left( \xia(\theta)\right)\right]\right) \Big \vert \right]=0.
\]
since $\E[Y (\cdot)]$ is bounded by Lorden's inequality.
This implies that,
\begin{alignat}{2}
    \label{eq:EstimateConsistency_Exi}
    &\lim_{k \to \infty}\E \left[ \displaystyle \int_\Theta \frac{1}{k\mu(\yb)}\Big \vert \xipklast(\yb) +\E \left[ Y \left( \xipklast(\yb)\right)\right] - \left(\xia_k (\yb) +\E \left[ Y \left( \xia(\yb)\right)\right]\right) \Big \vert dF_\theta(\yb) \right] = \nonumber\\ 
    &\lim_{k \to \infty} \displaystyle \int_\Theta \E \left[  \frac{1}{k\mu(\yb)}\Big \vert \xipklast(\yb) +\E \left[ Y \left( \xipklast(\yb)\right)\right] - \left(\xia_k (\yb) +\E \left[ Y \left( \xia(\yb)\right)\right]\right) \Big \vert  \right] dF_\theta(\yb) = 0.
\end{alignat}
The first equality in \eqref{eq:EstimateConsistency_Exi} follows from Fubini's theorem, which allows exchanging the order of integration and expectation, and the final inequality by the dominated convergence theorem.
Next, for every $\epsilon>0$ we have 
\begin{alignat*}{2}
    &\P \left\{ \Bigg \vert \displaystyle \int_\Theta \frac{1}{k\mu(\yb)}\xipklast(\yb) +\E \left[ Y \left( \xipklast(\yb)\right)\right] - \left(\xia_k (\yb) +\E \left[ Y \left( \xia(\yb)\right)\right]\right)  dF_\theta(\yb) \Bigg \vert > \epsilon \right\} \leq \\
    &\P \left\{ \displaystyle \int_\Theta \frac{1}{k\mu(\yb)}\Big \vert \xipklast(\yb) +\E \left[ Y \left( \xipklast(\yb)\right)\right] - \left(\xia_k (\yb) +\E \left[ Y \left( \xia(\yb)\right)\right]\right) \Big \vert dF_\theta(\yb) > \epsilon \right\} \leq \\
    &\frac{\E \left[ \displaystyle \int_\Theta \frac{1}{k\mu(\yb)}\Big \vert \xipklast(\yb) +\E \left[ Y \left( \xipklast(\yb)\right)\right] - \left(\xia_k (\yb) +\E \left[ Y \left( \xia(\yb)\right)\right]\right) \Big \vert dF_\theta(\yb) \right]}{\epsilon}.
\end{alignat*}
The first inequality follows from the integral inequality and the second inequality from Markov's inequality.
Combining this with \eqref{eq:EstimateConsistency_Exi} completes the proof for Part \ref{lemma:PointEstimateConsistency_Exi}.

\noindent
\textit{Proof of Part \ref{lemma:PointEstimateConsistency_ell}}.
We prove the result for continuous $F_X$.
The proof for $d-$lattice $F_X$ is analogous, with integration replaced by summation.
By continuity of $F_X$ in $\theta \in \Theta$ and Part \ref{lemma:PointEstimateConsistency_theta}, for any $\epsilon\ ,x>0$
\begin{equation}
    \label{eq:Eell_continuity}
    \lim_{k \to \infty}\P \left \{ \Big \vert \E \left[\ell(X_{1,1}+x,k) \mid \theta_1= \thetaH \left(\Ib_{\Tcal_k\left( \theta \right)} \right)\right] - \E \left[\ell(X_{1,1}+x,k) \mid \theta_1= \theta\right] \Big \vert < \epsilon \right\}=1
\end{equation}
Now by the definition of $r_k$ as in \eqref{eq:rkDef}
\begin{alignat*}{2}
    \frac{1}{k} \Big \vert r_k\left(\xipklast(\theta), \theta \right)- r_k(\xia_k(\theta),\theta) \Big \vert \leq&
    \frac{1}{k\mu(\theta)} \displaystyle\int_0^{\xia_k(\theta)} \Big \vert \E \left[\ell(X_{1,1}+x,k) \mid \theta_1= \thetaH \left(\Ib_{\Tcal_k\left( \theta \right)} \right)\right] - \E \left[\ell(X_{1,1}+x,k) \mid \theta_1= \theta\right] \Big \vert dx +\\
    & \frac{1}{k\mu(\theta)} \Bigg \vert \displaystyle\int_{\xia_k(\theta)}^{\xipklast \left(\theta \right)} \max \left\{ \E \left[\ell(X_{1,1}+x,k) \mid \theta_1= \thetaH \left(\Ib_{\Tcal_k\left( \theta \right)} \right)\right] , \E \left[\ell(X_{1,1}+x,k) \mid \theta_1= \theta\right] \right\} dx \Bigg \vert \\
    \leq &\frac{1}{k\mu(\theta)} \displaystyle\int_0^{\xia_k(\theta)} \Big \vert \E \left[\ell(X_{1,1}+x,k) \mid \theta_1= \thetaH \left(\Ib_{\Tcal_k\left( \theta \right)} \right)\right] - \E \left[\ell(X_{1,1}+x,k) \mid \theta_1= \theta\right] \Big \vert dx +\\
    & \frac{1}{k\mu(\theta)}\ell(\LT) \Big \vert \xipklast \left(\theta \right) - \xia_k(\theta)  \Big \vert.
\end{alignat*}
Combining this result with Part \ref{lemma:PointEstimateConsistency_xi} and \eqref{eq:Eell_continuity} give us, for every $\epsilon>0$
\begin{equation}
    \label{eq:convergence_r}
    \lim_{k \to \infty} \P \left\{ \frac{1}{k} \Big \vert r_k\left(\xipklast(\theta), \theta \right)- r_k \left(\xia_k(\theta),\theta \right) \Big \vert <\epsilon \right\}=1.
\end{equation}
The rest of the proof follows from an analogous argument as Part \ref{lemma:PointEstimateConsistency_Exi} with the result in Part \ref{lemma:PointEstimateConsistency_xi} is replaced by \eqref{eq:convergence_r}, and by noticing that for all $k>0$ and $\theta \in \Theta$
\[
    \frac{1}{k} r_k \left(\xia_k(\theta),\theta \right) \leq  \ell(\LT) \left(\frac{1}{k}+\frac{\LT}{\mu(\theta)} \right) < \infty.
\]
\noindent
\textit{Proof of Part \ref{lemma:PointEstimateConsistency_prob}}.
Let $\xia_{k,0}$ denote the Oracle's asymptotically optimal threshold function when $\ell(x)=0$ for all $x \in \R_+$. Then for $k>0$,
\begin{equation}
    \label{eq:xiT_0}
    \xia_{k,0}(\theta)\geq \xia_k(\theta),
\end{equation}
since by assumption $\ell(x)\geq0$.
Furthermore, let $\left\{\xil_k \right\}_{k>0}$ be a sequence of threshold functions that satisfy $\displaystyle \lim_{k \to \infty} \frac{\xil_k(\theta)}{L(k)} =1$ and $\displaystyle\lim_{k \to \infty} \phi_k \left(\xil_k(\theta) \right)=\infty$, for all $\theta \in \Theta$, when $\ell(x)=0$.
This also implies $\displaystyle \lim_{k \to \infty} \frac{\phi_k \left(\xil_k(\theta) \right)}{k}=0$.
Hence
\begin{equation}
    \label{eq:LinearThresholds}
    \lim_{k \to \infty} \gamma_k \left(\xil_k \right)= \dfrac{\cT_p}{\LT \displaystyle \int_\Theta \dfrac{dF_\theta(\yb)}{\mu(\yb)}}.
\end{equation}
% This result follows from the following observation.
% For $\lambda \in \R$ and $\theta \in \Theta$, let $\xi_{\lambda,0}(\theta)$ be given by Theorem \ref{thm:gamma*}\ref{thm:gamma*_threshold} when $\ell(x)=0$ for all $x \in \R_+$.
% Evidently, $\xi_{\lambda,0}(\theta) \geq \xi_\lambda(\theta)$, since $\ell(x) \geq0$ for all $x$.
We notice, from \eqref{eq:gamma_k}, that for any function $\xi^{(k)} \in \Xi_k$ a natural lower bound for the limit of $\gamma_k(\xi^{(k)})$ as $k \to \infty$ is as follows
\begin{equation*}
    \lim_{k \to \infty} \gamma_k \left(\xi^{(k)} (\theta) \right) \geq \dfrac{\cT_p}{\LT \displaystyle \int_\Theta \dfrac{dF_\theta(\yb)}{\mu(\yb)}}.
\end{equation*}
Combining this result with \eqref{eq:xiT_0} and \eqref{eq:LinearThresholds} gives $\lim_{k \to \infty} \left(\xia_k(\theta)- \xil_k(\theta)\right) \leq 0$.
Consequently, for all $\theta \in \Theta$
\[
    \lim_{k \to \infty} \phi_k \left(\xia_k(\theta) \right)=\infty,
\]
which immediately implies that both sides of Part \ref{lemma:PointEstimateConsistency_prob} converge to zero as $k \to \infty$, almost surely. \Halmos

\section{Proof of Theorem \ref{thm:costconvergancewithlearning}}
% Similar to Appendix \ref{section:PointEstimateConsistencyproof}, we present the proof for a weakly consistent estimator.
We notice that Lemma \ref{lemma:PointEstimateConsistency}\ref{lemma:PointEstimateConsistency_Exi} can be extended to
\begin{multline}
    \label{eq: ECL_weak_convergence}
    \dfrac{1}{k} \displaystyle \int_\Theta \frac{1}{\mu(\yb)} \left(\xipklast(\yb)+ \E \left[ \YT_1 \left(\xipklast (\yb)\right) \Big \vert \theta_1 = \yb, \xipklast (\yb)   \right]\right)dF_\theta(\yb) \xrightarrow{p} \\
    \dfrac{1}{k} \displaystyle \int_\Theta \frac{1}{\mu(\yb)} \left(\xia_k \left(\yb \right) +\E \left[\YT_1 \left(\xia_k \left(\theta_1 \right)  \right) \Big \vert \theta_1 = \yb \right]\right) dF_\theta(\yb).
\end{multline}
by applying Lemma \ref{lemma:elementwise_convergence}\ref{lemma:elementwise_convergence_CycleLength} to both sides of Lemma \ref{lemma:PointEstimateConsistency}\ref{lemma:PointEstimateConsistency_Exi}.
Similarly, we use Parts \ref{lemma:elementwise_convergence_E_Theta} and \ref{lemma:elementwise_convergence_Prob} of Lemma \ref{lemma:elementwise_convergence} to extend Parts \ref{lemma:PointEstimateConsistency_Exi} and \ref{lemma:PointEstimateConsistency_prob} of Lemma \ref{lemma:PointEstimateConsistency}, respectively
\begin{multline}
    \label{eq: opCost_weak_convergence}
    \frac{1}{k}\displaystyle \int_\Theta\E \left[\displaystyle\sum_{\tau=1}^{T \left(\xipklast(\yb) ,\theta_1 \right)} \ell(S_{1,\tau},k) \Bigg\vert \theta_1=\yb, \xipklast(\yb) \right] dF_\theta(\yb) \xrightarrow{p} \\
    \frac{1}{k}\displaystyle \int_\Theta\E \left[\displaystyle\sum_{\tau=1}^{T \left(\xia(\theta_1) ,\theta_1 \right)} \ell(S_{1,\tau},k) \Bigg\vert \theta_1=\yb \right] dF_\theta(\yb),
\end{multline}
\begin{multline}
    \label{eq: failProb_weak_convergence}
    \displaystyle\int_\Theta \P \left\{\YT_1(\xipklast(\yb)) > \phi_k \left(\xipklast(\yb) \right) \Big \vert \theta_1=\yb, \xipklast(\yb) \right\} dF_\theta(\yb) \xrightarrow{p} \\
    \displaystyle\int_\Theta \P \left\{\YT_1(\xia(\theta_1)) > \phi_k \left(\xi^{(k)}(\theta_1) \right) \Big \vert \theta_1=\yb \right\} dF_\theta(\yb).
\end{multline}
Observe that the RHS of \eqref{eq: ECL_weak_convergence}, \eqref{eq: opCost_weak_convergence}, and \eqref{eq: failProb_weak_convergence} are uniformly integrable, which yields the stronger mean convergence results.
This observation together with Fubini's theorem give us
\begin{multline}
    \label{eq: ECL_L1_convergence}
    \dfrac{1}{k} \displaystyle \int_\Theta \frac{1}{\mu(\yb)} \left(\xipklast(\yb)+ \E \left[ \YT_1 \left(\xipklast (\yb)\right) \Big \vert \theta_1 = \yb   \right]\right)dF_\theta(\yb) \to \\
    \dfrac{1}{k} \displaystyle \int_\Theta \frac{1}{\mu(\yb)} \left(\xia_k \left(\yb \right) +\E \left[\YT_1 \left(\xia_k \left(\theta_1 \right)  \right) \Big \vert \theta_1 = \yb \right]\right) dF_\theta(\yb),
\end{multline}
\begin{equation}
    \label{eq: opCost_L1_convergence}
    \frac{1}{k} \displaystyle \int_\Theta\E \left[\displaystyle\sum_{\tau=1}^{T \left(\xipklast(\yb) ,\theta_1 \right)} \ell(S_{1,\tau},k) \Bigg\vert \theta_1=\yb \right] dF_\theta(\yb) \to
    \frac{1}{k}\displaystyle \int_\Theta\E \left[\displaystyle\sum_{\tau=1}^{T \left(\xia(\theta_1) ,\theta_1 \right)} \ell(S_{1,\tau},k) \Bigg\vert \theta_1=\yb \right] dF_\theta(\yb),
\end{equation}
\begin{multline}
    \label{eq: failProb_L1_convergence}
    \displaystyle\int_\Theta \P \left\{\YT_1(\xipklast(\yb)) > \phi_k \left(\xipklast(\yb) \right) \Big \vert \theta_1=\yb\right\} dF_\theta(\yb) \to \\
    \displaystyle\int_\Theta \P \left\{\YT_1(\xia(\theta_1)) > \phi_k \left(\xi^{(k)}(\theta_1) \right) \Big \vert \theta_1=\yb \right\} dF_\theta(\yb).
\end{multline}
Next, by \eqref{eq:GeneralSLLNDef} as $k \to \infty$
\begin{alignat}{2}
    \label{eq:MainConvergenceResult}
    g_k(\xip) = & \frac{c_p + \displaystyle\int_\Theta \left(c_f\P \left\{\YT_1(\xipklast(\yb)) > \phi_k \left(\xipklast(\yb) \right) \Big \vert \theta_1=\yb\right\} + \frac{1}{k} \left[\displaystyle\sum_{\tau=1}^{T \left(\xipklast(\yb) ,\theta_1 \right)} \ell(S_{1,\tau},k) \Bigg\vert \theta_1=\yb \right] \right)
    dF_\theta(\yb)}{\dfrac{1}{k} \displaystyle \int_\Theta \frac{1}{\mu(\yb)} \left(\xipklast(\yb)+ \E \left[ \YT_1 \left(\xipklast (\yb)\right) \mid \theta_1 = \yb   \right]\right)dF_\theta(\yb)} \nonumber\\
    \to & \frac{c_p + \displaystyle\int_\Theta \left( c_f \P \left\{\YT_1(\xia(\theta_1)) > \phi_k \left(\xi^{(k)}(\theta_1) \right) \Big \vert \theta_1=\yb \right\} + \frac{1}{k} \E \left[\displaystyle\sum_{\tau=1}^{T \left(\xia(\theta_1) ,\theta_1 \right)} \ell(S_{1,\tau},k) \Bigg\vert \theta_1=\yb \right] \right)
    dF_\theta(\yb)}{\dfrac{1}{k} \displaystyle \int_\Theta \frac{1}{\mu(\yb)} \left(\xia_k \left(\yb \right) +\E \left[\YT_1 \left(\xia_k \left(\theta_1 \right) \right) \Big \vert \theta_1 = \yb \right]\right) dF_\theta(\yb)}\nonumber\\
    = & g_k(\xia) \to g_k(\xio).
\end{alignat}
The first equality in \eqref{eq:MainConvergenceResult} follows from Kolmogorov's law of large numbers as per \eqref{eq:GeneralSLLNDef} together with the expression of $g_k$, i.e., Equation \eqref{eq:g_k}, the first convergence from Equations \eqref{eq: opCost_L1_convergence} through \eqref{eq: failProb_L1_convergence} and the last convergence from Theorem~\ref{thm:costconvergancenolearning}.
\Halmos

\end{APPENDICES}

\end{document}